\newtheorem{thm}{Theorem}[section]
\newtheorem{prop}[thm]{Proposition}
\newtheorem{conj}[thm]{Conjecture}
\newtheorem{cor}[thm]{Corollary}
\newtheorem{lem}[thm]{Lemma}
\theoremstyle{definition}
\newtheorem{define}[thm]{Definition}
\theoremstyle{remark}
\newtheorem{rem}[thm]{Remark}
\newtheorem{example}[thm]{Example}
\newtheorem{question}[thm]{Question}
\newcommand{\ve}[1]{\boldsymbol{\mathbf{#1}}}
\newcommand{\Z}{\mathbb{Z}}
\newcommand{\N}{\mathbb{N}}
\newcommand{\Q}{\mathbb{Q}}
\renewcommand{\d}{\partial}
\renewcommand{\subset}{\subseteq}
\renewcommand{\tilde}{\widetilde}
\renewcommand{\bar}{\overline}
\newcommand{\iso}{\cong}
\DeclareMathOperator{\Char}{{Char}}
\DeclareMathOperator{\Diff}{{Diff}}
\DeclareMathOperator{\gr}{{gr}}
\DeclareMathOperator{\id}{{id}}
\DeclareMathOperator{\im}{{im}}
\DeclareMathOperator{\MOD}{{Mod}}
\DeclareMathOperator{\Mor}{{Mor}}
\DeclareMathOperator{\Spin}{{Spin}}
\DeclareMathOperator{\Tw}{{Tw}}
\DeclareMathOperator{\Span}{{Span}}
\DeclareMathOperator{\Sym}{{Sym}}
\DeclareMathOperator{\Tors}{{Tors}}
\newcommand{\fg}{\mathit{fg}}
\newcommand{\lk}{\mathrm{lk}}
\newcommand{\bA}{\mathbb{A}}
\newcommand{\bB}{\mathbb{B}}
\newcommand{\bC}{\mathbb{C}}
\newcommand{\bE}{\mathbb{E}}
\newcommand{\bF}{\mathbb{F}}
\newcommand{\bH}{\mathbb{H}}
\newcommand{\bI}{\mathbb{I}}
\newcommand{\bP}{\mathbb{P}}
\newcommand{\bT}{\mathbb{T}}
\newcommand{\bZ}{\mathbb{Z}}
\newcommand{\cA}{\mathcal{A}}
\newcommand{\cC}{\mathcal{C}}
\newcommand{\cD}{\mathcal{D}}
\newcommand{\cF}{\mathcal{F}}
\newcommand{\cH}{\mathcal{H}}
\newcommand{\cK}{\mathcal{K}}
\newcommand{\cL}{\mathcal{L}}
\newcommand{\cT}{\mathcal{T}}
\newcommand{\cX}{\mathcal{X}}
\newcommand{\cY}{\mathcal{Y}}
\newcommand{\cZ}{\mathcal{Z}}
\newcommand{\frA}{\mathfrak{A}}
\newcommand{\fra}{\mathfrak{a}}
\newcommand{\frs}{\mathfrak{s}}
\newcommand{\scA}{\mathscr{A}}
\newcommand{\scC}{\mathscr{C}}
\newcommand{\scD}{\mathscr{D}}
\newcommand{\scH}{\mathscr{H}}
\newcommand{\scU}{\mathscr{U}}
\newcommand{\scV}{\mathscr{V}}
\newcommand{\cCFL}{\mathcal{C\!F\!L}}
\newcommand{\cCFK}{\mathcal{C\hspace{-.5mm}F\hspace{-.3mm}K}}
\newcommand{\CF}{\mathit{CF}}
\newcommand{\HF}{\mathit{HF}}
\newcommand{\xs}{\ve{x}}
\newcommand{\ys}{\ve{y}}
\newcommand{\zs}{\ve{z}}
\newcommand{\ws}{\ve{w}}
\newcommand{\as}{\ve{\alpha}}
\newcommand{\bs}{\ve{\beta}}
\renewcommand{\a}{\alpha}
\renewcommand{\b}{\beta}
\newcommand{\g}{\gamma}
\newcommand{\veps}{\varepsilon}
\newcommand{\bCF}{\mathbb{CF}}
\DeclareMathOperator{\Cone}{{Cone}}
\numberwithin{equation}{section}
\newcommand{\ar}{\mathrm{a.r.}}
\newcommand{\llsquare}{[\hspace{-.5mm}[}
\newcommand{\rrsquare}{]\hspace{-.5mm}]}
\newcommand{\vecotimes}{\mathrel{\vec{\otimes}}}
\newcommand{\hatbox}{\mathrel{\hat{\boxtimes}}}
\title{The equivalence of lattice and Heegaard Floer homology}
\author{Ian Zemke}
\address{Department of Mathematics\\Princeton University\\  Princeton, NJ, USA}
\email{izemke@math.princeton.edu}
\begin{document}
\maketitle

\begin{abstract}
 We prove N\'{e}methi's conjecture: if $Y$ is a 3-manifold which is the boundary of a plumbing of a tree of disk bundles over $S^2$, then the lattice homology of $Y$ coincides with the Heegaard Floer homology of $Y$. We also give a conjectural description of the $\Lambda^* H_1(Y)/\Tors$ action when $b_1(Y)>0$.
\end{abstract}

\section{Introduction}

Heegaard Floer homology is a powerful invariant of 3-manifolds, introduced by Ozsv\'{a}th and Szab\'{o} \cite{OSDisks} \cite{OSProperties}. To a closed 3-manifold $Y$, equipped with a $\Spin^c$ structure $\frs\in \Spin^c(Y)$, Ozsv\'{a}th and Szab\'{o} constructed an $\bF[U]$-module denoted $\HF^-(Y,\frs)$. Here and throughout the paper, we work over $\bF=\Z/2$. One defines
\[
\HF^-(Y)=\bigoplus_{\frs \in \Spin^c(Y)} \HF^-(Y,\frs).
\]

An important class of 3-manifolds are the \emph{plumbed} 3-manifolds.  These are the boundaries of 4-manifolds obtained by plumbing a collection of disk bundles over surfaces together. In this paper, we focus on plumbings of trees of disk bundles over $S^2$. Such a plumbing is encoded by an integrally weighted tree $G$.  Given such a $G$, we write $Y(G)$ for the corresponding plumbed 3-manifold.

 Ozsv\'{a}th and Szab\'{o} \cite{OSPlumbed} computed the Heegaard Floer homology of 3-manifolds obtained by plumbing along a tree $G$ when the corresponding 4-manifold is negative definite and the tree has at most one \emph{bad vertex} (i.e. a vertex such that the weight exceeds minus the valence). This family includes all Seifert fibered spaces. 

The computation of Ozsv\'{a}th and Szab\'{o} was formalized by N\'{e}methi \cite{NemethiAR} \cite{NemethiLattice}, who defined an $\bF[U]$-module $\bH\bF(G)$ called \emph{lattice homology}. Lattice homology is the homology of a combinatorially defined chain complex $\bC \bF(G)$. N\'{e}methi \cite{NemethiLattice} conjectured that lattice homology and Heegaard Floer homology are isomorphic for all negative definite plumbing trees $G$.

Work of N\'{e}methi, Ozsv\'{a}th, Stipsicz, and  Szab\'{o} verifies this conjecture in a number of special cases  \cite{OSPlumbed} \cite{OSSLattice} \cite{NemethiAR} \cite{NemethiLattice}  \cite{NemethiRational}, though the general conjecture has remained heretofore open. Additionally, Ozsv\'{a}th, Stipsicz and Szab\'{o} \cite{OSSLattice} construct a spectral sequence from lattice homology to Heegaard Floer homology.

% N\'{e}methi proved that for the family of \emph{almost rational} graphs, $\HF^-(Y(G))$ and $\bH \bF(Y(G))$ are isomorphic. Ozsv\'{a}th, Stipsicz and Szab\'{o} generalized this to a larger family called  \emph{type-2} graphs, and proved that for general $G$ there is a spectral sequence from lattice homology to Heegaard Floer homology (almost rational graphs are \emph{type-1} graphs in Ozsv\'{a}th--Stipsicz--Szab\'{o}'s terminology). In later work, N\'{e}methi proved that if $G$ is negative definite, then $Y(G)$ is an L-space if and only if $G$ is a rational graph \cite{NemethiRational}. 

%  The conjecture is also open for more general plumbing graphs. The aforementioned works \cite{OSPlumbed} \cite{NemethiAR} \cite{OSSLattice} \cite{NemethiRational} verify the conjecture when $G$ is a type-2 graph and for negative definite plumbings such that $Y(G)$ is an L-space.

In this paper, we prove the conjecture in full generality. To state our result, we write $\ve{\HF}^-(Y(G))$ for the $\bF\llsquare U\rrsquare$ module obtained by completing $\HF^-(Y(G))$ with respect to the $U$ action. Our main result is the following:

\begin{thm}\label{thm:main}
If $G$ is a plumbing tree, then there is an isomorphism of $\bF\llsquare U\rrsquare $-modules
 \[
 \bH\bF(G)\iso \ve{\HF}^-(Y(G)).
 \]
 When $b_1(Y(G))=0$, the isomorphism is relatively graded.
\end{thm}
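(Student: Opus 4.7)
The plan is to reduce both sides to the homology of the same chain complex via a link surgery formula. Since $Y(G)\iso S^3_\Lambda(L_G)$ for an iterated connected sum of Hopf links $L_G$, the completed link surgery formula (Manolescu--Ozsv\'ath, refined in the bordered framework of \cite{ZemBordered}) identifies $\ve{\HF}^-(Y(G))$ with the homology of a hypercube $\cC(L_G,\Lambda)$ whose vertices are indexed by sublinks $M\subset L_G$ (equivalently by subsets of vertices of $G$), whose vertex complexes are completed, truncated $\cCFL^-$ complexes of $L_G$ at lattice points associated to $M$, and whose edge maps come from sublink inclusion and destabilization. The first main step is to build an explicit model for this hypercube in which each vertex complex can be identified with a summand of Némethi's lattice complex $\bC\bF(G)$, and each edge map agrees with Némethi's lattice differential.

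For the vertex complexes, the key observation is that $L_G$ is an iterated connected sum of Hopf links, one for each edge of $G$. Since knot Floer homology is multiplicative under connected sum, a natural model for $\cCFL^-(S^3,L_G)$ is the tensor product, along the edges of $G$, of the Hopf link complexes; this yields a free $\bF[[U_1,\ldots,U_n]]$-module (one $U_v$ per vertex $v$ of $G$) generated by lattice points $\ve{s}\in \bZ^n$, equipped with the standard Alexander filtration. After fixing a $\Spin^c$ structure $\frs$ on $Y(G)$, the affine lattice of $\Spin^c$ structures on the plumbing 4-manifold $X(G)$ that restrict to $\frs$ is canonically identified with the support of this tensor model. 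Under this identification, the inclusion and destabilization maps in the link surgery hypercube become precisely the $U$-weighted maps between adjacent lattice points that Némethi uses to define $\bC\bF(G)$. The hypercube indexing of the link surgery formula thus gets re-indexed by the characteristic cube of the plumbing, yielding a candidate chain isomorphism with $\bC\bF(G)$.

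The main obstacle will be verifying that the higher-length hypercube maps (those associated to multiple edges of the surgery cube at once) either vanish or reduce to the purely combinatorial form prescribed by lattice homology. In the bordered framework of \cite{ZemBordered} these longer maps are computed by counting holomorphic polygons on an explicit Heegaard multi-diagram for the plumbing link; the algebraic rigidity and symmetry of the iterated Hopf link diagram should force these counts to collapse to the expected combinatorial answers, analogous in spirit to how Ozsv\'ath--Szab\'o handled the one-bad-vertex case in \cite{OSPlumbed}, but now without any restriction on bad vertices. For the relative grading statement when $b_1(Y(G))=0$, one tracks the Maslov grading through the link surgery formula and matches it against Némethi's $\delta$-grading on $\bC\bF(G)$; since both are normalized by the same $\Spin^c$ cobordism grading shift coming from $X(G)$, the relative grading match is automatic once the chain isomorphism is established.
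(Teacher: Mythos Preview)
Your outline has the right overall shape---reduce to the link surgery hypercube for $L_G$, use the iterated Hopf link structure, and argue that the higher-length maps collapse---but there are two genuine gaps at exactly the points you flag as ``should work.''

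First, the tensor product model you describe for $\cCFL^-(L_G)$ is \emph{not} the link surgery hypercube on the nose. The connected sum formula in the bordered framework (Theorem~\ref{thm:pairing-main} and Equation~\eqref{eq:alternate-pairing-theorem}) involves the pair-of-pants bimodules $W_l,W_r$, which carry an extra $\delta_5^1$ term beyond the naive merge $M_2$. This extra term factors through a homology action $\cA_{[K_1]}\otimes\id$, and you have to prove it is null-homotopic before you can use the pure tensor model. The paper does this in two different ways: when $b_1(Y(G))=0$, a short topological argument (Lemma~\ref{lem:rational-homology-plumbings}) guarantees that at each connected-sum step at most one summand knot is homologically essential, so Corollary~\ref{cor:simplify-tensor-product} applies; when $b_1(Y(G))>0$ this fails, and one instead invokes a structure result for finitely generated type-$D$ modules over the completed algebra (Proposition~\ref{prop:merge=pair-of-pants}, via the PID classification in Lemma~\ref{lem:classification-PID}). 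Your proposal does not mention this issue at all, and ``multiplicativity of knot Floer homology under connected sum'' is not enough---it is multiplicativity of the surgery hypercube, with all its edge and diagonal maps, that you need.

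Second, your plan for the higher-length maps (``counting holomorphic polygons on an explicit Heegaard multi-diagram\ldots algebraic rigidity and symmetry should force these counts to collapse'') is both vague and not what actually works. The paper's argument is purely algebraic: one uses the specific model $\bar\cH$ of the Hopf link bimodule (Proposition~\ref{prop:Hopf-link-large-model} with the mixed arc system), applies the homological perturbation lemma for hypercubes (Lemma~\ref{lem:homological-perturbation-cubes}), and observes that a ``Hopf filtration'' on the tensor product kills all perturbed diagonals---the homotopies $h_\veps$ raise the filtration, the projection $\pi_\veps$ sees only the bottom level, and in the $\bar\cH$ model the length-$1$ maps preserve the filtration while higher-length maps raise it. The paper explicitly remarks that this argument \emph{fails} for the other model $\cH$, so the choice of bimodule is not incidental. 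Direct polygon counting, by contrast, would be very hard to carry out and is not how the result is obtained.
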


 If $Y(G)$ is a rational homology 3-sphere, no information is lost by taking completions with respect to $U$. The same holds if we restrict to torsion $\Spin^c$ structures on a general $Y(G)$. Furthermore, for all $Y$, the reduced module $\HF_{\mathrm{red}}^-(Y)$ is unchanged by completing.
 
 Our proof of Theorem~\ref{thm:main} uses the link surgery formula of Manlescu and Ozsv\'{a}th \cite{MOIntegerSurgery}. There is a well-known presentation of $Y(G)$ as Dehn surgery on a link $L_G$, and we give a direct computation of the link surgery formula of $L_G$ using the techniques of the author \cite{ZemBordered}.

%This paper builds off previous work of the author \cite{ZemBordered} which develops a bordered theory using the Manolescu-Ozsv\'{a}th link surgery formula \cite{MOIntegerSurgery}. We note that 3-manifolds obtained by plumbing a tree of 2-spheres can be described also by gluing solid tori and cartesion products of $S^1$ with a pair-of-pants surface. In this manner we reduce the proof to local computations allowing cut and paste arguments.

\subsection{Plumbed manifolds with $b_1(Y)>0$}

%We expect the techniques of this paper to also prove that when $b_1(Y(G))>0$ the isomorphism is relatively graded for all torsion $\Spin^c$ structures. The remaining task is to write down a theory of group valued gradings in the spirit of \cite{LOTBordered}*{Section~2.5} for the bordered link surgery modules from \cite{ZemBordered}. We plan to complete this in a future work.

When $b_1(Y)>0$, Heegaard Floer homology  has an action of $\Lambda^*( H_1(Y)/\Tors)$. Our argument in the case that $b_1(Y(G))>0$ is slightly less natural than the argument when $b_1(Y(G))=0$. For example, we are not able to prove that our isomorphism interwines this homology action when $b_1(Y)>0$.

 In this paper, we state a refinement of N\'{e}methi's conjecture when $b_1(Y(G))>0$. If $\g\in H_1(Y(G))/\Tors$, we define an endomorphism $\frA_{\g}$ on the lattice complex, and prove that our formula gives a well-defined action of $\Lambda^* H_1(Y(G))/\Tors$.  See Section~\ref{sec:plumbed-H1-action}. We give a parallel construction on the link surgery formula in Section~\ref{sec:algebraic-action-link-surgery}.
 We make the following conjecture:

\begin{conj}
\label{conj:intro} $\ve{\HF}^-(Y(G))$ is isomorphic to $\mathbb{HF}(G)$ as a module over $\bF\llsquare U\rrsquare \otimes \Lambda^* H_1(Y(G))/\Tors$.
\end{conj}

In subsequent work \cite{ZemGenSurgery}*{Section~11}, we give an explicit description of the $\Lambda^* H_1(Y(G))/\Tors$ action in terms of the link surgery formula for $L_G$. Since the techniques of \cite{ZemBordered} give a combinatorial model of the surgery formula for $L_G$ which is compatible with this description of the homology action, this reduces Conjecture~\ref{conj:intro} to a purely algebraic question.  Nonetheless, the algebraic arguments of this paper seem insufficient to show that it coincides with the action we describe on lattice homology. 

\subsection{History and overview of the proof}

We now sketch the main ideas that go into the proof, and compare our approach to past approaches. 

The first article directed at computing the Heegaard Floer homology of plumbings is due to Ozsv\'{a}th and Szab\'o \cite{OSPlumbed}. They focused on negative definite plumbings which have at most one \emph{bad vertex} (a bad vertex is a vertex where the weight exceeds minus the valence). 

A key observation of Ozsv\'{a}th and Szab\'{o} was that if $G$ is negative definite, then the 2-handle cobordism $X(G)$ from $S^3$ to $Y(G)$ induces a non-zero map
\[
F_{X(G),\frs} \colon \HF^-(S^3)\to \HF^-(Y(G))
\]
for all $\frs\in \Spin^c(Y(G))$. We may identify $\Spin^c(X(G))$ with the set of characteristic vectors $\Char(X(G))\subset H^2(X(G))$, via $\frs\mapsto c_1(\frs)$. Therefore, for each characteristic vector $K$, we obtain an element
\[
[K]\in \HF^-(Y(G))
\]
 defined by $[K]=F_{X(G),\frs}(1)$ for $c_1(\frs)=K$.

Ozsv\'{a}th and Szab\'{o} \cite{OSPlumbed} define a module
\[
\bH^-(G):=\frac{\bF[U]\otimes \Char(X(G))}{N},
\] where $N$ is an $\bF[U]$-submodule, as follows. For each vertex $v$ in $G$, there is an induced element in $H_2(X_G)$, which is represented by a sphere.  Let
\[
2n=K(v)+v^2.
\]
We let $N$ be the submodule spanned by the elements
\[
\begin{cases}
U^n[K+2v]-[K] & \text{ if } n\ge 0\\
[K+2v]- U^{-n} [K]& \text{ if } n<0,
\end{cases}
\]
for all $K$ and $v$.

 There is a natural map 
 \[
 \Gamma\colon \bH^-(G)\to \HF^-(Y(G))
 \]
 which sends $[K]$ to $F_{X(G), \frs}(1)$ where $c_1(\frs)=K$. Ozsv\'{a}th and Szab\'{o} show this map to be an isomorphism when all weights on $G$ are sufficiently negative. They use the surgery exact triangle and five-lemma to show that the map remains an isomorphism for graphs with at most one bad vertex.

N\'{e}methi \cite{NemethiAR} \cite{NemethiAR} extended this approach, and constructed a chain complex $\bCF(G)$, called the lattice complex. N\'{e}methi focused on a cohomological version of lattice homology, and his definition was most naturally stated for negative definite manifolds. Ozsv\'{a}th, Stipsicz and Szab\'{o} \cite{OSSLattice} \cite{OSSKnotLatticeHomology} gave a homological definition of lattice complex which is defined for arbitrary plumbing trees (e.g. indefinite plumbings). In this paper, we focus on the description given by Ozsv\'{a}th, Stipsicz and Szab\'{o} since it is slightly more convenient for our argument. 

When $Y(G)$ is a rational homology sphere, $\bCF(G)$ has a homological (or Maslov) grading parallel to the Heegaard Floer homology groups. There is an additional grading, called the \emph{lattice grading} or \emph{cube grading}, which the differential decreases by 1.  The homology of $\bCF(G)$ in lattice grading 0 coincides with the module $\bH^-(G)$ defined earlier by Ozsv\'{a}th and Szab\'o.

 N\'{e}methi gave many conceptual insights into the lattice complex, especially in terms of its connection to complex surface singularities and algebraic geometry. N\'{e}methi \cite{NemethiRational} showed that if $G$ is negative definite, then $G$ is \emph{rational} (cf.  \cite{Artin-Rational-1} \cite{Artin-Rational-2})  if and only if $Y(G)$ is an $L$-space. N\'{e}methi \cite{NemethiAR} also showed that Ozsv\'{a}th and Szab\'{o}'s one-bad-vertex condition could be generalized to a condition called \emph{almost rationality}, which has a more natural interpretation in terms of algebraic geometry. A negative definite graph $G$ is defined to be \emph{almost rational} if it becomes rational after decreasing the weight at one vertex. 

One challenge of generalizing the isomorphism of Ozsv\'{a}th, Szab\'{o} and N\'{e}methi in the almost rational case is that it is rather challenging to define a natural map from $\mathbb{HF}(G)$ to $\ve{\HF}^-(Y(G))$ for general $G$. One natural approach to constructing such an isomorphism would be to define a canonical chain map 
\[
\Gamma\colon \mathbb{CF}(G)\to \ve{\CF}^-(Y(G))
\]
 which commutes with the maps in certain surgery exact triangles. Assuming $\Gamma$ to be an isomorphism for graphs with sufficiently negative framings, one could hope to use the five-lemma to prove the isomorphism for all $G$. It turns out to be quite challenging to define a map $\Gamma$ as above for general plumbing trees $G$. If such a map existed, one would expect that the component of $\Gamma$ from $\mathbb{CF}_q(G)$ to $\ve{\CF}^-(Y(G))$ to count holomorphic $(3+q)$-gons. Here $q$ denotes the lattice grading. Such a construction has not been achieved directly.

Our approach uses the link surgery formula of Manolescu and Ozsv\'{a}th \cite{MOIntegerSurgery}. To a link $L\subset S^3$ with integral framing $\Lambda$, Manolescu and Ozsv\'{a}th define a chain complex $\cC_{\Lambda}(L_G)$ whose homology is $\ve{\HF}^-(Y(G))$. 

The manifold $Y(G)$ has a convenient surgery description as integral surgery on a link $L_G\subset S^3$. The link $L_G$ has one unknotted component for each vertex of $G$, and a clasp for each edge of $G$. The weights give an integral framing $\Lambda$ on $L_G$, and $Y(G)\iso S^3_{\Lambda}(L_G)$.

A key observation of Ozsv\'{a}th, Stipsicz and Szab\'{o} \cite{OSSLattice}*{Proposition~4.4} is that the lattice complex $\mathbb{CF}(G)$ is closely related to the link surgery complex $\cC_{\Lambda}(L_G)$. By construction, $\cC_{\Lambda}(L_G)$ is a hypercube of chain complexes. This means that the underlying module of $\cC_{\Lambda}(L_G)$ is a direct sum of subspaces $\cC_{\veps}$, indexed by points $\veps\in \{0,1\}^n$, and the differential on $\cC_{\Lambda}(L_G)$ decomposes as a sum $D=\sum_{\veps\le \veps'} D_{\veps,\veps'}$ where $D_{\veps,\veps'}$ maps $\cC_{\veps}$ to $\cC_{\veps'}$. We refer to the quantity $n-|\veps|_{L^1}$ as the \emph{cube filtration}. We may write differential $D$ on $\cC_{\Lambda}(L_G)$ as a sum $D=D_0+D_1+\cdots+D_n$ where $D_i$ decreases cube filtration by $i$. Ozsv\'{a}th, Stipsicz and Szab\'{o} show that there is an isomorphism of chain complexes
 \[
\mathbb{CF}(G)\iso  \left(\bigoplus_{\veps\in \{0,1\}^n} H_*(\cC_{\veps},D_0),(D_1)_*\right),
 \]
  where $(D_1)_*$ is the map induced by $D_1$ on after taking homology with respect to $D_0$. In particular, the $E_2$ page of the spectral sequence from the cube filtration on $\cC_{\Lambda}(L_G)$ is lattice homology, so one implication of Ozsv\'{a}th, Stipsicz and Szab\'{o}'s work is the existence of a spectral sequence from lattice homology to Heegaard Floer homology.
 
 Our strategy is to carefully study the link surgery complex $\cC_{\Lambda}(L_G)$, and to construct directly an isomorphism between $\cC_{\Lambda}(L_G)$ and $\mathbb{CF}(G)$. We use a number of technical results proven in \cite{ZemBordered}. Two key results proven therein are the following:
 \begin{enumerate}
 \item A connected sum formula for Manolescu and Ozsv\'{a}th's link surgery formula.
 \item A computation of the link surgery complex of the Hopf link.
 \end{enumerate} 
  Since the link $L_G$ is an iterated connected sum of Hopf links, the above two results yield a combinatorial model for $\cC_{\Lambda}(L_G)$. In the present paper, we use homological perturbation theory to show that $\cC_{\Lambda}(L_G)$ is homotopy equivalent to a complex obtained by adding extra (higher length) differentials to the lattice complex. We then show that these higher length differentials may be homotoped away, so that the conjectured isomorphism is obtained. 

There are two main challenges we face in our reduction of higher length arrows:
\begin{enumerate}
\item \emph{Selection of a suitable arc system for the Hopf link complex}. It was observed in \cite{ZemBordered} that the link surgery formula depends on a choice of auxiliary data, called an \emph{arc system}. This is parallel to the traditional bordered Floer setting \cite{LOTBordered}, where a bordered Heegaard diagram has some closed alpha and beta circles, as well as either alpha or beta \emph{arcs} which are asymptotic to the boundary of the Heegaard surface. Using alpha arcs instead of beta arcs (or vice versa) can change the resulting bordered module. The analogous phenomenon in the setting of the link surgery complex is analyzed in \cite{ZemBordered}*{Sections~14, 16}. One choice of arc system for the Hopf link adds comparably fewer higher length differentials to $\cC_{\Lambda}(L_G)$, and we use this one in our proof.
\item \emph{Simplification of the connected sum formula}. Similar to connected sum formulas in other Heegaard Floer contexts (see e.g. \cite{ZemConnectedSums}), our connected sum formula for the link surgery formula has some extra terms involving analogs of the basepoint actions which appear on knot Floer homology. In general, these terms will add higher length arrows to the link surgery complex of connected sums. In this paper, we describe several techniques to eliminate these terms in certain settings. See Corollary~\ref{cor:simplify-tensor-product} and Proposition~\ref{prop:merge=pair-of-pants}. 
\end{enumerate}

Our perspective and strategy is largely influenced by the bordered Floer homology theory of Lipshitz, Ozsv\'{a}th and Thurston \cite{LOTBordered}. Indeed, in \cite{ZemBordered} we show that the link surgery formula of Manolescu and Ozsv\'{a}th is naturally reinterpreted as a theory for 3-manifolds with parametrized torus boundary components. There seems to be a close connection between the bordered theory from \cite{ZemBordered} and the bordered theory of Lipshitz, Ozsv\'{a}th and Thurston. Many of the arguments we use in this paper were inspired by parallel developments in the traditional bordered setting, which we translated into the setting of the link surgery formula. 

\subsection{Further questions}

In this section, we list some important remaining questions about lattice homology.

\begin{question}Is there a natural extension of lattice homology which computes the involutive Floer theory of Hendricks and Manolescu \cite{HMInvolutive}?
\end{question}
\begin{rem} In the almost rational case, the involution turns out to be the natural $J$ map \cite{dai-manolescu}. See \cite{NemethiLattice}*{Remark 3.2.7} \cite{OSSKnotLatticeHomology}*{Section 2.2}  for the definition of $J$. The expectation of the community seems to be that $J$ is unlikely to be the correct involution in general. One justification is that $J^2=\id$ on the chain level. This relation holds even for the associated versions of knot lattice homology \cite{OSSKnotLatticeHomology}, where the knot involution $\iota_K$ of Hendricks and Manolescu \cite{HMInvolutive}*{Section~6.1} does not in general square to the identity on the chain level, even up to chain homotopy. A more reasonable conjecture might be that $J$ coincides with the involution on the level of homology, but not always on the chain level.
\end{rem}

\begin{question}
Is there a more intrinsic or topological interpretation of the lattice grading?
\end{question}
\begin{rem}
The existence of the extra lattice grading has some non-trivial applications. For example in \cite{BLZLatticeLink}, we prove the equivalence of link lattice homology with link Floer homology for plumbed links in rational homology 3-spheres. The extra lattice grading allows us to prove that the link Floer complexes of plumbed $L$-space links (a family including all algebraic links) are formal.
\end{rem}

\begin{question}
Is there a natural extension of lattice homology which computes $\ve{\HF}^-(Y(G))$ for more general plumbings of disk bundles over surfaces, including self-plumbings and higher genus surfaces?
\end{question}

\subsection{Acknowledgments}

The author would like to thank Antonio Alfieri, Maciej Borodzik, Kristen Hendricks, Jennifer Hom, Robert Lipshitz, Beibei Liu, Ciprian Manolescu, Peter Ozsv\'{a}th and Matt Stoffregen  for helpful conversations. We also thank the anonymous referees for their careful reading and thoughtful comments.

\section{Background}

\subsection{Heegaard Floer homology}
\label{sec:background}

In this section, we recall some background on Heegaard Floer homology and its refinements for knots and links. 

Heegaard Floer homology is an invariant of closed 3-manifolds defined by Ozsv\'{a}th and Szab\'{o} \cite{OSDisks} \cite{OSProperties}. Given a pointed Heegaard diagram $(\Sigma,\as,\bs,w)$ for $Y$, one considers the Lagrangian tori 
 \[
 \bT_{\a}=\a_1\times \cdots \times\a_g\quad \text{and} \quad \bT_{\b}=\b_1\times \cdots \times\b_g,
 \]
 inside of $\Sym^g(\Sigma)$. The chain complex $\CF^-(Y,\frs)$ is freely generated over $\bF[U]$ by intersection points $\xs\in \bT_{\a}\cap \bT_{\b}$
satisfying $\frs_w(\xs)=\frs$. The differential counts index 1 pseudoholomorphic disks $u$ weighted by $U^{n_w(u)}$, where $n_w(u)$ is the intersection number of the image of $u$ with $\{w\}\times \Sym^{g-1}(\Sigma)$.

We now recall the construction of knot and link Floer homology. Knot Floer homology is due to Ozsv\'{a}th and Szab\'{o} \cite{OSKnots} and independently Rasmussen \cite{RasmussenKnots}. Link Floer homology is due to Ozsv\'{a}th and Szab\'{o} \cite{OSLinks}. We focus on the description in terms of a free chain complex over a 2-variable polynomial ring $\bF[\scU,\scV]$. Given a doubly pointed Heegaard diagram $(\Sigma,\as,\bs,w,z)$ representing a pair $(Y,K)$ consisting of a knot $K$ in a rational homology 3-sphere $Y$, one defines $\cCFK(Y,K)$ to be the free $\bF[\scU,\scV]$-module generated by intersection points $\bT_{\a}\cap \bT_{\b}$. The differential counts Maslov index 1 pseudo-holomorphic disks $u$ which are weighted by $\scU^{n_{w}(u)}\scV^{n_{z}(u)}$. 

For an $\ell$-component link $L\subset Y$, we consider a $2\ell$-pointed Heegaard link diagram $(\Sigma,\as,\bs,\ws,\zs)$. Here,
\[
\ws=\{w_1,\dots, w_\ell\}\quad \text{and} \quad \zs=\{z_1,\dots, z_\ell\}
\] are collections of basepoints on $L$. If we write $K_1,\dots, K_\ell$ for the components of $L$, then we assume that $w_i,z_i\in K_i$. 

 We define $\cCFL(Y,L)$ to be the complex freely generated over the ring $\bF[\scU_1,\scV_1,\dots, \scU_{\ell}, \scV_{\ell}]$ by intersection points $\xs\in \bT_{\a}\cap\bT_{\b}$. This version of link Floer homology was considered in \cite{ZemCFLTQFT}. In the differential, a holomorphic disk $u$ is weighted by the algebra element 
 \[
 \scU_1^{n_{w_1}(u)}\cdots \scU_\ell^{n_{w_\ell}(u)} \scV_1^{n_{z_1}(u)}\cdots \scV_\ell^{n_{z_\ell}(u)}.
 \] 

We recall that when $L$ is a link in a rational homology 3-sphere $Y$, there is an $\ell$-component $\Q^\ell$-valued Alexander grading $A=(A_1,\dots, A_\ell)$ on $\cCFL(Y,L)$. The most important case for our purposes is when $Y=S^3$. In this case the Alexander grading takes values in the set
\[
\bH(L):=\prod_{i=1}^\ell \Z+\frac{\lk(K_i,L-K_i)}{2}.
\]
 The variable $\scU_i$ has $A_j$-grading $-\delta_{i,j}$ (the Kronecker delta) and $\scV_i$ has $A_j$-grading $\delta_{i,j}$.

There are additional \emph{basepoint actions} on link Floer homology which make an appearance in our paper. They appear frequently when studying knot and link Floer homology (see, e.g.  \cite{SarkarMaslov} or \cite{ZemQuasi}). For each $i\in \{1,\dots, \ell\}$, there are endomorphisms $\Phi_{w_i}$ and $\Psi_{z_i}$ of $\cCFL(Y,L)$, defined as follows. We pick a free $\bF[\scU_1,\scV_1,\dots, \scU_\ell,\scV_{\ell}]$-basis of $\cCFL(Y,L)$. We write $\d$ as a matrix with $\bF[\scU_1,\scV_1,\dots, \scU_\ell,\scV_{\ell}]$ coefficients with respect to this basis. The map $\Phi_{w_i}$ is obtained by formally differentiating this matrix with respect to $\scU_i$. The map $\Psi_{z_i}$ is obtained by formally differentiating this matrix with respect to $\scV_i$.

\subsection{Hypercubes and hyperboxes}

In this section we recall Manolescu and Ozsv\'{a}th's notion of a hypercube of chain complexes, as well as versions in the Fukaya category. See \cite{MOIntegerSurgery}*{Section~5 and 8}.
We write $\bE_n=\{0,1\}^n$. If $\ve{d}=(d_1,\dots, d_n)$ is a tuple with $d_i>0$, we write $\bE(\ve{d})=\{0,\dots, d_1\}\times \cdots \times \{0,\dots, d_n\}$.
We write $\veps\le \veps'$ if inequality holds at all coordinates. We write $\veps<\veps'$ if $\veps\le \veps'$ and $\veps\neq \veps'$. Throughout this paper, we work over $\Z/2$.

\begin{define} A \emph{hypercube of chain complexes} consists of a collection of $\Z/2$-modules $C_\veps$, ranging over all $\veps\in \bE_n$, and maps $D_{\veps,\veps'}\colon C_{\veps}\to C_{\veps'}$, ranging over all pairs $\veps,\veps'$ such that $\veps\le \veps'$. We assume furthermore that whenever $\veps\le \veps''$ we have
\begin{equation}
\sum_{\substack{\veps'\in \bE_n \\\veps\le \veps'\le \veps''}} D_{\veps',\veps''}\circ D_{\veps,\veps'}=0.
\label{eq:hypercube-relations}
\end{equation}
\end{define}

A \emph{hyperbox of chain complexes of size $\ve{d}\in (\Z^{>0})^n$} is similar: We assume that we have a collection of groups $C_{\veps}$ ranging over $\veps\in \bE(\ve{d})$, as well as a collection of linear maps $D_{\veps,\veps'}$ ranging over all $\veps\le \veps'$ such that $|\veps'-\veps|_{L^\infty}\le 1$. We assume that Equation~\eqref{eq:hypercube-relations} holds whenever $|\veps''-\veps|_{L^\infty}\le 1$.

For our purposes, it is also important to consider a notion of hypercubes in the Fukaya category (see \cite{MOIntegerSurgery}*{Section~8.2}): 

\begin{define}
\label{def:hypercube-betas}
 A \emph{hypercube of beta-attaching curves} $\cL_{\b}$ on $(\Sigma,\ws)$ consists of a collection of attaching curves $\bs_{\veps}$ ranging over $\veps\in \bE_n$, as well as a collection of chains (i.e. morphisms in the Fukaya category)
\[
\Theta_{\veps,\veps'}\in \ve{\CF}^-(\Sigma,\bs_{\veps},\bs_{\veps'})
\]
whenever $\veps<\veps'$, satisfying the following compatibility condition for each pair $\veps<\veps'$:
\[
\sum_{\veps=\veps_1<\dots<\veps_j=\veps'} f_{\b_{\veps_1},\dots, \b_{\veps_j}}(\Theta_{\veps_1,\veps_2},\dots, \Theta_{\veps_{j-1},\veps_j})=0.
\]
\end{define}
We usually assume that the diagram containing all $2^n$ attaching curves is weakly admissible.
 For the purposes of this paper, it is also sufficient to consider only hypercubes where each pair $\bs_{\veps}$ and $\bs_{\veps'}$ are handleslide-equivalent on a multi-pointed Heegaard surface $(\Sigma,\ws)$.

The Floer complex $\ve{\CF}^-(\Sigma,\bs_{\veps},\bs_{\veps'})$ appearing in Definition~\eqref{def:hypercube-betas} is freely generated over $\bF\llsquare U_1,\dots, U_n\rrsquare$ by intersection points $\xs\in \bT_{\b_{\veps}}\cap \bT_{\b_{\veps'}}$. The map $f_{\b_{\veps_1},\dots, \b_{\veps_j}}$ appearing in Definition~\ref{def:hypercube-betas} counts pseudoholomorphic polygons $u$ of index $3-j$ which are weighted by $U_1^{n_{w_1}(u)}\cdots U_n^{n_{w_n}(u)}$.

A hypercube of \emph{alpha}-attaching curves $\cL_{\a}=(\as_{\veps},\Theta_{\veps',\veps})_{\veps\in \bE_{n}}$ is defined similarly to Definition~\ref{def:hypercube-betas}, except that we have chains $\Theta_{\veps',\veps}\in \ve{\CF}^-(\Sigma,\as_{\veps'},\as_{\veps})$ whenever $\veps<\veps'$.

Given hypercubes of alpha and beta attaching curves $\cL_{\a}$ and $\cL_{\b}$, we may pair them and form a chain complex $\ve{\CF}^-(\cL_{\a},\cL_{\b})$, which is a hypercube of chain complexes of dimension $n+m$, where $n=\dim \cL_\a$ and $m=\dim \cL_{\b}$. The underlying chain complex of the pairing at cube point $(\veps,\nu)\in \bE_n\times \bE_m$ is $\ve{\CF}^-(\as_{\veps}, \bs_{\nu})$. The hypercube differential is defined as follows. We set $D_{(\veps,\nu),(\veps,\nu)}$ to be the ordinary Floer differential. If $(\veps,\nu)< (\veps',\nu')$, then
\begin{equation}
\begin{split}
&D_{(\veps,\nu),(\veps',\nu')}(\xs)\\
=&\sum_{\substack{\veps=\veps_1<\dots<\veps_i=\veps'\\ \nu=\nu_1<\cdots<\nu_j=\nu'}} f_{\a_{\veps_i},\dots, \a_{\veps_1}, \b_{\nu_1},\dots, \b_{\nu_j}}(\Theta_{\a_{\veps_i},\a_{\veps_{i-1}}},\dots,\xs,\dots, \Theta_{\b_{\nu_{j-1}},\b_{\nu_j}}).
\end{split}
\label{eq:hypercube-differential}
\end{equation}

\subsection{Background on lattice homology}

In this section, we recall the definition of lattice homology. Lattice homology is an invariant of plumbed 3-manifolds due to N\'{e}methi \cite{NemethiAR} \cite{NemethiLattice}, which is a formalization of Ozsv\'{a}th and Szab\'{o}'s computation of the Heegaard Floer homology of some plumbed 3-manifolds \cite{OSPlumbed}.  We will use the notation of  Ozsv\'{a}th, Stipsicz and Szab\'{o} \cite{OSSLattice} because of its relation to the surgery formula of Manolescu and Ozsv\'{a}th. 
 
 If $G$ is a plumbing tree, then we write $Y(G)$ for the associated 3-manifold and $X(G)$ for the associated 4-manifold, which has boundary $Y(G)$. We recall that $K\in H^2(X(G))$ is a \emph{characteristic vector} if
 \[
 K (\Sigma)+\Sigma^2\equiv 0\pmod 2,
 \]
 for every class $\Sigma\in H_2(X(G))$. We write $\Char(G)$ for the set of characteristic vectors of $X(G)$.

We now sketch the definition of $\mathbb{CF}(G)$. Write $V(G)$ for the set of vertices, and $\bP(G)$ for the power set of $V(G)$. Generators of the complex are written $[K,E]$ where $K\in \Char(X(G))$ and $E\in \bP(G)$. The lattice complex is defined as
 \[
 \bCF(G)=\prod_{\substack{
 K\in \Char(G)\\
 E\in \bP(V)}} \bF\llsquare U\rrsquare \otimes \langle [K,E]\rangle.
 \]
 The differential on $\bCF(G)$ is defined via the equation
 \begin{equation}
 \d[K,E]=\sum_{v\in E} U^{a_v(K,E)}\otimes [K,E-v]+\sum_{v\in E} U^{b_v(K,E)}\otimes [K+2v^*,E-v], \label{eq:lattice-differential}
 \end{equation}
 extended equivariantly over $U$. In the above equation, $a_v(K,E)$ and $b_v(K,E)$ denote certain nonnegative integers. See \cite{OSSKnotLatticeHomology}*{Section~2} for the definition in our present notation.

\subsection{Knot and link surgery formulas}
\label{sec:background-surgery-formulas}

We now recall some basics of the Manolescu--Ozsv\'{a}th link surgery formula \cite{MOIntegerSurgery}, as well as the knot surgery formulas of Ozsv\'{a}th and Szab\'{o} \cite{OSIntegerSurgeries} \cite{OSRationalSurgeries}. Additional perspectives and background on the surgery formulas may be found in \cite{ZemGenSurgery}. 

If $K$ is a knot in an integer homology 3-sphere $Y$, then Ozsv\'{a}th and Szab\'{o} \cite{OSIntegerSurgeries} proved a formula which relates $\HF^-(Y_n(K))$ with the knot Floer complex of $K$. They defined two chain complexes $\bA(Y,K)$ and $\bB(Y,K)$ over $\bF\llsquare U\rrsquare $, as well as two $\bF\llsquare U\rrsquare $-equivariant maps $v,h_n\colon \bA(Y,K)\to \bB(Y,K)$ such that
\[
\ve{\HF}^-(Y_n(K))\iso H_* \Cone(v+h_n\colon \bA(Y,K)\to \bB(Y,K)).
\]
Here, $\ve{\HF}^-$ denotes $\HF^-$ with coefficients in $\bF\llsquare U\rrsquare$ and the modules $\bA(Y,K)$ and $\bB(Y,K)$ are suitable completions of $\cCFK(Y,K)$ and $\scV^{-1} \cCFK(Y,K)$, respectively.

We now describe the maps $v$ and $h_n$.  The map $v$ is the canonical inclusion map for localization at $\scV$. The map $h_n$ is defined by composing the canonical inclusion map of $\cCFK(Y,K)$ into $\scU^{-1} \cCFK(Y,K)$, with a homotopy equivalence of $\bF[U]$-chain complexes 
\[
\scU^{-1} \cCFK(Y,K)\simeq \scV^{-1}\cCFK(Y,K),
\]
which we define presently.
There are two canonical isomorphisms 
 \[
\begin{split}
 \theta_z&\colon \scU^{-1} \cCFK(Y,K)\to \CF^-(Y,z)\otimes \bF[T,T^{-1}]\\
\theta_w& \colon \scV^{-1} \cCFK(Y,K)\to \CF^-(Y,w)\otimes \bF[T,T^{-1}],
\end{split} 
\] which are $\bF[U]$-equivariant and preserve the Alexander grading. Here $T$ denotes a formal variable. We put $\CF^-(Y,z)$ in Alexander grading $0$ and define $A(T)=1$. There is a diffeomorphism map $\lambda_*\colon \CF^-(Y,z)\to \CF^-(Y,w)$ obtained by pushing $z$ to $w$ along an arc on the Heegaard surface. The map $h_n$ is defined as the composition
 \[
 h_n= \scV^n \cdot \theta_w^{-1} \circ (\lambda_*\otimes \id_{\bF[T,T^{-1}]})\circ \theta_z.
 \] 
 When $Y$ is a rational homology 3-sphere, the choice of arc $\lambda$ does not change the homotopy class of the map $h_n$ by \cite{ZemGraphTQFT}*{Theorem~D}. For general $Y$, the choice of arc is important, and we pick $\lambda$ to be one of the two components of $K\setminus \{w,z\}$. 

Note that the map $h_n$ shifts the Alexander grading by $n$, while $v$ preserves the Alexander grading.

Manolescu and Ozsv\'{a}th extended the knot surgery formula to links in $S^3$ \cite{MOIntegerSurgery}. To a link $L\subset S^3$ with integral framing $\Lambda$, they constructed a chain complex $\cC_{\Lambda}(L)$ whose homology is $\ve{\HF}^-(S^3_{\Lambda}(L))$. The chain complex $\cC_{\Lambda}(L)$ is built from the link Floer complex of $L$. The chain complex $\cC_{\Lambda}(L)$ is an $\ell$-dimensional hypercube of chain complexes, where $\ell=|L|$. 

The construction of the link surgery formula also requires a choice of arcs on the Heegaard surface, which we are presently suppressing from the notation. See Sections~\ref{sec:system-of-arcs} and~\ref{sec:basic-systems} for details on this. Unlike in the surgery formula for knots in homology spheres, the choice of arcs can change the complex, even for links in $S^3$.

We usually conflate the integral framing $\Lambda$ with the symmetric framing matrix, which has $\Lambda_{i,i}$ equal to the framing of $L_i$, and $\Lambda_{i,j}=\lk(L_i,L_j)$ if $i\neq j$.

 The underlying group of $\cC_{\Lambda}(L)$ has a simple description in terms of link Floer homology. If $\veps\in \bE_\ell$, write $S_{\veps}$ for the multiplicatively closed subset of $\bF[\scU_1,\scV_1,\dots, \scU_{\ell}, \scV_\ell]$ generated by $\scV_i$ for $i$ such that $\veps_i=1$. Then $\cC_{\veps}$ is a completion of  $S_\veps^{-1}\cdot \cCFL(L).$
(This is a slight reformulation of Manolescu and Ozsv\'{a}th's original description; see \cite{ZemBordered}*{Lemma~7.5}).

The hypercube differential decomposes over sublinks of $L$ which are oriented (possibly differently than $L$). If $\vec{M}$ is an oriented sublink of $L$, we write $\Phi^{\vec{M}}$ for the corresponding summand of the differential. If $\veps< \veps'$ and $\veps'-\veps$ is the indicator function for the components of $M$, then $\Phi^{\vec{M}}$ sends $\cC_{\veps}$ to $\cC_{\veps'}$. 

For each $\veps$, the group $\cC_{\veps}\iso  S_{\veps}^{-1}\cdot \cCFL(L)$ decomposes over Alexander gradings $\ve{s}\in \bH(L)$. We write $\cC_{\veps}(\ve{s})\subset S_{\veps}^{-1}\cdot \cCFL(L)$ for this subgroup. The group $\cC_{\veps}(\ve{s})$ is preserved by the action of $U_i=\scU_i\scV_i$, for $i\in \{1,\dots, \ell\}$, as well as the internal differential from $\cCFL(L)$, though it is not preserved by the actions of $\scU_i$ or $\scV_i$.

The hypercube maps $\Phi^{\vec{M}}$ have a predictable effect on Alexander gradings. If $\vec{M}\subset L$ is an oriented sublink, define $\Lambda_{L,\vec{M}}\in \Z^\ell$ as follows. We may canonically identify $\Z^\ell$ with $H_1(S^3\setminus \nu(L))$. Under this isomorphism, the element $(0,\dots, 1,\dots 0)\in\Z^\ell$ is identified with a meridian of a component of $L$. We define $\Lambda_{L,\vec{M}}$ to be the sum of the longitudes of link components $K_i$ in $\vec{M}$ such that the orientations from $\vec{M}$ and $L$ are opposite. With respect to this notation, 
\begin{equation}
\Phi^{\vec{M}}(\cC_{\veps}(\ve{s}))\subset \cC_{\veps'}(\ve{s}+\Lambda_{L,\vec{M}})\label{eq:grading-link-surgery}
\end{equation}
where $\veps'-\veps$ is the indicator function for $M$.

Ozsv\'{a}th, Stipsicz and Szab\'{o} describe an important relation between the link surgery and lattice complexes which we use in our proof.  Let $(\cC_{\veps},D_{\veps,\veps'})_{\veps\in \bE_n}$ denote the link surgery hypercube $\cC_{\Lambda}(L_G)$, where $G$ is a plumbing tree. We may construct another hypercube $(H_{\veps}, d_{\veps,\veps'})$ by setting $H_{\veps}=H_* (\cC_{\veps})$, and by setting $d_{\veps,\veps'}=(D_{\veps,\veps'})_*$ if $|\veps'-\veps|_{L^1}=1$ and $d_{\veps,\veps'}=0$ otherwise. Note that $\cC_{\veps}$ is naturally a module over $\bF\llsquare U_1,\dots, U_{\ell}\rrsquare $, where $\ell=|L_G|$, however each $U_i$ has the same action on homology, so we view $H_* (\cC_{\veps})$ as being an $\bF\llsquare U\rrsquare $-module, where $U$ acts by any of the $U_i$.   Ozsv\'{a}th, Stipsicz and Szab\'{o} prove the following:
 
 \begin{prop}[\cite{OSSLattice}*{Proposition~4.4}]
 \label{prop:OSS-lattice}
  There is an isomorphism of hypercubes of chain complexes over $\bF\llsquare U\rrsquare $:
  \[
  (H_\veps,d_{\veps,\veps'})\iso \bC \bF(G)
  \]
  which is relatively graded on torsion $\Spin^c$ structures. In particular, the total homologies of the two sides coincide.
 \end{prop}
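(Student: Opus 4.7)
The plan is to establish the isomorphism by explicitly identifying the homology at each corner of the link surgery hypercube for $L_G$ with a summand of $\bCF(G)$, and then matching the edge maps to the lattice differential \eqref{eq:lattice-differential}. The key structural input is that $L_G$ is a link of unknots in $S^3$ with pairwise Hopf-link clasps, so the Alexander multigrading on $\cCFL(S^3, L_G)$ takes values in the linking lattice $\bH(L_G)$, which is in bijection with $\Char(G)$ after a standard affine identification. For each $\veps \in \bE_n$ with $n=|V(G)|$, I would show that $\cC_\veps$ is quasi-isomorphic to a free $\bF[[U]]$-module whose generators are the symbols $[K,E]$ with $E = \{v : \veps_v = 0\}$ and $K$ ranging over $\Char(G)$ with grading compatible with $\veps$. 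Since each link component is an unknot, every Alexander-multigraded summand of $\cCFL(S^3, L_G)$ contributes exactly one $\bF[[U]]$-tower at homology, and this rigidity persists after the positive/negative truncations of $\scU_v$ and $\scV_v$ dictated by $\veps$.

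Next, I would match the length-one edge maps. In the Manolescu--Ozsv\'{a}th surgery formula, the edge map for direction $v$ naturally splits into two summands arising from the two $\Spin^c$ structures on the 2-handle cobordism along $K_v$; algebraically these are the ``$\scU_v$'' and ``$\scV_v$'' contributions to the inclusion of the truncated subcomplex. Tracing the Alexander-grading shifts, the first summand sends $[K,E]$ to $U^{a_v(K,E)}\otimes [K, E-v]$ while the second sends $[K,E]$ to $U^{b_v(K,E)}\otimes [K + 2v^*, E-v]$, where the exponents $a_v, b_v$ agree with those of \cite{OSSKnotLatticeHomology}*{Section~2} after the standard identification of $\bH(L_G)$ with $\Char(G)$. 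Summing the two contributions recovers the lattice differential \eqref{eq:lattice-differential}.

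Finally, I would verify the hypercube relations for the condensed object $(H_\veps, d_{\veps,\veps'})$. This reduces to checking (i) that the square of the length-one induced differential vanishes, which is the combinatorial identity on the $a_v, b_v$ exponents equivalent to $\d^2 = 0$ in $\bCF(G)$, and (ii) that the original length-$\geq 2$ hypercube maps $(D_{\veps,\veps'})_*$ vanish on homology. For (ii) it suffices to observe that each $H_\veps$ is a product of $\bF[[U]]$-towers rigid enough to force any length-$\geq 2$ induced map to be zero on the generators $[K,E]$; the treelike structure of $G$ is essential here, since it guarantees that distinct edges of the hypercube correspond to disjoint Hopf-link clasps, so higher diagonals of $D_{\veps,\veps'}$ decompose into pieces which act trivially on the tower generators. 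The relative grading statement on torsion $\Spin^c$ structures then follows from the standard Maslov-grading computation under the hypercube edge maps.

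The main obstacle will be the careful bookkeeping of $\Spin^c$ and Alexander grading conventions: one must verify that the exponents $a_v(K,E)$ and $b_v(K,E)$ extracted from the surgery formula are identically those appearing in \eqref{eq:lattice-differential}, and that higher-length induced maps vanish on the nose, not merely up to homotopy. This is a careful but essentially combinatorial computation, made tractable precisely because $L_G$ is an iterated Hopf-link connected sum of unknots.
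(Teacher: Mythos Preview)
This proposition is quoted from \cite{OSSLattice}*{Proposition~4.4} and is not proved in the present paper, so there is no in-paper argument to compare against. Your outline is nonetheless the correct strategy and essentially what Ozsv\'{a}th--Stipsicz--Szab\'{o} carry out: identify generators via the affine bijection $\bH(L_G)\cong\Char(G)$, compute each $H_*(\cC_\veps)$ as a product of $\bF[[U]]$-towers using the iterated Hopf-link K\"unneth structure of $\cCFL(L_G)$ (this is \cite{OSPlumbed}*{Lemma~2.6}, \cite{OSSLattice}*{Lemma~4.2}), and then match the two summands of each induced length-one map $(D_{\veps,\veps'})_*$ with the two terms of~\eqref{eq:lattice-differential}.

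Your step (ii), however, is misconceived. By the definition stated immediately before the proposition, the hypercube $(H_\veps,d_{\veps,\veps'})$ has $d_{\veps,\veps'}=0$ for $|\veps'-\veps|_{L^1}\ge 2$ \emph{by fiat}; one is not required to show that $(D_{\veps,\veps'})_*$ vanishes for longer arrows. What must be checked is only that this prescription satisfies the hypercube relations, and that is automatic: for $|\veps''-\veps|_{L^1}=2$ the original relation says precisely that the length-two map $D_{\veps,\veps''}$ is a chain homotopy between the two length-one compositions, so their induced maps on homology sum to zero; for $|\veps''-\veps|_{L^1}\ge 3$ every surviving term has a factor of length $\ge 2$ and hence is zero already. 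Your proposed ``rigidity'' argument---that maps between $\bF[[U]]$-towers must vanish---would not work anyway, since such a map can be multiplication by any $U^k$. The tree hypothesis is not used at this step; it enters earlier, in the K\"unneth computation of $H_*(\cC_\veps)$. You may be conflating this proposition with the genuinely different vanishing argument in Proposition~\ref{prop:tilde-C=lattice}, where one transfers the \emph{full} hypercube via homological perturbation and must show the induced higher-length maps are zero; that argument uses the Hopf filtration, not tower rigidity.
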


\subsection{Systems of arcs}
\label{sec:system-of-arcs}

In this section, we recall the notion of a \emph{system of arcs} for a link $L$, which is a piece of auxiliary data necessary to build the link surgery formula.

\begin{define} Suppose that $L\subset S^3$ is a link such that each component $K_i\subset L$ is equipped with a pair of basepoints, denoted $w_i$ and $z_i$. A \emph{system of arcs} $\scA$ for $L\subset S^3$ consists of a collection of $\ell=|L|$ embedded and pairwise disjoint arcs $\lambda_1,\dots, \lambda_\ell$ arcs, such that
 \[
\lambda_i \cap L=\d \lambda_i \cap L=\{w_i,z_i\}.
\]
\end{define}

If $L$ is oriented, we say an arc $\lambda_i$ is \emph{beta-parallel} if it is a small push-off of the segment of $K_i$ which is oriented from $z_i$ to $w_i$. We say that $\lambda_i$ is \emph{alpha-parallel} if it is a small push-off of the segment of $K_i$ oriented from $w_i$ to $z_i$.

The construction of Manolescu and Ozsv\'{a}th focuses on arc systems where all of the arcs are alpha-parallel. Their proof also applies with little change to the case that each arc is either alpha-parallel or beta-parallel. We write $\cC_{\Lambda}(L,\scA)$ for the link surgery complex computed with the arc system $\scA$.

In \cite{ZemBordered}*{Section~13}, the author studied the effect of changing the arc system, and proved a general formula which computes the effect of changing the arc system. See \cite{ZemBordered}*{Corollary~13.5} for a precise statement. 

%
%A particularly important result is the following:
%
%\begin{thm}[\cite{ZemBordered}*{Theorem~13.1}]
%\label{thm:dependence-A} Let $L\subset S^3$ be a framed link, and let $\scA$ and $\scA'$ be arc systems which differ only on a single knot component $K_1$. Then there is a homotopy equivalence
%\[
%\cC_{\Lambda}(L,\scA)\simeq \cC_{\Lambda}(L,\scA').
%\]
%Furthermore, the homotopy equivalence is $\bF\llsquare U_2,\dots, U_\ell\rrsquare $-equivariant.
%\end{thm}
%
%
%
%\begin{rem}See \cite{ZemBordered}*{Proposition~13.2} for a refinement in terms of type-$D$ modules. Note also that the homotopy equivalence in Theorem~\ref{thm:dependence-A} typically is not $U_1$-equivariant.
%\end{rem}

\subsection{Construction of the surgery hypercube}
\label{sec:basic-systems}
In this section, we sketch the construction of the link surgery hypercube. Manolescu and Ozsv\'{a}th's construction requires a large collection of Heegaard diagrams, which they refer to as a \emph{complete system of Heegaard diagrams}. We describe their construction in a restricted setting, focusing on the case that each arc in our system of arcs $\scA$ for $L$ is either alpha-parallel or beta-parallel. 

We also focus our attention on a special 
class of complete systems, which we call \emph{meridional $\sigma$-basic systems of Heegaard diagrams}.  Such a basic system is constructed via the following procedure. We begin with a Heegaard link diagram $(\Sigma,\as,\bs,\ws,\zs)$ of $(S^3,L)$. If $K_i$ is a component of $L$, write $\lambda_i$ for the arc of $\scA$ for $K_i$. We assume that this Heegaard diagram is chosen so that the basepoints $w_i,z_i$ on  $K_i$ are separated by a single alpha curve $\a_i^s$ (if $\lambda_i$ is beta-parallel) or a single beta curve $\b_i^s$ (if $\lambda_i$ is alpha-parallel). We call the $\a_i^s$ and $\b_i^s$ the \emph{special meridional curves}. We assume that the arc $\lambda_i$ is embedded in $\Sigma$, and $\lambda_i$ is disjoint from all of the attaching curves except for the special meridional alpha or beta curve of $K_i$.  See Figure~\ref{cor:2}.

Suppose $\lambda_i$ is beta-parallel. We consider the component $A_i\subset \Sigma\setminus \as$ which contains $w_i$ and $z_i$. If we glue the two boundary components of $A_i$ corresponding to $\a_i^s$, we obtain a torus with many disks removed (corresponding to other alpha curves). We write $\as_0\subset \as$ and $\bs_0\subset \bs$ for the curves which are not special meridians of any component. See Figure~\ref{cor:2}.

\begin{figure}[ht]
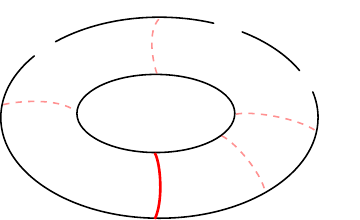
\caption{The region $A_i\subset \Sigma$ in a meridional $\sigma$-basic system. The arc $\lambda_i$ is the dashed arc which connects $z_i$ and $w_i$. The dashed curves labeled $\a'$ denote the successive replacements of $\a^s_i$ in the basic system.}
\label{cor:2}
\end{figure}

Given such a diagram $(\Sigma,\as,\bs,\ws,\zs)$, we construct two hyperboxes of Lagrangians $\cL_{\a}$ and $\cL_{\b}$, as follows. We pick surface diffeomorphisms $\phi_i\colon \Sigma\to \Sigma$, for $i\in \{1,\dots, n\}$. We assume that $\phi_i$ is supported in a neighborhood of the arc for $K_i$, and moves $z_i$ to $w_i$ along this arc. The hyperbox $\cL_{\a}$ has one axis direction for each component of $L$ which has a beta parallel arc in $\scA$. Similarly $\cL_{\b}$ has one axis direction for each component of $L$ which has an alpha parallel arc.

For the sake of concreteness, assume that $K_1,\dots, K_j$ are decorated with beta parallel arcs, and $K_{j+1},\dots, K_{\ell}$ are decorated with alpha parallel arcs.

We will construct a hyperbox $\cL_{\a}$ of some size $\ve{d}=(d_1,\dots, d_j)$. If $(\nu_1,\dots, \nu_j)\in \bE_j$, then  we assume
\[
\as_{(\nu_1 d_1,\dots, \nu_j d_j)}= \left(\phi_1^{1-\nu_1}\circ\cdots\circ \phi_j^{1-\nu_j}\right)(\as).
\]
That is, the attaching curves which are in the corners of $\cL_\a$ are obtained by applying some of the surface diffeomorphisms $\phi_i$ to $\as$. More generally, we assume that for all $\veps_1,\dots, \veps_{i-1},\veps_{i+1}, \dots, \veps_j$, we have
  \begin{equation}
  \phi_i(\as_{(\veps_1,\dots, d_i,\dots, \veps_j)})=\as_{(\veps_1,\dots, 0,\dots, \veps_j)}.
  \label{eq:phi_i-move-curve}
  \end{equation}

We form the $K_i$-direction of $\cL_{\a}$ as follows. Note that the surface isotopy $\phi_i$ only changes the special meridian $\a_i^s$. The subsequent steps in the $K_i$-direction correspond to moving $\a_i^s$ in the region $A_i\subset \Sigma\setminus \as_0$, while avoiding the basepoint $w_i$, so that $\a_i^s$ returns to its original position. This is achieved by a sequence of isotopies and handleslides of $\a_i^s$ across curves in $\as_0$. See Figure~\ref{cor:2}. Since all of the $A_i$ have disjoint interiors, we may do this independently for each link component of $L$ which has a beta-parallel arc. Each of the curves of $\as_0$ is stationary in this construction. To form the hyperbox $\cL_{\a}$, we perform small Hamiltonian translations to each curve of $\as_0$ to achieve admissibility.  The construction of $\cL_{\b}$ is similar.

In a complete system, we also require the $\cL_{\a}$ and $\cL_{\b}$ satisfy the following:
\begin{enumerate}
\item If $\veps<\veps'$ are in $\bE(\ve{d})$ and $|\veps'-\veps|_{L^1}=1$, then $\Theta_{\veps',\veps}$ is a cycle in $\ve{\CF}^-(\as_{\veps'},\as_{\veps})$ which represents the top degree of homology. If $|\veps'-\veps|_{L^{\infty}}=1$, then we require the chain $\Theta_{\veps',\veps}$ to have Maslov grading $|\veps'-\veps|_{L^1}-1$ relative to the top degree generator of homology.
\item As an extension of Equation~\eqref{eq:phi_i-move-curve}, we require that any chains on opposite faces of $\cL_{\a}$ coincide in the following sense. Suppose $\Theta_{\veps',\veps}\in \ve{\CF}^-(\as_{\veps'},\as_{\veps})$ is the chosen chain of $\cL_{\a}$ and 
\[
\veps=(\veps_1,\dots, 0, \dots, \veps_j)\qquad \text{and} \quad \veps'=(\veps_1',\dots, 0,\dots,\veps_j').
\]
Let 
\[
\hat{\veps}=(\veps_1,\dots, d_i, \dots, \veps_j)\qquad \text{and} \quad \hat{\veps}'=(\veps_1',\dots, d_i,\dots,\veps_j')
\]
and let $\Theta_{\hat{\veps}',\hat{\veps}}$ be the corresponding morphism in $\cL_{\a}$. We assume that $\Theta_{\veps',\veps}$ and $\Theta_{\hat{\veps}',\hat{\veps}}$ are equal under the canonical isomorphism
\[
(\phi_i)_*\colon \ve{\CF}^-(\as_{\hat{\veps}'},\as_{\hat{\veps}})\to \ve{\CF}^-(\as_{\veps'},\as_{\veps}).
\]
\item \label{La-Alexander-filtration} If $\veps<\veps'$, $|\veps'-\veps|_{L^\infty}=1$ and
\[
\veps=(\veps_1,\dots, d_i, \dots, \veps_j)\qquad \text{and} \quad \veps'=(\veps_1',\dots, d_i,\dots,\veps_j')
\]
then we require $A_i(\Theta_{\veps',\veps})\le 0$, where $A_i$ denotes the Alexander filtration induced by viewing $(\Sigma,\as_{\veps'},\as_{\veps},\ws,z_i)$ as a Heegaard diagram for an unknot in a connected sum of $S^1\times S^2$'s with extra free basepoints.
\end{enumerate}

 In requirement ~\eqref{La-Alexander-filtration}, we recall that if $\xs$ and $\ys$ are intersection points, the difference of their Alexander filtration is $A_i(\xs)-A_i(\ys)=n_{z_i}(\phi)-n_{w_i}(\phi)$ for a disk $\phi$ from $\xs$ to $\ys$. We extend this filtration to all multiples of intersection points by declaring the variable $U_i$ to decrease filtration level by $1$, and delcaring the other $U_k$ to preserve filtration level. See \cite{MOIntegerSurgery}*{Section~3.2} for more details.

\begin{rem}  Note that condition~\eqref{La-Alexander-filtration} is automatically satisfied for meridional systems since on the relevant faces of $\cL_{\a}$, the basepoints $w_i$ and $z_i$ are in the same region of the Heegaard diagram, so all intersection points have Alexander filtration $0$. 
\end{rem}

By pairing $\cL_{\a}$ and $\cL_{\b}$ together, we obtain an $\ell$-dimensional hyperbox of chain complexes $\ve{\CF}^-(\cL_{\a},\cL_{\b},\ws)$. Each of the constituent complexes are freely generated over $\bF\llsquare U_1,\dots, U_\ell \rrsquare$. To each axis direction, we add an additional layer to $\ve{\CF}^-(\cL_{\a},\cL_{\b})$ which consists of the tautological diffeomorphism maps $(\phi_i)_*$. We compress to obtain an $\ell$-dimensional hypercube of chain complexes, which we will denote by $(\scC_{\veps}, \scD_{\veps,\veps'})_{\veps\in \bE_\ell}$. We may view each $\scC_{\veps}$ as being the Floer complex obtained from $(\Sigma,\as,\bs,\ws,\zs)$ by keeping one basepoint from each link component, as follows. If $\veps\in \bE_\ell$, define the submodule 
\[
N_{\veps}\subset \cCFL(L)
\]
to be generated by $(\scV_{\veps_i}-1)\cdot \cCFL(L)$, ranging over $i$ such that $\veps_i=1$, as well as $(\scU_{\veps_i}-1)\cdot \cCFL(L)$, ranging over $i$ such that $\veps_i=0$. Then 
\[
\scC_{\veps}\iso \cCFL(L)/N_{\veps}.
\]

The link surgery hypercube $\cC_{\Lambda}(L)$ can be constructed as follows. If $K_i\subset L$ is a link component (oriented positively), then  $\Phi^{+K_i}$ is defined to be the canonical map for localizing at $\scV_i$. If $\vec{M}\subset L$ is a sublink, all of whose components are oriented oppositely to $L$, and $\veps',\veps\in \bE_\ell$ are points such that $\veps'-\veps$ is the indicator function for $M$, then the map $\Phi^{\vec{M}}$ is defined to be the unique map which reduces to $\scD_{\veps,\veps'}$ after quotienting the domain and codomain of $\Phi^{\vec{M}}$ by $N_{\veps}$ and $N_{\veps'}$, respectively, and which satisfies the Alexander grading property in Equation~\eqref{eq:grading-link-surgery}. One sets $\Phi^{\vec{M}}=0$ if $\vec{M}$ has more than one component and $\vec{M}$ has a component oriented coherently with $L$.

\section{The bordered perspective on the link surgery formula}
\label{sec:bordered}
In this section, we  recall the bordered perspective \cite{ZemBordered} on the knot and link surgery formulas \cite{OSKnots} \cite{MOIntegerSurgery}. We also prove several important new properties.

\subsection{Linear topological spaces}

In this section, we recall some preliminaries about completions. These are used in the module categories from \cite{ZemBordered}*{Section~6}. We refer the reader to \cite{AtiyahMacdonald}*{Section~10} for some basic background on completions of modules.  

A \emph{linear topological vector space} $\cX$ consists of a vector space equipped with a topology such that the following hold:
\begin{enumerate}
\item There is a basis of open sets centered at $0$ consisting of subspaces.
\item The addition function $\cX\times \cX\to \cX$ is continuous.
\end{enumerate}

Such a topology may be specified by picking a decreasing filtration $(\cX_{\a})_{\a\in A}$ of subspaces of $\cX$, indexed by some directed, partially ordered set $A$. Every linear topological space can be expressed this way, for example by setting $A$ to be the set of open linear subspaces of $\cX$, ordered by reverse inclusion.

 If $\cX$ has filtration $(\cX_{\a})_{\a\in A}$, as above, then the \emph{completion} of $\cX$ is the inverse limit 
\[
\ve{\cX}=\varprojlim_{\a\in A} \cX/\cX_{\a}.
\]

If $R$ is a ring, a \emph{linear topological $R$-module} is similar, except we require a basis at $0$ to consist of $R$-submodules.

If $\cX$ and $\cY$ are linear topological spaces, we will define a \emph{linear topological morphism} from $\cX$ to $\cY$ to be a continuous linear map from $\ve{\cX}$ to $\ve{\cY}$ (i.e. a continuous map between completions). 

Given two linear topological vector spaces (or linear topological $R$-modules) $\cX$ and $\cY$, there are several ways to topologize the tensor product. This phenomenon parallels notions from functional analysis, where the tensor product of two Banach spaces possesses many different Banach space structures. See \cite{GrothendieckNuclear}.  We consider the following topologies:
\begin{enumerate}
\item $\cX\otimes^! \cY$ (the \emph{standard tensor product}): A subspace $E\subset \cX\otimes^! \cY$ is open if and only if there are open subspaces $U\subset \cX$ and $V\subset \cY$ so that $U\otimes \cY$ and $\cX\otimes V$ are contained in $E$.
\item $\cX\vecotimes \cY$ (the \emph{chiral tensor product}): A subspace $E\subset \cX\vecotimes \cY$ is open if and only there is some open subspace $U\subset \cX$ such that $U\otimes \cY\subset E$, and for all $x\in X$ there is an open subspace $V_x\subset \cY$ so that $x\otimes V_x\subset E$.
\end{enumerate}
The above are described by Beilinson \cite{Beilinson-Tensors}. See Positelski's work \cite{Positelski-Linear} for a helpful introduction. The standard tensor product predates Beilinson's work and coincides with well-known constructions. The chiral tensor product is due to Beilinson. Both $\otimes^!$ and $\vecotimes$ are individually associative (with tensor products of the same type), but not mutually associative.

\subsection{The algebra $\cK$}
\label{sec:surgery-algebra}

  The author described in \cite{ZemBordered} the following associative algebra $\cK$, called the \emph{surgery algebra}. The algebra $\cK$ is an algebra over the ring of two idempotents $\ve{I}_0\oplus \ve{I}_1$, where $\ve{I}_{\veps}\iso \bF$ for $\veps\in \{0,1\}$. We set
\[
\ve{I}_0\cdot \cK\cdot \ve{I}_0=\bF[\scU,\scV]\quad\text{and} \quad \ve{I}_1\cdot \cK\cdot \ve{I}_1=\bF[\scU,\scV,\scV^{-1}].
\]
Furthermore, $\ve{I}_0\cdot \cK \cdot \ve{I}_1=0$. Finally, $\ve{I}_1\cdot \cK\cdot \ve{I}_0$ has two special algebra elements, $\sigma$ and $\tau$, which are subject to the relations
\[
\sigma\scU=\scU  \sigma, \quad \sigma  \scV=\scV \sigma,\quad \tau \scU=\scV^{-1} \tau,\quad \text{and} \quad \tau  \scV=\scU^2 \scV \tau,
\]
where $U=\scU\scV$.

It is sometimes helpful to consider the two algebra homomorphisms 
\[
\phi^\sigma,\phi^\tau\colon \bF[\scU,\scV]\to \bF[\scU,\scV,\scV^{-1}]
\]
 where $\phi^\sigma$ is the inclusion given by localizing at $\scV$, and $\phi^\tau$ satisfies $\phi^\tau(\scU)=\scV^{-1}$ and $\phi^\tau(\scV)=U \scV$. Then the relations for $\cK$ become
\[
\sigma\cdot a=\phi^\sigma(a)\cdot \sigma\quad \text{and} \quad \tau\cdot a=\phi^\tau(a)\cdot \tau,
\]
whenever $a\in \ve{I}_0\cdot \cK\cdot \ve{I}_0$. 

\begin{rem} There is a more symmetric description of the algebra $\cK$. If we view $\ve{I}_1\cdot \cK\cdot \ve{I}_1$ as being $\bF[U,T,T^{-1}]$ (where $U=\scU\scV$ and $T=\scV$) and then the relations of the algebra become
\[
\sigma \scU=UT^{-1} \sigma, \quad \sigma \scV=T \sigma,\quad \tau \scU=T^{-1} \tau\quad \text{and} \quad \tau \scV=UT \tau. 
\]
\end{rem}

\subsection{Modules over $\cK$}

As described in \cite{ZemBordered}*{Section~6.2}, the surgery algebra $\cK$ has a natural filtration consisting of the following subspaces:

\begin{define}
\label{def:knot-topology}
Suppose that $n\in \N$ is fixed. We define $J_n\subset \cK$ to be the span of following set of generators:
\begin{enumerate}
\item In $\ve{I}_0\cdot \cK\cdot \ve{I}_0$, the generators $\scU^i\scV^j$, for $i\ge n$ or $j\ge n$ (i.e.  $\max(i,j)\ge n$).
\item In $\ve{I}_1\cdot \cK\cdot \ve{I}_0$, the generators $\scU^i\scV^j \sigma$ for $i\ge n$ or $j\ge n$.
\item In $\ve{I}_1\cdot\cK\cdot \ve{I}_0$, the generators $\scU^i\scV^j\tau$ for 
$j\le 2i-n$ or $i\ge n$.
\item In $\ve{I}_1\cdot\cK\cdot \ve{I}_1$, the generators $\scU^i\scV^j$ where $i\ge n$.
\end{enumerate}
\end{define}

The subspaces $J_n$ are right ideals (i.e. $J_n\cdot \cK\subset J_n$). The filtration by the subspaces $J_n$ induces a linear topology on the algebra $\cK$.

In \cite{ZemBordered}*{Proposition~6.4} the author proves that multiplication is continuous as a map
\[
\mu_2\colon \cK\vecotimes_{\mathrm{I}} \cK\to \cK.
\]
In the terminology of Beilinson \cite{Beilinson-Tensors}, $\cK$ is a \emph{linear topological chiral algebra}.

\begin{rem} The map $\mu_2$ is not continuous as a map from $\cK\otimes^!_{\mathrm{I}} \cK$ to $\cK$ (i.e. using the standard tensor product topology). To see this, observe that $\scV^{-i}\otimes \scV^i\sigma\to 0$ in $\cK\otimes^!_{\mathrm{I}} \cK$, while $\mu_2(\scV^{-i}\otimes \scV^i\sigma)=\sigma\not\to 0$. 
\end{rem}

\begin{define}
\label{def:Alexander-type-D}
A \emph{type-$D$ Alexander module} over $\cK$ consists of a linear topological $\ve{I}$-module $\cX$ equipped with a linear topological morphism (i.e. a continuous linear map between completions) 
\[
\delta^1\colon \cX\to \cX\vecotimes \cK,
\]
 such that $(\id\otimes \mu_2)\circ(\delta^1\otimes \id_{\cK})\circ \delta^1=0$.
\end{define}

In \cite{ZemBordered}*{Section~6}, the author also describes the categories of type-$A$ and $DA$ Alexander modules. A type-$A$ Alexander module $(\cX, m_j)$ consists of a linear topological right $\ve{I}$-module $\cX$, equipped with linear topological morphisms
\[
m_{j+1}\colon \cK\vecotimes_{\mathrm{I}}\cdots \vecotimes_{\mathrm{I}} \cK\vecotimes_{\mathrm{I}} \cX\to \cX
\]
which satisfy the $A_\infty$-module relations. Type-$DA$ Alexander modules are similar. 

Lipshitz, Ozsv\'{a}th and Thurston's box tensor product operation \cite{LOTBordered}*{Section~2.4} can also be defined on Alexander modules.  If $\cX^{\cK}$ and ${}_{\cK} \cY$ are two Alexander modules, we may form their box tensor product
\[
\cX^{\cK}\hatbox {}_{\cK} \cY.
\]
This is a chain complex whose underlying linear topological space is
\[
\cX\vecotimes_{\mathrm{I}} \cY,
\]
and whose differential is given by the composition of the two maps
\[
\begin{tikzcd}
\cX\vecotimes \cY 
\ar[r, "\delta\otimes \id"]
&
\prod_{n\ge 0} (\cX\vecotimes  \underbrace{\cK\vecotimes \cdots \vecotimes \cK}_n \vecotimes \cY)\ar[r, "\id\otimes m_*"]
& \cX\vecotimes \cY.
\end{tikzcd}
\]
Here $\delta$ is the sum of all iterates of $\delta^1$ (the 0-th iterate being the identity). As in \cite{LOTBordered}, we additionally need to assume some boundedness assumptions on the structure maps of either $\cX^{\cK}$ or ${}_{\cK} \cY$ for the above composition to be well-defined. For our purposes, we will usually assume that $m_j=0$ on $\cY$ for all $j\gg 0$.

 Box tensor products of Alexander $DA$ modules may also be defined similarly.

\subsection{Surgery formulas and $\cK$}

The author showed how to view the knot and link surgery formulas of Ozsv\'{a}th--Szab\'{o} \cite{OSIntegerSurgeries} \cite{OSRationalSurgeries} and Manolescu--Ozsv\'{a}th \cite{MOIntegerSurgery} as type-$D$ and $A$ modules over the algebra $\cK$. To a knot $K$ in a rational homology 3-sphere $Y$, equipped with Morse framing $\lambda$, the mapping cone formula of Ozsv\'{a}th--Szab\'{o} may naturally be encoded using either a type-$D$ module over $\cK$, denoted $\cX_{\lambda}(Y,K)^{\cK}$, or a type-$A$ module over $\cK$, denoted ${}_{\cK} \cX_{\lambda}(Y,K)$. Similarly, if $L\subset S^3$ is a link with framing $\Lambda$, the link surgery formula of Manolescu and Ozsv\'{a}th \cite{MOIntegerSurgery} can be naturally encoded using a type-$D$ module $\cX_{\Lambda}(L)^{\cL_{\ell}}$ over the tensor product algebra $\cL_\ell=\cK^{\otimes_{\bF} \ell}$, where $\ell=|L|$.

We describe the correspondence for the case of the knot surgery formula for a knot $K\subset S^3$. We recall from Section~\ref{sec:background-surgery-formulas} that Ozsv\'{a}th and Szab\'{o}'s mapping cone complex is an appropriate completion of the mapping cone
\[
\Cone\left(v+h_n\colon \cCFK(K)\to \scV^{-1} \cCFK(K)\right).
\]
Here,  $v$ and $h_n$ are two $\bF[U]$-equivariant maps, where $U$ acts by $\scU\scV$.

 The type-$D$ module $\cX_{n}(K)^{\cK}$ is defined as follows. Let $\xs_1,\dots, \xs_n$ be a free $\bF[\scU,\scV]$-basis of $\cCFK(K)$. Then $\cX_n(K)\cdot \ve{I}_0$ is generated over $\bF$ by copies of the basis elements $\xs_1^0,\dots, \xs_n^0$, and similarly $\cX_n(K)\cdot \ve{I}_1$ is generated by elements $\xs_1^1,\dots, \xs_n^1$. The structure map
\[
\delta^1\colon \cX_n(K)\to \cX_n(K)\otimes_{\mathrm{I}} \cK
\]
has three types of summands: those arising from $\d$, those arising from $v$, and those arising from $h_n$, as follows. Write $\d$ for the differential on $\cCFK(K)$. If $\d\xs_i$ has a summand of $ \ys_j\cdot \scU^n\scV^m$, then $\delta^1(\xs_i^\veps)$ has a summand of $\ys_j^\veps\otimes \scU^n \scV^m$, for $\veps\in \{0,1\}$. Each $\delta^1(\xs^0_i)$ has a summand of the form $\xs^1_i\otimes \sigma$. Finally, if $h_n(\xs_i)$ has a summand of $ \ys_j\cdot\scU^n\scV^m$, then $\delta^1(\xs_i^0)$ has a summand of $\ys_j^1\otimes \scU^n \scV^m \cdot\tau$. It is verified in \cite{ZemBordered}*{Lemma~8.9} that $\cX_n(K)^{\cK}$ is a type-$D$ module over $\cK$.

For the description of the  link surgery complex $\cC_{\Lambda}(L)$ of an integrally framed link $L\subset S^3$ in terms of type-$D$ modules over the algebra $\cL_\ell$, we refer the reader to \cite{ZemBordered}*{Section~8}. We note that in general, the homotopy type of the link surgery type-$D$ module depends non-trivially on the system of arcs $\scA$. For example, the type-$D$ link surgery module of the Hopf link depends non-trivially on $\scA$;  see \cite{ZemBordered}*{Section~16}. We write $\cX_{\Lambda}(L,\scA)^{\cL}$ for the type-$D$ module constructed with $\scA$.

There is also a bimodule ${}_{\cK} \cT^{\cK}$ whose effect is changing an alpha-parallel arc to a beta-parallel arc, or vice-versa. See \cite{ZemBordered}*{Section~14}. This bimodule is related to the Dehn twist diffeomorphism on knot Floer homology discovered by Sarkar \cite{SarkarMovingBasepoints} and further studied by the author \cite{ZemQuasi}.

\subsection{Type-$A$ and $D$ modules for solid tori}

We now review the type-$A$ and type-$D$ modules of integrally framed solid tori (by which we mean the complements of integrally framed unknots in $S^3$).

If $n\in \Z$, we define the type-$A$ and type-$D$ modules of the $n$-framed solid torus, denoted ${}_{\cK} \cD_n$ and $\cD_n^{\cK}$, as follows. We begin with ${}_{\cK} \cD_n$. We set 
\[
\ve{I}_0\cdot \cD_n=\bF\llsquare \scU,\scV\rrsquare\quad \text{and}\quad  \ve{I}_1\cdot \cD_n=\bF\llsquare \scU,\scV,\scV^{-1}\rrsquare.
\]
The action of $\ve{I}_{\veps}\cdot \cK\cdot \ve{I}_{\veps}$ is ordinary polynomial multiplication. If $\ve{x}\in \ve{I}_0\cdot \cD_n$,  we set $m_2(\sigma, \ve{x})=\phi^\sigma(\ve{x})$ and $m_2(\tau, \ve{x})=\scV^n\cdot \phi^\tau(\ve{x})$, where
\[
\phi^\sigma,\phi^\tau\colon \bF[\scU,\scV]\to \bF[\scU,\scV,\scV^{-1}]
\]
are the following maps. The map $\phi^\sigma$ is the inclusion from localizing at $\scV$. The map $\phi^\tau$ is given by $\phi^\tau(\scU^i\scV^j)=\scU^j \scV^{2j-i}$.

The type-$D$ module $\cD_n^{\cK}$ is as follows. As a right $\ve{I}$-module, we set $\cD_n\iso \ve{I}$, viewed as being generated by $i_0\in \ve{I}_0$ and $i_1\in \ve{I}_1$. The structure map is given by the formula
\[
\delta^1(i_0)=i_1\otimes (\sigma+\scV^n \tau).
\]

\subsection{The merge and type-$A$ identity bimodules}
\label{sec:merge}

In \cite{ZemBordered}*{Section~8}, the author defined the \emph{merge} bimodule and the \emph{type-$A$ identity} bimodule. We recall the definition of these modules presently.

We begin with the merge module ${}_{\cK| \cK} M^{\cK}$. Ignoring completions, the merge module is a $DA$-bimodule over $(\cK\otimes_{\bF} \cK, \cK)$. However, the structure map $\delta_3^1$ of the merge module is not continuous as a map from $(\cK\otimes_{\bF}^! \cK)\vecotimes (\cK\otimes_{\bF}^! \cK)\vecotimes M$ to $M\vecotimes \cK$. Instead, the merge module is a \emph{split Alexander} module in the terminology of \cite{ZemBordered}*{Section~6.4}. This is weaker than being an Alexander module. This condition means that the map $\delta_3^1$ is continuous for a finer topology than the topology used to define an Alexander module over $\cK\otimes_{\bF} \cK$. The split Alexander property is sufficient for taking the box tensor product of ${}_{\cK|\cK} M^{\cK}$ with a pair of type-$D$ modules $\cX^{\cK}$ and $\cY^\cK$, but not for general type-$D$ modules over $\cK\otimes_{\bF}^! \cK=\cL_2$. 

 As an $(\ve{I}\otimes \ve{I},\ve{I})$-module, $M$ is isomorphic to $\ve{I}$. The right action is the obvious one. The left action is given by $(i_{\veps}\otimes i_{\nu})\cdot i=i_{\veps}\cdot i_{\nu}\cdot i$.
The map $\delta_2^1$ is defined as follows. If $a,b\in \ve{I}_{0} \cdot \cK \cdot \ve{I}_0$, then we set $\delta_2^1(a\otimes b, i_0)=i_0\otimes a\cdot b$. We use the same formula if  $a,b\in \ve{I}_1\cdot \cK \cdot \ve{I}_1$. If $a$ and $b$ are in other idempotents, then we set $\delta_2^1(a\otimes b,i_\veps)=0$. 

 Additionally, there is a $\delta_3^1$ term, determined by the equations
\[
\delta_3^1(\tau\otimes 1, 1\otimes \tau, i_0)=i_0\otimes\tau,\quad \quad \delta_3^1(\sigma\otimes 1, 1\otimes \sigma, i_0)=i_1\otimes \sigma \quad \text{and}
\]
\[
\delta_3^1(1\otimes \tau,\tau\otimes 1, i_0)=\delta_3^1(1\otimes \sigma,\sigma\otimes 1, i_0)=0.
\]

Using the merge module, we define the type-$A$ identity bimodule
\[
{}_{\cK|\cK} [\bI^{\Supset}]:={}_{\cK| \cK} M^{\cK}\hatbox {}_{\cK} \cD_0.
\]
The type-$A$ identity modules relates the type-$D$ and type-$A$ modules of a knot complement $K$ via the formula
\[
\cX_n(K)^\cK \hatbox {}_{\cK|\cK}  [\bI^{\Supset}]\iso {}_{\cK} \cX_n(K).
\]

\begin{rem} In our definition of the completed box tensor product $\hatbox$, we use the chiral tensor product $\vecotimes$. However, since $M$ is a finite dimensional vector space, the resulting topology on $[\bI^{\Supset}]$ coincides with the topology on ${}_{\cK}\cD_0$. Similarly the topology on ${}_{\cK} \cX_n(K)$ coincides with the topology on  $\cX_n(K)^{\cK}\hatbox {}_{\cK} \cD_0$.
\end{rem}

\subsection{A pairing theorem}

 Suppose that $L_1$ and $L_2$ are two links in $S^3$. The author \cite{ZemBordered}*{Section~12}  gave several descriptions of the link surgery formula for $L_1\#L_2$ in terms of the link surgery formulas for $L_1$ and $L_2$. We refer to these connected sum formulas \emph{pairing theorems} since topologically performing $\lambda_1+\lambda_2$ surgery on $K_1\# K_2\subset S^3$ is the same as gluing the complements of $K_1$ and $K_2$ via the orientation reversing diffeomorphism which sends the meridian $\mu_1$ to $\mu_2$ and the longitude $\lambda_1$ of $K_1$ to the longitude $-\lambda_2$ of $K_2$.  See \cite{ZemBordered}*{Section~12} for more details on these pairing theorems.

We begin by describing the pairing theorem on the level of the link surgery complexes. Subsequently, we will describe the connected sum formula in terms of type-$D$ modules. Write $\cC_{\Lambda_1}(L_1)=(\cC_{\veps}(L_1),d_{\veps,\veps'})$ and $\cC_{\Lambda_2}(L_2)=(\cC_{\nu}(L_2),\delta_{\nu,\nu'})$ for the link surgery formulas of links $L_1$ and $L_2$, with integral framings $\Lambda_1$ and $\Lambda_2$. Write $K_1$ and $K_2$ for the distinguished components of $L_1$ and $L_2$ (along which we take the connected sum). We assume that the link surgery complexes are computed with systems of arcs $\scA_1$ and $\scA_2$ so that $K_1$ has an alpha-parallel arc, and $K_2$ has a beta-parallel arc. We write $\scA_{1\# 2}$ for the system of arcs on $L_1 \# L_2$ which coincides with $\scA_1$ and $\scA_2$ away from $K_1\# K_2$, and which has an arc for $K_1\# K_2$ consisting of the co-core of the connected sum band. See Figure~\ref{fig:4}.

\begin{figure}[ht]
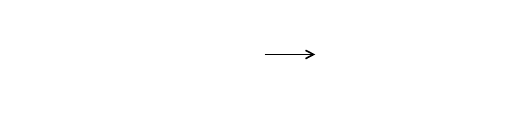
\caption{Left: arc systems $\scA_1$ and $\scA_2$ so that components $K_1\subset L_1$ and $K_2\subset L_2$ have arcs which are beta-parallel and alpha-parallel. Right: The arc system $\scA_{1\#2}$ on $L_1\# L_2$.}
\label{fig:4}
\end{figure}

 If $\veps\in \{0,1\}$, write $\cC^{(*,\veps)}(L_1)$ for the codimension one subcube of $\cC_{\Lambda_1}(L_1,\scA_1)$ consisting of complexes which have $K_1$-component $\veps$. Define $\cC^{(*,\veps)}(L_2)$ similarly. We may view $\cC_{\Lambda_1}(L_1,\scA_1)$ as a mapping cone from $\cC^{(*,0)}(L_1)$ to $\cC^{(*,1)}(L_1)$ as follows:
 \[
 \cC_{\Lambda_1}(L_1,\scA_1)=\Cone\big( \begin{tikzcd}[column sep=1.5cm] \cC^{(*,0)}(L_1)\ar[r, "F^{K_1}+F^{-K_1}"]& \cC^{(*,1)}(L_1)
 \end{tikzcd}\big).
 \]
In the above, and $F^{K_1}$ is the sum of the hypercube maps $\Phi^{\vec{M}}$ of $\cC_{\Lambda_1}(L_1)$ ranging over all oriented sublinks $\vec{M}\subset L_1$ such that $+K_1\subset \vec{M}$. Similarly $F^{-K_1}$ is the sum of the hypercube maps such that $-K_1\subset \vec{M}$. Each $\cC^{(*,\veps)}(L_1)$ has an internal differential consisting of the sum of the hypercube maps for sublinks $\vec{M}$ which do not contain $\pm K_1$.  We may similarly view $\cC_{\Lambda_2}(L_2)$ as a mapping cone from $\cC^{(*,0)}(L_2)$ to $\cC^{(*,1)}(L_2)$. 

The pairing theorem is the following:
\begin{thm}[\cite{ZemBordered}*{Theorem~12.1}]
\label{thm:pairing-main}
With respect to the above notation, the link surgery complex $\cC_{\Lambda_1+\Lambda_2}(L_1\# L_2,\scA_{1\# 2})$ is homotopy equivalent to
\[
 \begin{tikzcd}[column sep=3.4cm] \cC^{(*,0)}_1\otimes_{\bF[\scU,\scV]} \cC^{(*,0)}_2\ar[r, "F^{K_1}| F^{K_2}+F^{-K_1}| F^{-K_2}"] & \cC^{(*,1)}_1\otimes_{\bF[\scU,\scV,\scV^{-1}]} \cC^{(*,1)}_2.
 \end{tikzcd}
\]
In the above $\cC_i^{(*,\veps)}$ denotes $\cC^{(*,\veps)}(L_i)$ and $\Lambda_1+\Lambda_2$ is obtained by summing the framing on $K_1$ and $K_2$, and using the  framings from $L_1$ and $L_2$ on the other components. Also, the differential on $\cC^{(*,\veps)}(L_1)\otimes \cC^{(*,\veps)}(L_2)$ is the ordinary differential (Leibniz rule) on the tensor product of two chain complexes. 
\end{thm}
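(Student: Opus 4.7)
The plan is to work at the level of type-$D$ modules over $\cK$ and reduce the connected-sum pairing to a bordered identity involving the merge bimodule. First, decompose each $\cC_{\Lambda_i}(L_i,\scA_i)$ by isolating the surgery data on the distinguished component $K_i$: write
\[
\cC_{\Lambda_i}(L_i,\scA_i)\simeq \cX_i^{\cK}\boxtimes {}_{\cK}\cD_{n_i},
\]
where $n_i$ is the framing of $K_i$ and $\cX_i^{\cK}$ is the type-$D$ module obtained from the full link-surgery type-$D$ module of $L_i$ by pairing all non-distinguished components with their solid-torus type-$A$ modules. Under this decomposition, $\cX_i\cdot \ve{I}_\veps$ recovers $\cC^\veps(L_i)$, and the $\sigma$- and $\tau$-labelled components of $\delta^1$ on $\cX_i$ encode $F^{K_i}$ and $F^{-K_i}$ respectively (with the framing contribution absorbed into $\cD_{n_i}$).

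Next, establish a bordered-level connected-sum pairing
\[
\cC_{\Lambda_1+\Lambda_2}(L_1\#L_2,\scA_{1\#2})\simeq (\cX_1\boxtimes \cX_2)^{\cK\otimes \cK}\boxtimes {}_{\cK\otimes\cK}M_2^{\cK}\boxtimes {}_{\cK}\cD_{n_1+n_2}.
\]
This is the central step. Start with basic meridional diagrams for $L_1$ and $L_2$, arranged so that $K_1$ has a beta-parallel arc and $K_2$ an alpha-parallel arc. The connected-sum diagram is then obtained by a band sum across a small neighborhood of the relevant $w_i$'s, with the co-core arc serving as the new $\lambda$ for $K_1\#K_2$. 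The hypothesis on the arc systems is crucial: the special meridians $\a^s_1$ and $\b^s_2$ lie on opposite types of attaching families, so after gluing they remain geometrically compatible with the merge structure near the connected-sum sphere. A polygon count then matches the operations of $M_2$: ordinary bigon/triangle counts produce the tensor-product differential, while triangle degenerations supported near the connected-sum sphere contribute precisely the $\sigma\otimes \sigma\mapsto \sigma$ and $\tau\otimes \tau\mapsto \tau$ terms of $\delta_3$.

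Finally, unfold the composite box product. Over the $\ve{I}_0$-idempotent of $\cD_{n_1+n_2}$, the merge applies polynomial multiplication over $\bF[\scU,\scV]$, yielding $\cC^0(L_1)\otimes_{\bF[\scU,\scV]}\cC^0(L_2)$; similarly the $\ve{I}_1$-idempotent produces $\cC^1(L_1)\otimes_{\bF[\scU,\scV,\scV^{-1}]}\cC^1(L_2)$. The structure map $\delta^1(i_0)=i_1\otimes(\sigma+\scV^{n_1+n_2}\tau)$ of $\cD_{n_1+n_2}^\cK$, combined with the $\delta_3$-operations of $M_2$, assembles exactly into the connecting map $F^{K_1}\otimes F^{K_2}+F^{-K_1}\otimes F^{-K_2}$.

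The main obstacle is the bordered pairing identity in the second step: one must verify that all mixed $\sigma\otimes \tau$ type contributions cancel, and that the additional holomorphic polygons in the connected-sum diagram are accounted for by the listed $\delta_2$ and $\delta_3$ operations of $M_2$, and nothing else. This requires a degeneration analysis near the connected-sum sphere, exploiting the alpha/beta-parallel compatibility of $\scA_{1\#2}$ to rule out spurious contributions and to match each merge operation with a specific limiting polygon configuration.
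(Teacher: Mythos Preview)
This theorem is not proved in the present paper: it is quoted from \cite{ZemBordered}*{Theorem~12.1}, and immediately after the statement the paper records precisely the type-$D$ reformulation you arrive at, namely
\[
\cX_{\Lambda_1+\Lambda_2}(L_1\#L_2,\scA_{1\#2})^{\cL_{\ell_1+\ell_2-1}}\simeq \left(\cX_{\Lambda_1}(L_1,\scA_1)^{\cL_{\ell_1}} \otimes_{\bF} \cX_{\Lambda_2}(L_2,\scA_2)^{\cL_{\ell_2}}\right)\hatbox {}_{\cK\otimes \cK} M_2^{\cK},
\]
again citing \cite{ZemBordered}*{Section~12}. So there is no in-paper proof to compare against; your outline is aimed at reconstructing the argument from the cited source.

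That said, your strategy matches the actual one in \cite{ZemBordered}: pass to type-$D$ modules, identify the connected-sum operation with tensoring against the merge bimodule, and then read off the mapping-cone description by unfolding the box product with $\cD_{n_1+n_2}$. Two small discrepancies: you have swapped which of $K_1,K_2$ carries the alpha- versus beta-parallel arc relative to the paper's convention (immaterial, by symmetry); and your preliminary step of boxing in $\cD_0$ on all non-distinguished components is a simplification --- the version stated in the paper keeps all $\cL_{\ell_i}$ algebra factors and merges only along the distinguished one, which is the stronger statement one actually wants for iterated use.

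You have correctly isolated where the real work lies: the holomorphic degeneration analysis on the connected-sum diagram that matches polygon counts to the $\delta_2^1$ and $\delta_3^1$ operations of $M_2$ and rules out mixed $\sigma\otimes\tau$ contributions. That analysis is substantial and is the content of \cite{ZemBordered}*{Section~12}; what you have written is an accurate roadmap but not yet a proof.
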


Note that there is a stronger version of Theorem~\ref{thm:pairing-main} in terms of the type-$D$ modules. According to \cite{ZemBordered}*{Theorem~12.1}, there is a homotopy equivalence
\[
\begin{split}
&\cX_{\Lambda_1+\Lambda_2}(L_1\#L_2,\scA_{1\#2})^{\cL_{\ell_1+\ell_2-1}}\\
\simeq& \left(\cX_{\Lambda_1}(L_1,\scA_1)^{\cL_{\ell_1}} , \cX_{\Lambda_2}(L_2,\scA_2)^{\cL_{\ell_2}}\right)\hatbox {}_{\cK| \cK} M^{\cK}.
\end{split}
\]
Note that if we ignore completions, the  box tensor product in the above equation is obtained as follows: We first take the external tensor product of $\cX_{\Lambda_1}(L_1,\scA_1)^{\cL_{\ell_1}}$ and $ \cX_{\Lambda_2}(L_2,\scA_2)^{\cL_{\ell_2}}$, i.e. we take the tensor product of the modules over $\bF$, and use the Leibniz rule to form the differential $\delta^1\otimes \id\otimes 1_{\cL_{\ell_2}}+\id\otimes 1_{\cL_{\ell_1}}\otimes \delta^1$ (with tensor factors reordered in the obvious way). This yields a type-$D$ module over $\cL_{\ell_1+\ell_2}$. Next, we view ${}_{\cK|\cK} M^{\cK}$ as a type-$DA$ module over $(\cK\otimes_{\bF} \cK,\cK)$, and take the external tensor product of $M$ with the identity bimodule ${}_{\cL_{\ell_1+\ell_2-2}}[\bI]^{\cL_{\ell_1+\ell_2-2}}$ to get a $DA$-bimodule over $(\cL_{\ell_1+\ell_2},\cL_{\ell_1+\ell_2-1})$. Finally, we compute the box tensor product as normal \cite{LOTBordered}*{Section~2.4}.

\subsection{The pair-of-pants bimodules}

The connected sum formula in Theorem~\ref{thm:pairing-main} requires one of the components $K_1\subset L_1$ and $K_2\subset L_2$ to have an alpha-parallel arc, and the other to have a beta-parallel arc. The output arc in $\scA_{1\# 2}$ is neither alpha-parallel nor beta-parallel. For taking further connected sums with other knots, we need to change the arc $\scA_{1\# 2}$ so that the arc for $K_1\#K_2$ is either alpha or beta-parallel. A general formula for changing arcs is described in \cite{ZemBordered}*{Section~13.2}. 

 In our present case, taking the connected sum and then changing the arc for $K_1\# K_2$ can be encoded by one of two bimodules, which are similar to the merge module. We call these the \emph{pair-of-pants} bimodules,  and we denote them by ${}_{\cK|\cK} W_l^{\cK}$ and ${}_{\cK|\cK} W_r^{\cK}$. Their definitions will be recalled below.

Before we describe the bimodules $W_l$ and $W_r$, we will recall some motivation for the formula. Heegaard Floer homology is an invariant of based 3-manifolds. There is a fibration
\[
\Diff^+(Y,w)\to \Diff^+(Y)\to Y.
\]
The long exact sequence on homotopy groups gives a map $\pi_1(Y,w)\to \pi_0\Diff^+(Y,w),$ and therefore a loop  $\g$ in $Y$ based at $w$ induces an endomorphism of $\CF^-(Y,w)$. If $\g\in \pi_1(Y,w)$, we write $\phi_\gamma$ for the induced diffeomorphism. In \cite{ZemGraphTQFT}*{Theorem~D}, we computed the induced action on Heegaard Floer homology to be
\begin{equation}
(\phi_\gamma)_*\simeq \id+ \Phi_{w}\circ A_{\g},
\label{eq:basepoint-moving}
\end{equation}
where $A_\g$ denotes the action of $\Lambda^* H_1(Y)/\Tors$, and $\Phi_w$ denotes the algebraic basepoint action.

Next, we observe that if $\g_1\in H_1(Y_1)$ and $Y_2$ is some other 3-manifold, then  $\g_1\in H_1(Y_1\#Y_2)$ acts on $\CF^-(Y_1\# Y_2)$  by $A_{\g_1}\otimes\id$. The basepoint action on $\CF^-(Y_1\# Y_2)\iso \CF^-(Y_1)\otimes_{\bF[U]} CF^-(Y_2)$ is equal to
\[
\Phi_1\otimes \id+\id\otimes \Phi_2.
\]

Note that in the case of the arc system $\scA_{1\# 2}$, we may obtain an alpha parallel or beta parallel arc system by concatenating the arc for $K_1\# K_2$ with an arc that runs parallel to either $K_1$ or $K_2$. In the case when $L_1$ and $L_2$ both have a single component, it is possible to use Equation~\eqref{eq:basepoint-moving} to prove that the map $\Phi^{-K_1\#K_2}$ becomes the composition of $\Phi^{-K_1}|\Phi^{-K_2}$ with the basepoint moving map 
\[
\id+\scV^{-1} (\Phi_1|1+1|\Phi_2)\circ (A_{[K_1]}|1).
\]
(The factor of $\scV^{-1}$ is due to the fact that $\Phi_i$ shift the Alexander grading up by 1). 

In \cite{ZemBordered}*{Section 15.2}, we generalize the above discussion to the setting of the surgery hypercubes, and we describe the effect of changing $\scA_{1\# 2}$ to an alpha or beta parallel arc system. To state our result, we write $\cC_{\Lambda_i}(L_i,\scA_i)$ as mapping cones 
 \[
 \cC_{\Lambda_i}(L_i,\scA_i)=\Cone\left( \begin{tikzcd}[column sep=1.5cm] \cC^{(*,0)}(L_i)\ar[r, "F^{K_i}+F^{-K_i}"] & \cC^{(*,1)}(L_i)
 \end{tikzcd}\right).
 \]
 
 \begin{thm}[\cite{ZemBordered}*{Section~15.2}]
 \label{thm:pair-of-pants-on-hypercube}
Suppose that $L_1,L_2\subset S^3$ are two framed links with systems of arcs $\scA_1$ and $\scA_2$, respectively. Suppose also that we form $L_1\# L_2$ by taking the connected sum along components $K_1\subset L_1$ and $K_2\subset L_2$. Suppose that the arc for $K_1$ is alpha-parallel, and the arc for $K_2$ is beta-parallel. Let $\scA_l$ denote the system of arcs on the connected sum which coincides with $\scA_1$ and $\scA_2$ away from $K_1\# K_2$ and is alpha-parallel on $K_1\#K_2$. Then $\cC_{\Lambda_1+\Lambda_2}(L_1\# L_2, \scA_{l})$ may also be described as the mapping cone
\begin{equation}
 \begin{tikzcd}[column sep=5cm] \cC^{(*,0)}_1\otimes \cC^{(*,0)}_2\ar[r, "F^{K_1}| F^{K_2}+\Tw_{[K_1]}\circ (F^{-K_1}| F^{-K_2})"] & \cC^{(*,1)}_1\otimes \cC^{(*,1)}_2,
 \end{tikzcd}
 \label{eq:alternate-pairing-theorem}
\end{equation}
where
\[
\Tw_{[K_1]}=\id+  \scV^{-1}(\Phi_{w_1}|1+1|\Phi_{w_2})\circ (\cA_{[K_1]} | 1)
\]
and $\cC^{(*,\veps)}_i$ denotes $\cC^{(*,\veps)}(L_i)$.
\end{thm}

In the above theorem, the maps $\Phi_{w_i}$ denote the analogs of the basepoint actions for the hypercubes $\cC^{(*,1)}(L_i)$. These maps are defined similarly to basepoint actions on the ordinary link Floer complexes, except are defined in the setting of hypercubes. Note that they are morphisms of hypercubes, so will generally have non-trivial components of length greater than zero (in particular, $\Phi_{w_i}$ is not the same as the internal basepoint action on $\cCFL(L_i)$). The map $\cA_{[K_1]}$ is the hypercube homology action of the curve $K_1\subset \Sigma_1$.  See \cite{ZemBordered}*{Sections~13.2 and 13.3} for more detail on these constructions.

The map $\cA_{[K_1]}$ appearing in Equation~\eqref{eq:alternate-pairing-theorem} has another description, which is helpful from an algebraic perspective. According to \cite{ZemBordered}*{Lemma~13.28}, there is a chain homotopy
\begin{equation}
\cA_{[K_1]}\simeq \scU\Phi_{w_1}+\scV \Psi_{z_1}, \label{eq:homology-action-basepoint}
\end{equation}
where we view both maps as being endomorphisms of the hypercube $\cC^{(*,1)}(L_1)$. In the above, we are writing $\Phi_{w_1}$ and $\Psi_{z_1}$ for the algebraic basepoint action of the basepoints of $K_1$ on the hypercube $\cC^{(*,1)}(L_1)$.

We can translate Theorem~\ref{thm:pair-of-pants-on-hypercube} into a statement about bimodules over the surgery algebras. See \cite{ZemBordered}*{Section~15.2} for more details. We will define two modules ${}_{\cK|\cK}W_r^{\cK}$ and ${}_{\cK|\cK} W_l^{\cK}$ similarly to the merge module ${}_{\cK|\cK} M^{\cK}$ from Section~\ref{sec:merge}. We define them to have the same underlying vector space as the merge module. The module ${}_{\cK|\cK} W_r^{\cK}$ has $\delta_2^1$ and $\delta_3^1$ identical to the merge module. Additionally, there is a $\delta_5^1$, as follows:
\begin{equation}
\delta_5^1(a|b, a'|b', 1|\tau,\tau|1, i_0)=i_1\otimes \scV^{-1}\d_{\scU}(ab) a' \left(\scU \d_{\scU}+\scV \d_{\scV}\right)(b') \tau.  \label{eq:delta-5-1-merge}
\end{equation}
In the above, $\d_{\scU}$ and $\d_{\scV}$ denote the derivatives with respect to $\scU$ and $\scV$, respectively. The module ${}_{\cK|\cK}W_l^{\cK}$ is similar, but has the role of the two tensor factors switched. Theorem~\ref{thm:pair-of-pants-on-hypercube} has the following type-$D$ analog:

\begin{thm}[\cite{ZemBordered}*{Theorem~15.2}] Suppose that $L_1,L_2\subset S^3$ are two framed links with systems of arcs $\scA_1$ and $\scA_2$, respectively. Suppose also that we form $L_1\# L_2$ by taking the connected sum along components $K_1\subset L_1$ and $K_2\subset L_2$. Suppose that the arc for $K_1$ is alpha-parallel, and the arc for $K_2$ is beta-parallel. Let $\scA_l$ denote the system of arcs on the connected sum which coincides with $\scA_1$ and $\scA_2$ away from $K_1\# K_2$ and is alpha-parallel on $K_1\#K_2$. Let $\scA_r$ denote the analogous system of arcs on $L_1\#L_2$ which is instead beta-parallel on $K_1\# K_2$. Then
\[
\cX_{\Lambda_1+\Lambda_2}(L_1\#L_2,\scA_r)^{\cL_{\ell_1+\ell_2-1}}\simeq (\cX_{\Lambda_1}(L_1,\scA_1)^{\cL_{\ell_1}}, \cX_{\Lambda_2}(L_2,\scA_2)^{\cL_{\ell_2}})\hatbox {}_{\cK|\cK} W_{r}^{\cK},
\] 
and similarly if $\scA_{r}$ and $W_r$ are replaced by $\scA_{l}$ and $W_l$.
\end{thm}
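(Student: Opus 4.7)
The plan is to derive this as a corollary of Theorem~\ref{thm:pairing-main} combined with the arc-change framework of \cite{ZemBordered}*{Section~13}. The arc systems $\scA_{1\#2}$ and $\scA_r$ (respectively $\scA_l$) differ only on the single component $K_1\#K_2$: $\scA_{1\#2}$ uses the co-core of the connected-sum band, $\scA_r$ uses a beta-parallel arc, and $\scA_l$ uses an alpha-parallel arc. Thus the two systems are related by a single-component arc change on $K_1\#K_2$, and the theorem reduces to computing the composition of the merge bimodule $M_2$ with an appropriate arc-change bimodule.

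First I would invoke Theorem~\ref{thm:pairing-main} in its type-$D$ formulation to obtain
\[
\cX_{\Lambda_1+\Lambda_2}(L_1\#L_2,\scA_{1\#2})^{\cL_{\ell_1+\ell_2-1}}\simeq \left(\cX_{\Lambda_1}(L_1,\scA_1)^{\cL_{\ell_1}} \otimes_{\bF} \cX_{\Lambda_2}(L_2,\scA_2)^{\cL_{\ell_2}}\right)\hatbox {}_{\cK\otimes \cK} M_2^{\cK}.
\]
Next, I would apply the type-$D$ refinement of the arc-change theorem \cite{ZemBordered}*{Proposition~13.2} to produce a type-$DA$ bimodule over $\cK$ (acting only on the $K_1\#K_2$ tensor factor of $\cL$, and as the identity on the other factors) which converts $\scA_{1\#2}$ into $\scA_r$, giving a homotopy equivalence
\[
\cX_{\Lambda_1+\Lambda_2}(L_1\#L_2,\scA_r)^{\cL}\simeq \cX_{\Lambda_1+\Lambda_2}(L_1\#L_2,\scA_{1\#2})^{\cL}\hatbox (\text{arc-change bimodule}),
\]
and an analogous equivalence for $\scA_l$.

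The final and most delicate step is the algebraic identification
\[
{}_{\cK\otimes \cK} M_2^{\cK}\hatbox (\text{arc-change bimodule to beta-parallel})\simeq {}_{\cK\otimes \cK} W_r^{\cK},
\]
and analogously for $W_l$. The $\delta_2^1$ and $\delta_3^1$ terms of $W_r$ should descend directly from those of $M_2$, since the arc-change bimodule acts by the identity on the idempotent sector where these contribute. The new $\delta_5^1$ term of~\eqref{eq:delta-5-1-merge}, with its derivative factors $\d_{\scU}$ and $\d_{\scV}$, is expected to arise from the interaction between the $\tau$-outputs of $M_2$ and the higher structure maps of the arc-change bimodule, in which the basepoint actions $\Phi_{w_i}$ and $\Psi_{z_i}$ (which differentiate algebra elements with respect to $\scU_i$ and $\scV_i$) play the central role. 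I expect this last identification to be the main obstacle: one must carefully track the idempotent structure and the basepoint-action terms in the arc-change bimodule, and verify, after suitable cancellations and homotopies, that the composite agrees on the nose with the formula~\eqref{eq:delta-5-1-merge}. Once the bimodule identity is established, concatenating the three equivalences yields the stated result.
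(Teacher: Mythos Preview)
The paper does not give its own proof of this statement; it is quoted verbatim as \cite{ZemBordered}*{Theorem~15.2} and used as a black box. So there is no in-paper proof to compare against.

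That said, your outline matches precisely the way the paper motivates the pair-of-pants bimodules: it says explicitly that ``taking the connected sum and then changing the arc for $K_1\# K_2$ can be encoded by one of two bimodules'' $W_r$ and $W_l$. Your three-step plan (apply Theorem~\ref{thm:pairing-main} to get $M_2$, apply the arc-change bimodule of \cite{ZemBordered}*{Section~13} on the $K_1\#K_2$ factor, then identify the composite with $W_r$ or $W_l$) is exactly the intended logical structure, and the paper's subsequent discussion around Equations~\eqref{eq:alternate-pairing-theorem} and~\eqref{eq:homology-action-basepoint} confirms that the extra $\delta_5^1$ terms do indeed arise from the basepoint actions $\Phi_{w_i}$, $\Psi_{z_i}$ coming from the arc-change formula. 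Your assessment that the bimodule identification ${}_{\cK\otimes\cK}M_2^{\cK}\hatbox(\text{arc-change})\simeq {}_{\cK\otimes\cK}W_r^{\cK}$ is the substantive step is accurate; this is the computation carried out in \cite{ZemBordered}*{Section~15}, not in the present paper.
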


\subsection{The Hopf link surgery complex}
\label{sec:Hopf-link}
We now recall the link surgery hypercube for the Hopf link, which was computed in \cite{ZemBordered}*{Section~16}. We recall the negative Hopf link has the following link Floer complex.
 \begin{equation}
\cCFL(H)\iso \begin{tikzcd}[labels=description,row sep=1cm, column sep=1cm] \ve{a}& \ve{b}\ar[l, "\scU_2"] \ar[d, "\scU_1"]\\
\ve{c}\ar[r, "\scV_2"] \ar[u, "\scV_1"]& \ve{d}.
\end{tikzcd}
\label{eq:Hopf-def}
\end{equation}

 There are two models for the Hopf link surgery hypercube, depending on the choice of arc system. The models are summarized in the following result:
 \begin{prop}[\cite{ZemBordered}*{Proposition~16.1}]
 \label{prop:Hopf-link-large-model} Write $H=K_1\cup K_2$ for the negative Hopf link, and suppose that $\Lambda=(\lambda_1,\lambda_2)$ is an integral framing $H$. Let $\scA$ be a system of arcs for $H$ where both components are alpha-parallel or both components are beta-parallel. The maps in the surgery complex $\cC_{\Lambda}(H,\scA)$ are, up to overall homotopy equivalence, as follows:
 \begin{enumerate}
  \item The map $\Phi^{K_1}$ is the canonical inclusion of localization. The map $\Phi^{-K_1}$ is given by the following formula:
  \[
 \Phi^{-K_1} =\begin{tikzcd}[row sep=0cm]
 \ve{a} \ar[r,mapsto]& \ve{d}\scV_1^{\lambda_1-1}\\
 \ve{b} \ar[r, mapsto]& 0\\
 \ve{c} \ar[r, mapsto]& \ve{b}\scV_1^{\lambda_1+1}+\ve{c}\scV_1^{\lambda_1}\scU_2\\
 \ve{d} \ar[r,mapsto]& \ve{d}\scV_1^{\lambda_1}\scU_2.
 \end{tikzcd}
  \]
 \item The map $\Phi^{K_2}$ is the canonical inclusion of localization, and $\Phi^{-K_2}$ is given by the following formula:
\[
\Phi^{-K_2}=\begin{tikzcd}[row sep=0cm]
\ve{a}\ar[r, mapsto] &\ve{a}\scU_1\scV_2^{\lambda_2}\\
 \ve{b}\ar[r, mapsto] &0\\
 \ve{c}\ar[r, mapsto] &\ve{b}\scV_2^{\lambda_2+1}+\ve{c}\scU_1\scV_2^{\lambda_2} \\
 \ve{d}\ar[r, mapsto]&\ve{a}\scV_2^{\lambda_2-1}.
 \end{tikzcd}
\]
 \item The length 2 map $\Phi^{-K_1\cup -K_2}$ is given by the following formula:
 \[
\Phi^{-K_1\cup -K_2}=\begin{tikzcd}[row sep=0cm]
 \ve{a}\ar[r, mapsto] &\ve{c}\scV_1^{\lambda_1-2}\scV_2^{\lambda_2-1}\\
 \ve{b}\ar[r, mapsto] &0\\
  \ve{c}\ar[r, mapsto] &\ve{d}\scV_1^{\lambda_1-1} \scV_2^{\lambda_2}\\
 \ve{d}\ar[r, mapsto]&\ve{c}\scV_1^{\lambda_1-1} \scV^{\lambda_2-2}_2.
 \end{tikzcd}
 \]
 The length 2 maps for other orientations of the Hopf link vanish.
 \end{enumerate}
 \end{prop}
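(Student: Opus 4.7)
The plan is to compute the surgery hypercube $\cC_{\Lambda}(H,\scA)$ directly from a specific meridional basic system of Heegaard diagrams, following the construction recalled in Section~\ref{sec:basic-systems}. First I would pick a minimal genus Heegaard link diagram for $H \subset S^3$ in which both components of $\scA$ are alpha-parallel; the case where both are beta-parallel follows by the evident symmetry swapping the roles of $\as$ and $\bs$. Identify the four intersection points representing $\ve{a},\ve{b},\ve{c},\ve{d}$ and verify the link Floer differentials shown in~\eqref{eq:Hopf-def}.

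Next, I would build the hyperbox $\cL_{\b}$ of beta attaching curves by iterating, in each axis direction, the surface isotopy and sequence of handleslides of the special meridian $\b_i^s$ across the other beta curves of the component $B_i \subset \Sigma \setminus \bs$ containing $w_i$ and $z_i$, returning $\b_i^s$ to its original position while avoiding $w_i$. The hyperbox $\cL_{\a}$ is obtained by small Hamiltonian translations of the alpha curves. Pairing these and compressing gives the hypercube of chain complexes underlying $\cC_{\Lambda}(H,\scA)$.

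Third, I would read off the length one maps. By construction, each $\Phi^{K_i}$ is the canonical inclusion of localization. For $\Phi^{-K_i}$, I would enumerate the triangles contributing to the compressed length one map along the $i$-th axis; the Alexander grading shift from~\eqref{eq:grading-link-surgery} together with the small rank of $\cCFL(H)$ severely restricts the output, and the symmetry interchanging the two components relates the formulas for $\Phi^{-K_1}$ and $\Phi^{-K_2}$ under $\lambda_1 \leftrightarrow \lambda_2$ with a corresponding swap of the generators and variables.

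Finally, for the length two map $\Phi^{-K_1 \cup -K_2}$: the vanishing of the length two maps for the other three sublink orientations is automatic from the definition in Section~\ref{sec:basic-systems}, since any such sublink contains a component coherently oriented with $L$. The nontrivial map $\Phi^{-K_1 \cup -K_2}$ is computed by enumerating rectangles (and higher polygons resulting from compression of the hyperbox); its general form is pinned down by the grading shift $\Lambda_{H,-K_1 \cup -K_2}$, and the remaining coefficients are determined by the hypercube relations for $(\veps,\veps'')=((0,0),(1,1))$ combined with the length one maps. The main obstacle is precisely this rectangle enumeration, where hyperbox compression introduces higher polygons that are combinatorially delicate; however, the grading constraint and hypercube relations essentially force the stated formula once the length one maps are in place.
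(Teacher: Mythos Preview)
This proposition is not proved in the present paper: it is quoted verbatim from \cite{ZemBordered}*{Proposition~16.1}, and the paper gives no argument for it beyond the citation. There is therefore no ``paper's own proof'' to compare your attempt against.

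That said, your outline is a reasonable sketch of how such a computation would proceed in the cited reference, with one caveat. Your final step asserts that, once the length one maps are known, the grading shift $\Lambda_{H,-K_1\cup -K_2}$ together with the hypercube relation at $((0,0),(1,1))$ ``essentially force'' the formula for $\Phi^{-K_1\cup -K_2}$. This is optimistic: the hypercube relation only determines $[\d,\Phi^{-K_1\cup -K_2}]$, so it pins down the length two map only up to adding a chain map, and not every such modification yields a homotopy equivalent hypercube. In practice one must either carry out the holomorphic rectangle count you flag as the ``main obstacle,'' or invoke a further structural argument (e.g.\ a model computation or a rigidity statement for the relevant Floer groups) to eliminate the remaining ambiguity. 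Your proposal correctly identifies this as the delicate point but does not resolve it.
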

 The above formulas are stated only for the values of the maps on the generators $\ve{a}$, $\ve{b}$, $\ve{c}$ and $\ve{d}$. Values on the rest of the complex are determined by the equivariance properties of the maps proven in \cite{ZemBordered}*{Lemma~7.7}.

  \begin{prop}[\cite{ZemBordered}*{Proposition~16.7}] Let $\bar\scA$ be a system of arcs on the Hopf link $H$ where one arc is alpha-parallel and the other arc is beta-parallel. The surgery complex for the Hopf link $\cC_{\Lambda}(H,\bar\scA)$ is identical to the one in Proposition~\ref{prop:Hopf-link-large-model}, except that we delete the term $\ve{c}\mapsto  \ve{d} \scV_1^{\lambda_1-1} \scV_2^{\lambda_2} $ from the expression in $\Phi^{-K_1\cup -K_2}$.
  \end{prop}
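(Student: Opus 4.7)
The plan is to derive the proposition from Proposition~\ref{prop:Hopf-link-large-model} using the arc-change bimodule ${}_\cK \cT^\cK$ of \cite{ZemBordered}*{Section~14}. Starting with an arc system $\scA$ in which both components of the Hopf link $H$ are alpha-parallel, the mixed arc system $\bar\scA$ differs from $\scA$ only on one component, say $K_1$, where we replace the alpha-parallel arc with a beta-parallel arc. The type-$D$ refinement of the arc-change theorem, \cite{ZemBordered}*{Proposition~13.2}, then gives a homotopy equivalence identifying $\cX_\Lambda(H,\bar\scA)^{\cL_2}$ with the box tensor product of $\cX_\Lambda(H,\scA)^{\cL_2}$ with ${}_\cK\cT^\cK$ on the $K_1$ factor and the identity bimodule on the $K_2$ factor. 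The task is thus reduced to computing this tensor product.

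I would first translate Proposition~\ref{prop:Hopf-link-large-model} into the type-$D$ module $\cX_\Lambda(H,\scA)^{\cL_2}$ described in Section~\ref{sec:surgery-algebra}, with the $\scU$-, $\scV$-, $\sigma$- and $\tau$-weighted arrows determined by the length-zero, length-one, and length-two hypercube maps. Box-tensoring with $\cT$ on the $K_1$ factor then has three types of effects: preserving the internal differential and the $K_2$-direction hypercube maps (which receive no contribution from $\cT$); reproducing the length-one maps $\Phi^{\pm K_1}$ verbatim, since $\cT$ intertwines the $\sigma$ and $\tau$ actions up to corrections that do not affect the leading order; and modifying $\Phi^{-K_1\cup-K_2}$ by a composition of the higher $A_\infty$ terms of $\cT$ with the $\tau$-weighted summands of $\Phi^{-K_1}$ followed by $\Phi^{-K_2}$. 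This last composition must be computed explicitly and shown to precisely cancel the summand $\ve{c}\mapsto \scV_1^{\lambda_1-1}\scV_2^{\lambda_2}\ve{d}$ while leaving the other three entries of the length-two map untouched.

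The main obstacle is the combinatorial bookkeeping for these length-two contributions: one must enumerate all compositions through $\cT$ that produce a length-two output, verify the cancellations, and check that no spurious length-three or higher terms remain after homotopy simplification. A more diagrammatic alternative is to redo the Heegaard-theoretic computation of \cite{ZemBordered}*{Section~16} with a meridional basic system of Heegaard diagrams adapted to $\bar\scA$, as in Section~\ref{sec:basic-systems}. In such a diagram the special meridional curves of $K_1$ and $K_2$ lie in opposite tori of the handlebody structure, so the holomorphic rectangle which produced the $\ve{c}\mapsto \scV_1^{\lambda_1-1}\scV_2^{\lambda_2}\ve{d}$ contribution in the symmetric case is no longer supported, while all other polygon counts match those of Proposition~\ref{prop:Hopf-link-large-model}. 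Either route yields the claimed formula, but I expect the diagrammatic approach to be cleaner, since the asymmetry between the $\a$- and $\b$-sides is geometrically manifest.
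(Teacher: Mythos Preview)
The paper does not prove this proposition; it is stated without argument as a companion to Proposition~\ref{prop:Hopf-link-large-model}, which is quoted from \cite{ZemBordered}*{Proposition~16.1}, and the mixed-arc computation is carried out in the same reference. There is therefore no in-paper proof to compare your proposal against.

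That said, both routes you outline are the natural ones and are consistent with how \cite{ZemBordered} is organized: the direct Heegaard-diagrammatic count in a basic system adapted to $\bar\scA$ is the approach of \cite{ZemBordered}*{Section~16}, while the transformer-bimodule route is available once ${}_\cK\cT^\cK$ has been set up in \cite{ZemBordered}*{Section~14}. What you have written, however, is a plan rather than a proof. The crucial assertion---that the higher actions of $\cT$ composed with the $\tau$-weighted parts of $\Phi^{-K_1}$ and $\Phi^{-K_2}$ cancel exactly the single term $\ve{c}\mapsto\scV_1^{\lambda_1-1}\scV_2^{\lambda_2}\ve{d}$ and nothing else---is the entire content of the proposition, and you have not verified it. Likewise, in the diagrammatic alternative, the claim that ``the holomorphic rectangle \ldots\ is no longer supported'' is precisely what must be checked by enumerating domains on the genus-two diagram. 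Either computation is short but must actually be done.
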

 
 The above descriptions of $\cC_{\Lambda}(H,\scA)$ and $\cC_{\Lambda}(H,\bar \scA)$ may be repackaged as type-$D$ modules over the algebra $\cK\otimes_{\bF} \cK$. See \cite{ZemBordered}*{Section~8.6}.
  We write  $\cH_{\Lambda}^{\cK\otimes \cK}$
  and $\bar \cH_{\Lambda}^{\cK\otimes \cK}$ for these two surgery complexes.
  
 It is convenient to consider the type-$DA$ versions of the Hopf link complexes, and we set
 \[
 {}_{\cK} \cH_{\Lambda}^{\cK}:= \cH_{\Lambda}^{\cK\otimes \cK}\hatbox {}_{\cK| \cK} [\bI^{\Supset}],
 \]
 where the tensor product is taken on a single algebra factor. We define ${}_{\cK} \bar{\cH}_{\Lambda}^{\cK}$ analogously.

\begin{rem}
 The Hopf link $DA$-bimodule is related to the dual knot surgery formula of Hedden--Levine \cite{HeddenLevineSurgery} and Eftekhary \cite{EftekharyDuals}. Indeed, tensoring $\cX_n(K)^{\cK}$ with ${}_{\cK} \cH_{\Lambda}^{\cK}$ has the effect of computing their dual knot surgery formula.
 \end{rem}

\section{Homological actions on the surgery complex}
\label{sec:homology-action}

In this section, we study several endomorphisms of the link surgery hypercube which are related to the standard $\Lambda^* (H_1(Y)/\Tors)$ action on $\HF^-(Y)$. In Section~\ref{sec:algebraic-action-link-surgery}, we study one endomorphism of the surgery cube which is obtained by summing over a subset of the structure maps. In Section~\ref{sec:diagrammatic-action}, we study an endomorphism of $\cC_{\Lambda}(L)$ induced by a closed curve $\g\subset \Sigma$. This morphism is computed by counting holomorphic polygons with certain weights. In Section~\ref{sec:relating-actions}, we relate these actions for meridional $\sigma$-basic systems. We call these the \emph{algebraic} and \emph{diagrammatic} actions, respectively.

In Section~\ref{sec:simplifying-connected-sum-1}, we describe an application of these results to simplify the connected sum formula in certain cases.

\subsection{A first homology action}
\label{sec:algebraic-action-link-surgery}

We now define an action of 
\[
\Lambda^* (H_1(S^3_{\Lambda}(L))/\Tors)
\]
 on $\cC_{\Lambda}(L).$ Write $H_1(S^3_{\Lambda}(L))=\Z^\ell/\im \Lambda$, where $\Z^\ell$ denotes the free abelian group generated by the meridians $\mu_i$ of the components of $L$.

 Define an endomorphism $F^{K_i}$ of  $\cC_{\Lambda}(L)$ to be the sum of all hypercube structure maps for oriented sublinks $\vec{M}\subset L$ which contain $+K_i$. Define $F^{-K_i}$ similarly. We define an endomorphism $\frA_{\mu_i}$ of $\cC_{\Lambda}(L)$ as
\[
\frA_{\mu_i}=F^{K_i}.
\]
We think of $[\mu_i]$ as the element $(0,\dots, 1, \dots,0)$ in $\Z^\ell/\im \Lambda\iso H_1(S^3_{\Lambda}(L))$.

 Note that $F^{K_i}\simeq F^{-K_i}$ as endomorphisms of $\cC_{\Lambda}(L)$, since a chain homotopy is given by projecting to the codimension 1 subcube with $K_i$-coordinate $0$.

 For a general $\g\in H_1(S^3_\Lambda(L))$, write $\g=a_1 \cdot \mu_1+\cdots
+ a_\ell \cdot \mu_\ell$ and define
\begin{equation}
\frA_{\g}:=a_1 \cdot \frA_{\mu_1}+\cdots +a_\ell\cdot \frA_{\mu_\ell}.\label{eq:hypercube-homology-action}
\end{equation}
We now prove that this definition gives a well-defined action of $\Lambda^* H_1(S^3_{\Lambda}(L))/\Tors$ on $\cC_{\Lambda}(L)$.

\begin{lem}\label{lem:algebraic-homology-action-link-surgery} Given $\g\in H_1(S^3_\Lambda(L))/\Tors$, the endomorphism $\frA_{\g}$ of $\cC_{\Lambda}(L)$ is well-defined up to $\bF\llsquare U_1,\dots, U_\ell\rrsquare $-equivariant chain homotopy.
\end{lem}
\begin{proof} We recall that $H_1(S^3_{\Lambda}(L))$ is isomorphic to $\Z^\ell/\im \Lambda$, where we view $\Lambda$ as the $\ell\times \ell$ symmetric matrix whose diagonal entries consist of the framings, and whose off diagonal entries consist of the linking numbers between components of $L$. Write $\Lambda_1,\dots, \Lambda_\ell$ for the columns of $L$. 

Write $\g=a_1\mu_1+\cdots +a_\ell \mu_\ell$, viewed as an element of $\Z^\ell$.  Note that $\g\in \Z^\ell$ becomes 0 in $H_1(S^3_\Lambda(L))/\Tors$ if and only if $N\cdot \g\in \im \Lambda$ for some $N\in \Z$.
 We will construct a null-homotopy of the map $\frA_{\g}$ from Equation~\eqref{eq:hypercube-homology-action} whenever $\g=0\in H_1(S^3_{\Lambda}(L))/\Tors$.

We first construct a function $\eta_\g\colon \bH(L)\to \bZ$ satisfying
\begin{equation}
\eta_\g(\ve{s}+\Lambda_i)=\eta_\g(\ve{s})-a_i. \label{eq:coherence-eta}
\end{equation}
To construct such an $\eta_\g$, we pick representatives in $\bH(L)$ of each class in $\bH(L)/\im \Lambda$ and define $\eta_\g$ arbitrarily on these elements. We extend $\eta_\g$ to all of $\bH(L)$ using Equation~\eqref{eq:coherence-eta}. To see that the resulting map $\eta_\g$ is well-defined, it suffices to show that if 
\begin{equation}
j_1 \Lambda_1+\cdots+j_\ell \Lambda_\ell=0,\label{eq:sum-j-Lambda}
\end{equation}
 then
 \[
 j_1 a_1+\cdots+ j_\ell a_\ell=0.
 \] 
To see this, observe that Equation~\eqref{eq:sum-j-Lambda} implies that $(j_1,\dots, j_\ell)$ is in the null-space of $\Lambda$, so in particular its dot product with $(a_1,\dots,a_\ell)$ will vanish, since $(a_1,\dots, a_\ell)^T$ is in the rational image of $\Lambda$. Therefore an $\eta_\g$ satisfying Equation~\eqref{eq:coherence-eta} exists.

We now define the function
$\omega_\g\colon \bH(L)\times \bE_\ell\to \Z$
by the formula
\[
\omega_\g(\ve{s}, \veps)=\eta_\g(\ve{s})+\veps_1\cdot a_1+\cdots +\veps_\ell \cdot a_\ell.
\]
We observe that $\omega_\g$ satisfies the formula 
\begin{equation}
\omega_\g(\ve{s}, \veps+e_i)=a_{i}+\omega_\g(\ve{s},\veps)\quad \text{and} \quad \omega_\g(\ve{s}+\Lambda_{i},\veps+e_i)=\omega_\g(\ve{s},\veps). 
\label{eq:requirements-omega}
\end{equation}

We now define a homotopy $H\colon \cC_{\Lambda}(L)\to \cC_{\Lambda}(L)$ as follows. If $\ve{s}\in \bH(L)$ and $\veps\in \bE_\ell$, write $\cC_{\veps}(\ve{s})\subset \cC_{\veps}\subset \cC_{\Lambda}(L)$ for the subspace in Alexander grading $\ve{s}$. We define $H$ on $\cC_{\veps}(\ve{s})$ via the formula
\[
H(\xs)=\omega_\g(\ve{s},\veps)\cdot \xs.
\]
We observe that
 \[
 \frA_{\g}=[D, H],
 \]
 where $D$ is the hypercube differential on $\cC_{\Lambda}(L)$. The proof is complete.
\end{proof}

\begin{rem} It is also straightforward to verify that the above action descends to an action of $\Lambda^* \left(H_1(S^3_{\Lambda}(L))/\Tors\right)$. This may be verified by noting that for each $i$ we have $\frA_{\mu_i}^2=0$, and for all $i,j$ we have $[\frA_{\mu_i},\frA_{\mu_j}]=0$.
\end{rem}

\begin{rem}
\label{rem:subcube-refinement}
It is helpful to have the following refinement of Lemma~\ref{lem:algebraic-homology-action-link-surgery} for subcubes of $\cC_{\Lambda}(L)$. Suppose that $L$ is partitioned as $L_0\cup L_1$ and let $\veps\in \bE_{\ell_0}$ be a fixed coordinate, where $\ell_i=|L_i|$. Consider the subcube $\cC_{\Lambda_0}^{(*,\veps)}(L_0;L_1)\subset \cC_{\Lambda}(L)$ generated by complexes at points $(\nu,\veps)\in \bE_{\ell_0}\times \bE_{\ell_1}$ where $\veps$ is our chosen coordinate (above) and $\nu$ is any coordinate. Here $\Lambda_0$ is the restriction of $\Lambda$ to $L_0$. The complex $\cC_{\Lambda_0}^{(*,\veps)}(L_0;L_1)$ is a module over $\bF\llsquare \scU_{\ell_0+1},\scV_{\ell_0+1},\dots, \scU_{\ell_0+\ell_1},\scV_{\ell_0+\ell_1}\rrsquare $ (the variables for $L_1$). Furthermore, by \cite{ZemBordered}*{Lemma~7.7}, the differential on $\cC_{\Lambda_0}^{(*,\veps)}(L_0;L_1)$ commutes with the action of $\bF\llsquare \scU_{\ell_0+1},\scV_{\ell_0+1},\dots, \scU_{\ell_0+\ell_1},\scV_{\ell_0+\ell_1}\rrsquare $.  If $\g\in \Z^{\ell_0}$, then we may define an endomorphism $\frA_{\g}$ on $\cC_{\Lambda_0}^{(*,\veps)}(L_0;L_1)$ using  Equation~\eqref{eq:hypercube-homology-action}. Similarly to Lemma~\ref{lem:algebraic-homology-action-link-surgery}, the map $\frA_{\g}$ gives an action of $H_1(S^3_{\Lambda_0}(L_0))/\Tors$ which is well-defined up to $\bF\llsquare \scU_{\ell_0+1},\scV_{\ell_0+1},\dots, \scU_{\ell_0+\ell_1},\scV_{\ell_0+\ell_1}\rrsquare $-equivariant chain homotopy. To ensure the $\bF\llsquare \scU_{\ell_0+1},\scV_{\ell_0+1},\dots, \scU_{\ell_0+\ell_1},\scV_{\ell_0+\ell_1}\rrsquare $-equivariance of the homotopy, we add to Equation~\eqref{eq:requirements-omega} the requirement that if $e_i\in \Z^{\ell_0+\ell_1}$ is a unit vector pointing in the direction for $K_i\subset L_1$, then 
\[
\omega_\g(\ve{s}\pm e_i,\veps)=\omega_\g(\ve{s}, \veps).
\]
Since $\scU_i$ and $\scV_i$ have Alexander grading $\pm e_i$, this ensures that our chain homotopies will be $\bF\llsquare \scU_{\ell_0+1},\scV_{\ell_0+1},\dots, \scU_{\ell_0+\ell_1},\scV_{\ell_0+\ell_1}\rrsquare $-equivariant. Noting that there is an affine isomorphism $\bH(L)/\Z^{\ell_1}\iso \bH(L_0)$ (where $\Z^{\ell_1}$ acts by the meridians of the components of $L_1$), the proof in the absolute case goes through without change.
\end{rem}

Finally, we remark that in the subsequent work \cite{ZemGenSurgery}*{Section~11}, we prove that the  action $\frA_{\g}$ coincides with the $\Lambda^* H_1(S^3_{\Lambda}(L))/\Tors$-action on $\ve{\CF}^-(S^3_{\Lambda}(L))$. We do not use that fact in this paper.

\subsection{A second homology action}
\label{sec:diagrammatic-action}
In this section, we define another homology action on the link surgery formula.  This construction is defined in  \cite{ZemBordered}*{Section~13.3} (cf. \cite{HHSZNaturality}*{Section~6.2}). This version of the homology action will take the form of an endomorphism
\[
\cA_{\g}\colon \cC_{\Lambda}(L)\to \cC_{\Lambda}(L)
\]
associated to an immersed 1-chain $\g$ on the Heegaard surface $\Sigma$. This description parallels the construction of the ordinary $\Lambda^* H_1(Y)/\Tors$ action on Heegaard Floer homology \cite{OSDisks}*{Section~4.2.5}. 

 Suppose $L\subset S^3$ is a framed link with a system of arcs $\scA$.  Let $\scH$ be a $\sigma$-basic system of Heegaard diagrams for $(L,\scA)$. Let $\Sigma$ be the underlying Heegaard surface of $\scH$, and let us also assume that the arcs $\scA$ are embedded in $\Sigma$. Let $\g\subset \Sigma$ be a closed curve which is disjoint from the arcs of $\scA$. We define an endomorphism
\[
\cA_{\g}\colon \cC_{\Lambda}(L,\scA)\to \cC_{\Lambda}(L,\scA).
\]
The endomorphism $\cA_\g$ is induced by a type-$D$ endomorphism  $\cA_{\g}^1$ of $\cX_{\Lambda}(L,\scA)^{\cL}$.

The construction of $\cA_\g$ is as follows.   The hypercube $\cC_{\Lambda}(L,\scA)$ is built from an $|L|$-dimensional hyperbox of chain complexes. Each constituent hypercube is obtained by pairing two hypercubes of attaching curves, of combined total dimension at most $|L|$, and then extending the remaining axis directions by canonical diffeomorphism maps for surface isotopes of $\Sigma$ which push basepoints along subarcs of the curves in $\scA$.

Suppose a subcube of this hyperbox is formed by pairing hypercubes of attaching curves $\cL_{\a}$ and $\cL_{\b}$, of dimension $n$ and $m$, respectively. The homology action
\[
\cA_{\g}\colon \ve{\CF}^-(\cL_{\a},\cL_{\b})\to \ve{\CF}^-(\cL_{\a},\cL_{\b})
\]
is defined similarly to the hypercube differential in Equation~\eqref{eq:hypercube-differential}, except that a holomorphic polygon representing a class $\psi$ is weighted by a factor of
\[
\# (\d_{\a}(\psi)\cap \g):= \sum_{\a_\veps\in \cL_{\a}} \#(\d_{\a_\veps}(\psi)\cap \g).
\]
Here, if $\as_\veps\in \cL_{\a}$, then $\d_{\a_\veps}(\phi)$ denotes the subset of the boundary of the domain of $\phi$ which lies on $\as_\veps$. The homology action $\cA_{\g}$ on the link surgery hypercube $\cC_{\Lambda}(L,\scA)$ is obtained by performing the above construction to each constituent hypercube of the link surgery complex, and modifying weights of the variables similarly to the construction in Section~\ref{sec:basic-systems}.

\begin{rem}
\label{rem:g-intersects-scA}
 When $\g$ is not disjoint from $\scA$, it is slightly harder to define the map $\cA_\g$ because the basepoint moving maps associated to $\scA$ will change the curve $\g$ to some curve $\g'$. To define $\cA_\g$ on the entire hypercube in this case, we would need to pick a canonical homotopy between homology actions of $\cA_\g$ and $\cA_{\g'}$ on subcubes. Choosing a canonical homotopy turns out to be a subtle question that we will not address in this paper.
\end{rem}

\subsection{Relating the actions}
\label{sec:relating-actions}

In this section, we relate the two actions $\cA_{\g}$ and $\frA_{\g}$ constructed in the previous sections. Our main result is the following:

\begin{prop}
\label{prop:homology-action-description}
 Consider a meridional $\sigma$-basic system of Heegaard diagrams for a link $L\subset S^3$ and a system of arcs $\scA$ for $L$, where each arc is either alpha-parallel or beta-parallel. Let $\Sigma$ be the underlying Heegaard surface, and let $\mu_1,\dots, \mu_\ell \subset \Sigma$ be translates of the special alpha and beta meridians.
\begin{enumerate}[label=($\cA$-\arabic*),ref=$\cA$-\arabic*]
\item \label{prop:homology-action-claim-1} Suppose that $\g\subset \Sigma$ is a closed curve which is disjoint from the arcs of $\scA$. The endomorphism $\cA_\g$ of $\cC_{\Lambda}(L,\scA)$ satisfies
\[
\cA_\g\simeq \sum_{i=1}^\ell \lk(\g, K_i)\cdot \cA_{\mu_{i}}
\]
where each $\mu_{i}$ is parallel to the canonical meridian of the component $K_i$ on the diagram $\scH$. The linking numbers are computed by viewing each $K_i$ as being transverse to $\Sigma$ (intersecting positively at $z_i$ and negatively at $w_i$) while viewing $\g$ as being immersed on $\Sigma\subset S^3$. The same claim also holds when we view $\cA_\g$ and $\cA_{\mu_i}$ as type-$D$ endomorphisms of $\cX_{\Lambda}(L,\scA)^{\cL}$.
\item\label{prop:homology-action-claim-2} For each component $K_i$ of $L$, there is a chain homotopy
\[
\cA_{\mu_{i}}\simeq \frA_{\mu_{i}}
\]
as endomorphisms of $\cC_\Lambda(L,\scA)$. The same claim holds when we view $\cA_{\mu_i}$ and $\frA_{\mu_i}$ as type-$D$ endomorphisms of $\cX_{\Lambda}(L,\scA)^{\cL}$.
\end{enumerate}
\end{prop}

\begin{rem} The statement is more subtle when $\g$ intersects the arcs $\scA$. For example, if we let $\g$ be a very small loop centered at $w_i$, then $\lk(K_i,\g)=\pm 1$ and $\lk(K_j,\g)=0$ for $i\neq j$. The above formula predicts $\cA_\g\simeq \frA_{\mu_i}$. Note however that our definition of the map $\cA_{\g}$ is not complete in this case, since $\g$ intersects an arc of $\scA$. Cf. Remark~\ref{rem:g-intersects-scA}. 
\end{rem}

We begin with a preliminary lemma:

\begin{lem}[\cite{ZemBordered}*{Lemma~13.16}]
\label{lem:homology-action-homotopy-hypercube}
Suppose that $\cL_{\a}$ and $\cL_{\b}$ are hypercubes of attaching curves on $(\Sigma,\ws,\zs)$ and that $\g$ is a closed 1-chain on $\Sigma$. Suppose that $C\subset \Sigma$ is an integral 2-chain such that $\d C=\g+S_{\a}+S_{\b}$ where $S_{\a}$ are closed 1-chains which are disjoint from all curves in $\cL_{\a}$ and $S_{\b}$ are closed 1-chains which are disjoint from all curves in $\cL_{\b}$. As an endomorphism of $\ve{\CF}^-(\cL_{\a},\cL_{\b})$ we have
\[
\cA_\g\simeq 0.
\]
\end{lem}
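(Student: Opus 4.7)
The plan is to exhibit an explicit null-homotopy $H$ of $\cA_\g$ built from local multiplicities of $C$, following the template of Ozsv\'{a}th and Szab\'{o}'s proof that the $H_1/\Tors$-action on $\HF^\infty$ is well-defined \cite{OSDisks}*{Section~4.2.5}. After a small Hamiltonian perturbation of the attaching curves so that every intersection generator avoids $\d C$, for each generator $\xs=\{x_1,\dots,x_g\}$ appearing in the pairing I would define
\[
n_\xs(C) := \sum_{i=1}^g n_{x_i}(C) \pmod 2,
\]
where $n_{x_i}(C)$ is the local multiplicity of $C$ near $x_i$. The null-homotopy $H\colon \ve{\CF}^-(\cL_\a,\cL_\b)\to \ve{\CF}^-(\cL_\a,\cL_\b)$ is the $\bF$-linear map acting as scalar multiplication by $n_\xs(C)$ on each such generator.

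The heart of the proof is a local combinatorial identity: for any polygon class $\psi$ appearing in the hypercube pairing differential, with incoming generator $\xs$, theta inputs $\Theta_1,\dots,\Theta_r$, and outgoing generator $\ys$,
\[
a(\g,\psi) \;\equiv\; n_\xs(C) + n_\ys(C) + \sum_k n_{\Theta_k}(C) \pmod 2.
\]
This follows by writing $\#(\d_\a D(\psi)\cap \g) = \#(\d_\a D(\psi)\cap \d C) - \#(\d_\a D(\psi)\cap S_\b)$ (using that $S_\a$ is disjoint from alpha curves), applying the standard identity which equates $\#(\d_\a D(\psi)\cap \d C)$ with a signed sum of the local multiplicities of $C$ at the corners of $D(\psi)$, and observing that the $S_\b$-correction is absorbed by the analogous identity for $\#(\d_\b D(\psi)\cap \g)$, whose contribution to $[D,H]$ sums to zero by the $A_\infty$-relations on the theta chains of $\cL_\b$ (which are disjoint from $S_\b$).

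Granting the identity, $[D,H](\xs)$ equals $\sum_\psi (n_\xs(C)+n_\ys(C))\,\ys = \cA_\g(\xs) + \sum_{\psi,k} n_{\Theta_k}(C)\,\ys$. When $C$ can be chosen so that $n_{\Theta_k}(C)=0$ for every theta chain $\Theta_k$---which one may arrange by modifying $C$ by boundary 2-chains on the alpha and beta curves, updating $S_\a$ and $S_\b$ correspondingly---the theta corrections vanish and $[D,H]=\cA_\g$. The main technical obstacle I expect is the bookkeeping needed when such a $C$ is not immediately available: one must enlarge $H$ inductively up the hypercube with higher-length correction terms that kill the theta contributions using the $A_\infty$-relations for $\cL_\a$ and $\cL_\b$, much as in the standard filling argument for homotopies of hypercube maps.
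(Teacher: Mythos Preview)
Your homotopy $H$ is exactly the one the paper uses, and the overall strategy is right. The problem is the key identity: your claimed relation
\[
a(\g,\psi)\equiv n_{\xs}(C)+n_{\ys}(C)+\sum_k n_{\Theta_k}(C)\pmod 2
\]
has spurious $\Theta$-terms. The correct computation gives simply $a(\g,\psi)\equiv n_{\xs}(C)+n_{\ys}(C)$, with no contribution from the $\Theta$-chains at all. The reason is that $\d_{\a}(\psi)$ is a $1$-chain on $\Sigma$ whose boundary, as a $0$-chain, consists only of the alpha--beta corners $\xs$ and $\ys$: at an alpha--alpha corner $\Theta_{\a_{\veps_k},\a_{\veps_{k-1}}}$ the arc on $\a_{\veps_k}$ ends and the arc on $\a_{\veps_{k-1}}$ begins at the \emph{same} point of $\Sigma$, so the two contributions to $\d(\d_{\a}(\psi))$ cancel. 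The beta--beta corners never touch $\d_{\a}(\psi)$ in the first place. Hence the standard duality $\#(\d_{\a}(\psi)\cap \d C)=n_{\d(\d_{\a}(\psi))}(C)$ yields only $n_{\xs}(C)+n_{\ys}(C)$. The paper then disposes of the $S_{\a}$ and $S_{\b}$ terms exactly as you indicate: $\d_{\a}(\psi)\cap S_{\a}=0$ by disjointness, and $\#(\d_{\a}(\psi)\cap S_{\b})=\#(\d_{\b}(\psi)\cap S_{\b})=0$ since $\d_{\a}(\psi)$ and $\d_{\b}(\psi)$ are homologous via $D(\psi)$.

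Once you see that the $\Theta$-contributions are absent, the argument is a one-line verification that $[D,H_C]=\cA_{\g}$; there is no need to arrange $n_{\Theta_k}(C)=0$ and no inductive filling is required. Your proposed workaround of modifying $C$ to kill the $\Theta$-multiplicities would in general not be possible (there are many $\Theta$-chains and only a few degrees of freedom in adjusting $C$), and the ``higher-length correction'' scheme you sketch, while plausible in spirit, is solving a problem that does not exist.
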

\begin{proof} The proof is given in \cite{ZemBordered}*{Lemma~13.16}, though we repeat it for the benefit of the reader. We construct the following diagram to realize the chain homotopy:
\begin{equation}
\begin{tikzcd} \ve{\CF}^-(\cL_{\a},\cL_{\b})
	\ar[r, "\cA_{\g}"]
	\ar[d, "\id"]
	\ar[dr, "H_C",dashed]
& \ve{\CF}^-(\cL_{\a},\cL_{\b})
	\ar[d, "\id"]
\\
\ve{\CF}^-(\cL_{\a},\cL_{\b})
	\ar[r, "0"]&
\ve{\CF}^-(\cL_{\a},\cL_{\b})
\end{tikzcd}
\label{eq:hypercube-A_gamma-null-homotopy}
\end{equation}
We define the map $H_C$ to have only length 2 chains in the above diagram, and to send $\xs\in \bT_{\a_\veps}\cap \bT_{\b_{\nu}}$ to  $n_{\xs}(C)\cdot \xs$, and to be $\bF\llsquare U_1,\dots, U_\ell\rrsquare $-equivariant. Here $n_{\xs}(C)\in \bF$ denotes the intersection number of $C\subset \Sigma$ with $\xs$, where we view $\xs$ as a collection of points on $\Sigma$. 

We claim that Equation~\eqref{eq:hypercube-A_gamma-null-homotopy} is a hypercube. It suffices to show the following equation whenever $(\nu,\veps)\le (\nu',\veps')$:
\begin{equation}
H_C\circ D_{(\nu,\veps),(\nu',\veps')}+ D_{(\nu,\veps),(\nu',\veps')}\circ H_C=(\cA_\g)_{(\nu,\veps),(\nu',\veps')}
\label{eq:homology-action-homotopy-hypercube}
\end{equation}
The maps $D_{(\nu,\veps),(\nu',\veps')}$ and $(\cA_\g)_{(\nu,\veps),(\nu',\veps')}$ count the same holomorphic polygons, but with different weights. To prove Equation~\eqref{eq:homology-action-homotopy-hypercube}, we observe that if 
\[
\psi\in \pi_2(\Theta_{\nu_n,\nu_{n-1}},\dots, \Theta_{\nu_2,\nu_1} ,\xs,\Theta_{\veps_1,\veps_2},\dots ,\Theta_{\veps_{m-1},\veps_m},\ys)
\]
 is a class of $(n+m)$-gons, then
\[
0\equiv\#\d(\d_{\a}(\psi)\cap C)\equiv n_{\ve{x}}(C)+n_{\ve{y}}(C)+\#\d_{\a}(\psi)\cap (\g+S_{\a}+S_{\b}) \pmod 2,
\]
We note that $\d_{\a}(\psi)\cap S_{\a}=\d_{\b}(\psi)\cap S_{\b}=\emptyset$. Furthermore $\d_{\a}(\psi)$ and $\d_{\b}(\psi)$ are homologous via the domain of the class $\psi$ (an integral 2-chain $D(\psi)$ on $\Sigma$), so we also have $\# (\d_{\a}(\psi)\cap S_{\b})\equiv 0$. Hence
\[
n_{\ve{x}}(C)+n_{\ve{y}}(C)\equiv \#(\d_{\a}(\psi)\cap \g)\pmod 2.
\]
This implies Equation~\eqref{eq:homology-action-homotopy-hypercube}, completing the proof.
\end{proof}

We can extend the above result to the link surgery formula:

\begin{cor}
\label{cor:homology-action-equivariant} Suppose we pick a $\sigma$-basic system of Heegaard diagrams for $(L,\scA)$ with underlying Heegaard surface $\Sigma$, and $\g\subset \Sigma$ is a closed curve. Suppose that there is an integral 2-chain $C$ on $\Sigma$ such that $\g=\d C+S_{\a}+S_{\b}$, where $S_{\a}$ are closed 1-chains on $\Sigma$ which are disjoint from all alpha curves, and $S_{\b}$ are closed 1-chains disjoint from all beta curves. Suppose furthermore that $S_{\a}$ and $S_{\b}$ are disjoint from the arcs $\scA$. Then $\cA_{\g}\simeq 0$ as an endomorphism of both $\cC_{\Lambda}(L,\scA)$ and $\cX_{\Lambda}(L,\scA)^{\cL}$.
\end{cor}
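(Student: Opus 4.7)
The plan is to apply Lemma~\ref{lem:homology-action-homotopy-hypercube} to each constituent hypercube of attaching curves used in the construction of $\cC_{\Lambda}(L,\scA)$, and assemble the resulting local null-homotopies into a null-homotopy of $\cA_{\g}$ on the whole link surgery complex.

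Recall from Section~\ref{sec:basic-systems} that $\cC_{\Lambda}(L,\scA)$ is obtained by pairing two generalized hyperboxes $\cL_{\a}$ and $\cL_{\b}$ of attaching curves on $\Sigma$, compressing the resulting generalized hyperbox of chain complexes, and quotienting by the submodules $N_{\veps}$. Each constituent subcube is either a genuine paired hypercube of the form $\ve{\CF}^-(\cL_{\a}^{\mathrm{loc}},\cL_{\b}^{\mathrm{loc}})$, or is obtained from such a hypercube by extending in certain axis directions by canonical diffeomorphism maps that move basepoints along arcs in $\scA$. On each honest paired subcube, Lemma~\ref{lem:homology-action-homotopy-hypercube} supplies a length-$2$ null-homotopy $H_C$ of $\cA_{\g}$, defined by $\xs\mapsto n_{\xs}(C)\cdot \xs$ and extended $\bF[[U_1,\dots,U_\ell]]$-equivariantly.

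Next I would verify that these local null-homotopies glue across the canonical diffeomorphism directions of the generalized hyperbox. Since $\g$ is disjoint from $\scA$, a small perturbation allows us to arrange $C$, $S_{\a}$, and $S_{\b}$ also to be disjoint from a neighborhood of each arc $\lambda_i\in \scA$ while preserving the identity $\g=\d C+S_{\a}+S_{\b}$. The canonical diffeomorphism maps come from an ambient isotopy of $\Sigma$ supported near $\lambda_i$, so under the induced identification of intersection points the multiplicity $n_{\xs}(C)$ is unchanged, and $H_C$ commutes with these maps. This ensures that the homotopy relations of Lemma~\ref{lem:homology-action-homotopy-hypercube} persist in every constituent subcube of the generalized hyperbox prior to compression.

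Finally, the compression algorithm of \cite{MOIntegerSurgery} applied to the hyperbox together with the null-homotopy data $\{H_C\}$ yields a null-homotopy of the compressed $\cA_{\g}$, and this descends through the quotients by $N_{\veps}$ to an $\bF[[U_1,\dots,U_\ell]]$-equivariant null-homotopy on $\cC_{\Lambda}(L,\scA)$. The type-$D$ statement for $\cX_{\Lambda}(L,\scA)^{\cL}$ then follows from the equivalence between hypercubes (resp.\ hyperboxes) and type-$D$ modules over the cube (resp.\ box) algebra recalled earlier in the paper. The main obstacle I foresee is the bookkeeping around compression: one must verify that compressing a hyperbox together with a chain homotopy between its structure maps produces an honest chain homotopy of the compressed complex, and that this construction is compatible with the quotient by the submodules $N_{\veps}$.
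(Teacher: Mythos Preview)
Your proposal is correct and follows the same approach as the paper, which treats the corollary as immediate from Lemma~\ref{lem:homology-action-homotopy-hypercube} applied to each constituent hypercube of the hyperbox. The additional details you supply---compatibility of $H_C$ with the canonical diffeomorphism directions (using disjointness of $\g$ from $\scA$) and compatibility of the null-homotopy with compression and the quotients by $N_{\veps}$---are precisely the bookkeeping the paper leaves implicit, and your outline of them is sound.
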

\begin{proof} 
The claim that $\cA_{\g}\simeq 0$ as an endomorphism of $\cC_{\Lambda}(L)$ follows by applying Lemma~\ref{lem:homology-action-homotopy-hypercube} to each constituent hypercube of the hyperbox used in the construction of the link surgery formula.

We now consider the claim at the level of type-$D$ morphisms. In this case, the statement follows from the fact that the homotopy we constructed is suitably equivariant, as we now describe. Suppose $ \cX^{\cK}$ and $\cY^{\cK}$ are type-$D$ modules and
\[
F\colon \cX^{\cK}\hatbox {}_{\cK} \cD_0\to \cY^{\cK}\hatbox {}_{\cK} \cD_0
\]
is a linear map (not necessarily a chain map). We suppose further that we may write $F$ as in the following diagram:
\[
\begin{tikzcd}[row sep=1.3cm, labels=description]
 \cX^{\cK}\hatbox {}_{\cK} \cD_0\ar[d, "F"]
\\
\cY^{\cK}\hatbox {}_{\cK} \cD_0
\end{tikzcd}=
\quad
\begin{tikzcd}[row sep=1.5cm, column sep=1.5cm, labels=description]
X_0 \ar[r, "v+h"] \ar[d, "F_0"] \ar[dr, "F_v+F_h"] & X_1 \ar[d, "F_1"]\\
Y_0 \ar[r, "v+h"] & Y_1,
\end{tikzcd}
\]
where $v$ and $h$ are the chain maps induced by the terms of $\delta^1$ weighted by $\sigma$ and $\tau$, respectively. Finally, we observe that $F=f^1\boxtimes \bI$ for a type-$D$ module map $f^1\colon \cX^{\cK}\to \cY^{\cK}$ if
\begin{enumerate}[label= ($e$-\arabic*), ref=$e$-\arabic*]
\item \label{equivariance-1} $F_0$ is $\bF[\scU,\scV]$-equivariant;
\item\label{equivariance-2} $F_1$ is $\bF[\scU,\scV,\scV^{-1}]$-equivariant;
\item\label{equivariance-3} $F_v (a\cdot x)=\phi^\sigma(a)\cdot F_v(x)$ and $F_h(a\cdot x)=\phi^\tau(a)\cdot F_h(x)$, where $\phi^\sigma$ and $\phi^\tau$ are the maps from Section~\eqref{sec:surgery-algebra}.
\end{enumerate}
The above statement generalizes easily to type-$D$ modules over $\cL$. In the case of the chain homotopy
\[
[\d, H]=\cA_{\g},
\]
constructed in Lemma~\ref{lem:homology-action-homotopy-hypercube}, we observe that $H$ satisfies equivariance properties analogous to ~\eqref{equivariance-1}--\eqref{equivariance-3} (but for more link components) since it just multiplies an intersection point $\xs\in \bT_{\a}\cap \bT_{\b}$ by $n_C(\xs)$, and is by definition extended equivariantly over the actions of the various variables. 
\end{proof}

We now prove the main result of this section: 

\begin{proof}[Proof of Proposition~\ref{prop:homology-action-description}]
We begin with the first claim. We use a meridional $\sigma$-basic system, as described in Section~\ref{sec:basic-systems}. Let $(\Sigma,\as,\bs,\ws,\zs)$ denote the underlying Heegaard link diagram for $(S^3,L)$. Write $(\Sigma,\as_0,\bs_0)$ for the partial Heegaard diagram obtained by deleting the special alpha and beta curves (i.e. the meridians of the components of $L$). If we attach compressing disks along the $\as_0$ and $\bs_0$ curves, we obtain $S^3\setminus \nu (L)$.  We write $\mu_1,\dots, \mu_\ell\subset \Sigma$ for curves which are parallel to the special meridional alpha and beta curves. The meridians $\mu_1,\dots, \mu_\ell$  generate $H_1(S^3\setminus \nu(L))$. Hence, $\g$ is homologous in $S^3\setminus \nu(L)$ to a linear combination of these meridians, i.e. we may write
\[
\g-a_1\cdot \mu_1-\cdots -a_\ell \cdot\mu_\ell=\d C,
\]
for some 2-chain $C$ in $S^3\setminus \nu(L)$ and some integers $a_1,\dots, a_\ell$. Furthermore, we clearly have $a_i=\lk(\g, K_i)$. Such a relation induces a 2-chain $C'$ on $(\Sigma,\as_0,\bs_0)$ such that $\d C'$ is $\g-a_{1}\cdot \mu_1-\cdots -a_\ell\cdot \mu_\ell+ S_{\a}+ S_{\b}$, where $S_{\a}$ are parallel copies of curves in $\as_0$ and $\bs_0$, and $\mu_i$ are the canonical meridians. In particular, $S_{\a}$ is disjoint from all alpha curves in the $\sigma$-basic system and $S_{\b}$ is disjoint from all beta curves in the $\sigma$-basic system.  In our $\sigma$-basic system, the curves $\as_0$ and $\bs_0$ appear via small Hamiltonian translates in every collection of alpha or beta curves. Claim~\eqref{prop:homology-action-claim-1} about the map $\cA_\g$ as an endomorphism of $\cC_{\Lambda}(L,\scA)$ of the theorem statement now follows from Corollary~\ref{cor:homology-action-equivariant}.

We now prove claim~\eqref{prop:homology-action-claim-2} by constructing a homotopy $\cA_{\mu_i}\simeq F^{-K_i}$. We observe that $F^{-K_i}\simeq F^{K_i}$, which is by definition the same as $\frA_{\mu_i}$.  Write $\cL_{\a}$ and $\cL_{\b}$ for the hyperboxes of attaching curves from our $\sigma$-basic system. We suppose $K_i\subset L$ is a link component which is beta-parallel. We decompose the $K_i$-direction of $\cL_{\a}$ as
\[
\begin{tikzcd}
\cL_{\a_1}
\ar[r]& \cL_{\a_2}\ar[r] &\cdots \ar[r]& \cL_{\a_n}.
\end{tikzcd}
\]

As described in Section~\ref{sec:basic-systems}, the hypercube $\cC_{\Lambda}(L)$ is determined by the compression of the hypercube
\[
\begin{tikzcd}
\ve{\CF}^-(\cL_{\a_1},\cL_{\b})\ar[r, "F_{1,2}"]& \ve{\CF}^-(\cL_{\a_2},\cL_\b) \ar[r, "F_{2,3}"]&\cdots\ar[r, "F_{n-1,n}"]& \ve{\CF}^-(\cL_{\a_n},\cL_{\b}).
\end{tikzcd}
\]
For convenience, write $\cC_i$ for the hyperbox $\ve{\CF}^-(\cL_{\a_i},\cL_{\b})$.

Let $\mu_i'$ be a translation on the Heegaard diagram of $\mu_i$.
We observe that if $C\subset \Sigma$ is a 2-chain whose boundary is $\mu_i-\mu_i'$, then the following hyperbox compresses horizontally to the identity map on $\Cone(\cA_{\mu_i})$:
\begin{equation}
\begin{tikzcd}[labels=description, row sep=1cm, column sep=1cm]
\cC_n
	\ar[dr, dashed, "H_C"]
	\ar[r, "\id"]
	\ar[d, "\cA_{\mu_i}"]
&
\cC_n
	\ar[d, "\cA_{\mu_i'}"]
	\ar[dr,dashed, "H_C"]
	\ar[r, "\id"]
&
\cC_n
	\ar[d, "\cA_{\mu_i}"]
\\
\cC_n
	\ar[r, "\id"]
&
\cC_n
	\ar[r, "\id"]
&
\cC_n
\end{tikzcd}
\label{eq:move-move-back}
\end{equation}

Next, we observe that for any $k\in \{1,\dots, n-1\}$, the horizontal compressions of the following hyperboxes are homotopic as hyperbox morphisms from $\Cone(\cA_{\mu_i}\colon \cC_k\to \cC_k)$ and $\Cone(\cA_{\mu_i'}\colon \cC_{k+1}\to \cC_{k+1})$:
\begin{equation}
\begin{tikzcd}[labels=description,column sep=1.4cm, row sep=1.2cm]
\cC_k
	\ar[dr, dashed, "F_{k,k+1}^{\mu_i}"]
	\ar[r, "F_{k,k+1}"]
	\ar[d, "\cA_{\mu_i}"]
&
\cC_{k+1}
	\ar[d, "\cA_{\mu_i}"]
	\ar[dr,dashed, "H_C"]
	\ar[r, "\id"]
&
\cC_{k+1}
	\ar[d, "\cA_{\mu_i'}"]
\\
\cC_k
	\ar[r, "F_{k,k+1}"]
&
\cC_{k+1}
	\ar[r, "\id"]
&
\cC_{k+1}
\end{tikzcd}
\quad \text{and} \quad
\begin{tikzcd}[labels=description, column sep=1.4cm, row sep=1.2cm]
\cC_k
	\ar[dr, dashed, "H_C"]
	\ar[r, "\id"]
	\ar[d, "\cA_{\mu_i}"]
&
\cC_{k+1}
	\ar[d, "\cA_{\mu_i'}"]
	\ar[dr,dashed, "F_{k,k+1}^{\mu_i'}"]
	\ar[r, "F_{k,k+1}"]
&
\cC_{k+1}
	\ar[d, "\cA_{\mu_i'}"]
\\
\cC_k
	\ar[r, "\id"]
&
\cC_{k+1}
	\ar[r, "F_{k,k+1}"]
&
\cC_{k+1}
\end{tikzcd}
\label{eq:two-compressions}
\end{equation}
The above claim follows from Lemma~\ref{lem:homology-action-homotopy-hypercube}, which constructs a hyperbox which realizes a chain homotopy $\cA_{\mu_i}\simeq \cA_{\mu_i'}$ as endomorphisms of $\Cone(F_{j,j+1})$. This hyperbox realizes exactly the chain homotopy between the compressions described above.

In particular, from the two results related to Equations~\eqref{eq:move-move-back} and ~\eqref{eq:two-compressions}, we conclude that the hypercube map $\cA_{\mu_i}\colon \cC_{\Lambda}(L)\to \cC_{\Lambda}(L)$ can be described as the compression of the following hyperbox, for any $k$:
\[
\begin{tikzcd}[column sep=.6cm, row sep=1cm]
\cC_1
	\ar[r, "F_{1,2}"]
	\ar[dr,dashed]
	\ar[d, "\cA_{\mu_i}",labels=description]
& 
\cC_2
	\ar[r, "F_{2,3}"]
	\ar[d, "\cA_{\mu_i}",labels=description]
	\ar[dr,dashed]
&
\cdots
\ar[r]
&
\cC_k
	\ar[r, "\id"]
	\ar[d, "\cA_{\mu_i}",labels=description]
	\ar[dr,dashed, "H_C",labels=description]
& 
\cC_k
	\ar[r,"F_{k,k+1}"]
	\ar[d, "\cA_{\mu_i'}",labels=description]
	\ar[dr,dashed]
&
\cC_{k+1}
	\ar[r]
	\ar[d, "\cA_{\mu_i'}",labels=description]
&
\cdots 
\ar[r]
&
\cC_n
	\ar[r, "\id"]
	\ar[d, "\cA_{\mu_i'}",labels=description]
	\ar[dr, dashed,"H_C",labels=description]
&\cC_n
	\ar[d, "\cA_{\mu_i}",labels=description]
\\
\cC_1\ar[r, "F_{1,2}"]& \cC_2\ar[r, "F_{2,3}"]& \cdots\ar[r]& \cC_k\ar[r, "\id"]& \cC_k\ar[r,"F_{k,k+1}"]&\cC_{k+1}\ar[r]& \cdots \ar[r] &\cC_n\ar[r, "\id"]& \cC_n
\end{tikzcd}
\]
We may pick $C$, $\mu_i$, $\mu_i'$ and $k$ so that $\mu_i$ is disjoint from $\cL_{\a_1},\dots, \cL_{\a_{k}}$, and $\mu_i'$ is disjoint from $\cL_{\a_{k+1}},\dots, \cL_{\a_n}$, and $C$ covers all of the special meridional curve of $\cL_{\a_k}$ (and in particular, $n_C(p)\equiv 1$ for each $p$ in the special meridional curve of any curve collection in $\cL_{\a_k}$). We assume, additionally, that $C$ is disjoint from the curves of $\cL_{\a_n}$ (in particular, the initial special meridional curve $\a_i^s$). See Figure~\ref{cor:1}. In particular, the only diagonal map which will be non-trivial in the above compression will be the one with domain $\cC_k$. Since $n_C(\xs)\equiv 1$ for every intersection point $\xs$ in any of the Floer complexes comprising $\cC_k$, the map $H_C$ will act by the identity on $\cC_k$.  In particular, the compression coincides with the diagram
\[
\begin{tikzcd}\cC_{\Lambda}(L)\ar[d, "\cA_{\mu_i}"] \\ \cC_{\Lambda}(L)
\end{tikzcd}
\simeq
\begin{tikzcd}[labels=description, column sep=2cm]
\cC^{(*,0)}(L)\ar[d, "0"]\ar[r, "F^{-K_i}"]\ar[dr,dashed, "F^{-K_i}"]& \cC^{(*,1)}(L)\ar[d, "0"]\\
\cC^{(*,0)}(L)\ar[r, "F^{-K_i}"]& \cC^{(*,1)}(L)
\end{tikzcd}.
\]
The proof of ~\eqref{prop:homology-action-claim-2} is complete, when viewing the maps as endomorphisms of $\cC_{\Lambda}(L,\scA)$. The argument for the endomorphisms at the level of type-$D$ morphisms follows from the same reasoning as the analogous claim in ~\eqref{prop:homology-action-claim-1}, so the proof is complete.
\end{proof}

\begin{figure}[ht]
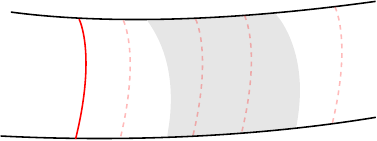
\caption{The curves $\mu_i$, $\mu_i'$ and the special meridian $\a_i^s$ of component $J\subset L$. The shaded annulus is the 2-chain $C$. The dashed curves labeled $\a'$ are the translates of $\a_i^s$ appearing in the $\sigma$-basic system.}
\label{cor:1}
\end{figure}

\begin{example} We illustrate Proposition~\ref{prop:homology-action-description} for the knot surgery formula. In this case, the result is already known (using different notation). See \cite{OSProperties}*{Theorem~9.23}. To translate into our present notation, recall that Ozsv\'{a}th and Szab\'{o}'s mapping cone formula \cite{OSIntegerSurgeries} takes the form
\[
\ve{\CF}^-(S^3_n(K))\iso \Cone(v+h_n\colon \bA(K)\to \bB(K)).
\]
 Proposition~\ref{prop:homology-action-description} translates to the claim that the diagrammatic homology action for the meridian $\mu$ coincides with the map $h_n$, viewed as an endomorphism of the mapping cone complex. (Of course, it is straightforward to see that this is null-homotopic when $n\neq 0$).  
\end{example}

\subsection{Application to the connected sum formula}
\label{sec:simplifying-connected-sum-1}

An important application of the results of the previous section is the following:

\begin{cor}
\label{cor:simplify-tensor-product}
 Suppose that $L_1$ and $L_2$ are links in $S^3$ with framings $\Lambda_1$ and $\Lambda_2$, and with systems of arcs $\scA_1$ and $\scA_2$, respectively. Let $K_1\subset L_1$ and $K_2\subset L_2$ be distinguished components, whose arcs are alpha-parallel and beta-parallel, respectively. Let $\Lambda_1'$ denote the restriction of the framing $\Lambda_1$ to $L_1\setminus K_1$. Suppose that $K_1\subset S^3_{\Lambda_1'}(L_1\setminus K_1)$ is rationally null-homologous. 
Write $\cX_{\Lambda_i}(L_i; K_i)^{\cK}$ for the type-$D$ modules obtained from $\cX_{\Lambda_i}(L_i,\scA_i)^{\cL_{\ell_i}}$ by boxing ${}_{\cK} \cD_0$ into each algebra component except for the one corresponding to $K_i$.
Then
\[
\begin{split}
&\left(\cX_{\Lambda_1}(L_1; K_1)^{\cK}, \cX_{\Lambda_2}(L_2;K_2)^{\cK}\right)\hatbox {}_{\cK| \cK} W_l^{\cK}
\\
\simeq  &\left(\cX_{\Lambda_1}(L_1;K_1)^{\cK}, \cX_{\Lambda_2}(L_2;K_2)^{\cK}\right)\hatbox {}_{\cK|\cK} M^{\cK}.
\end{split}
\]
\end{cor}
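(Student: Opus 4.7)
The plan is to translate the tensor product with $W_l$ versus $M_2$ into the alternate mapping cone description of the pair-of-pants given in Equation~\eqref{eq:alternate-pairing-theorem}, and to show that the discrepancy between the two pairings is controlled by the endomorphism $\cA_{[K_1]}\otimes \id$ on the $K_1$-coordinate $=1$ subcube, which we will prove is null-homotopic using the rational null-homology hypothesis.

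First, I would unpack what tensoring with $W_l$ versus $M_2$ does at the level of link surgery complexes. The modules ${}_{\cK\otimes \cK}M_2^{\cK}$ and ${}_{\cK\otimes \cK}W_l^{\cK}$ agree in their $\delta_2^1$ and $\delta_3^1$ terms; the only discrepancy is the extra $\delta_5^1$ in $W_l$ encoded by the analog of Equation~\eqref{eq:delta-5-1-merge}. By the alternate description of the pair-of-pants pairing in Equation~\eqref{eq:alternate-pairing-theorem}, this extra term manifests in the differential of $\cC_{\Lambda_1+\Lambda_2}(L_1\# L_2,\scA_l)$ as the single summand
\[
\scV^{-1}(\Phi_{w_1}+\Phi_{w_2})\circ (\cA_{[K_1]}\otimes \id)\circ (F^{-K_1}\otimes F^{-K_2}),
\]
acting on $\cC^0(L_1)\otimes \cC^0(L_2)$. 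Hence it suffices to construct a chain homotopy $\cA_{[K_1]}\simeq 0$ as an endomorphism of $\cC^1(L_1)$, equivariant with respect to all auxiliary variables, and then promote this to a homotopy of type-$D$ bimodules.

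Next, I would identify the endomorphism $\cA_{[K_1]}$ of $\cC^1(L_1)$ with an algebraic action. By Proposition~\ref{prop:homology-action-description}(ii), as an endomorphism of $\cC^1(L_1)$ we have
\[
\cA_{[K_1]}\simeq \sum_{j\neq 1}\lk(K_1,K_j)\cA_{[\mu_j]},
\]
and Proposition~\ref{prop:homology-action-description}(iii) further identifies each $\cA_{[\mu_j]}$ with the algebraic hypercube action $\frA_{[\mu_j]}$ of Equation~\eqref{eq:hypercube-homology-action}. Thus, on $\cC^1(L_1)$, $\cA_{[K_1]}$ is chain homotopic to $\frA_{\g}$ where $\g = \sum_{j\neq 1}\lk(K_1,K_j)[\mu_j]\in \Z^{\ell_1-1}$ corresponds to the image of $[K_1]$ under the identification $H_1(S^3\setminus \nu(L_1-K_1))\iso \Z^{\ell_1-1}$.

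Now I invoke the hypothesis. The class $\g$ is precisely the image of $[K_1]$ in $H_1(S^3_{\Lambda_1'}(L_1-K_1))/\Tors$, which vanishes by the rational null-homology assumption. Applying the subcube version of Lemma~\ref{lem:algebraic-homology-action-link-surgery} described in Remark~\ref{rem:subcube-refinement} (taking $L_0=L_1-K_1$ and the remaining components in $L_1\cup L_2\setminus\{K_1\}$ playing the role of $L_1$ in the remark), we obtain a null-homotopy $H_\g$ of $\frA_\g$ on $\cC^1(L_1)$ which is equivariant over $\bF[[\scU_2,\scV_2,\dots]]$ (the variables for all components other than $K_1$). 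Composing with the homotopies from Proposition~\ref{prop:homology-action-description}(ii),(iii) yields an equivariant null-homotopy $\cH\colon \cC^1(L_1)\to \cC^1(L_1)$ of $\cA_{[K_1]}$.

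Finally, I would use $\cH$ to build the desired bimodule homotopy. At the level of link surgery complexes, the homotopy $\scV^{-1}(\Phi_{w_1}+\Phi_{w_2})\circ(\cH\otimes \id)\circ (F^{-K_1}\otimes F^{-K_2})$ cancels the extra term distinguishing $W_l$ from $M_2$; this supplies a chain homotopy equivalence between the two pairings. The main obstacle is verifying that this homotopy is compatible with the $\cK_1\otimes \cK_2$-bimodule structure, i.e.\ that it lifts to a homotopy of type-$D$ modules (not merely of the underlying surgery complexes once one has boxed in ${}_{\cK}\cD_0$ everywhere except at $K_1$ and $K_2$). This follows because $\cH$ is constructed as multiplication by a grading-valued scalar function $\omega_\g$ (as in the proof of Lemma~\ref{lem:algebraic-homology-action-link-surgery}), so it automatically commutes with the remaining $\cK$-structure maps in the factors other than $K_1$ and $K_2$; the algebra elements $\sigma,\tau$ on the $K_1$-side enter only through $F^{\pm K_1}$, whose interaction with the Alexander grading is recorded by Equation~\eqref{eq:grading-link-surgery} and has already been absorbed into the defining property~\eqref{eq:requirements-omega} of $\omega_\g$. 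Hence $\cH$ promotes to the required bimodule null-homotopy, completing the identification.
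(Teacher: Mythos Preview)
Your proposal is correct and follows essentially the same route as the paper: reduce to the alternate mapping cone description of Equation~\eqref{eq:alternate-pairing-theorem}, observe the discrepancy between $W_l$ and $M_2$ factors through $\cA_{[K_1]}\otimes\id$, convert this to the algebraic action via Proposition~\ref{prop:homology-action-description}, and kill it with Lemma~\ref{lem:algebraic-homology-action-link-surgery}/Remark~\ref{rem:subcube-refinement} using the rational null-homology hypothesis. The paper's own argument is the three-sentence paragraph immediately following the corollary and says exactly this, just more tersely.

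One small correction: when you invoke Remark~\ref{rem:subcube-refinement}, the partition should be taken inside the link $L_1$ alone, with the remark's ``$L_0$'' equal to $L_1-K_1$ and the remark's ``$L_1$'' equal to $\{K_1\}$; the link $L_2$ plays no role at this step since $\cA_{[K_1]}$ acts only on the $\cC^1(L_1)$ tensor factor. Consequently the equivariance the remark produces is over the $K_1$-variables $\bF[[\scU_1,\scV_1]]$, not over the variables for the other components as you wrote. This is precisely the equivariance you need, since $\cC^1(L_1)\otimes\cC^1(L_2)$ is tensored over $\bF[\scU,\scV,\scV^{-1}]$ acting through the $K_1=K_2$ variables, so $\cH\otimes\id$ is then a well-defined null-homotopy on the tensor product. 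With that fixed, your argument goes through.
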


\begin{proof} We use the description of the surgery complex from Theorem~\ref{thm:pair-of-pants-on-hypercube}.  In the model constructed using ${}_{\cK|\cK}W_l^{\cK}$, we have the term
\[
\Tw_{K_1}\circ (F^{-K_1}|F^{-K_2})
\]
where
\[
\Tw_{K_1}=\id+  \scV^{-1}(\Phi_{w_1}|1+1|\Phi_{w_2})\circ (\cA_{K_1} | 1).
\]
In the model constructed using ${}_{\cK_1|\cK_2} M^{\cK}$, we instead have the term
\[
F^{-K_1}|F^{-K_2}.
\]
The map $\cA_{K_1}$ appears as an endomorphism of $\cC_{\Lambda_1}^{(*,1)}(L_1)$. We claim that $\cA_{K_1}$ is null-homotopic (and furthermore, that the corresponding type-$D$ morphism is null-homotopic on idempotent 1 of the type-$D$ module $\cX_{\Lambda_1}(L_1,\scA_1)^{\cK}$). We now use Proposition~\ref{prop:homology-action-description} to construct a chain homotopy
\[
\cA_{K_1}\simeq \sum_{i=1}^n \lk(K_1,K_i)\cdot  \frA_{\mu_i}.
\]
Note that the term $\lk(K_1,K_1)$ will depend on how we embed a trace of the knot $K_1$ on the Heegaard diagram. It is the linking number of a trace of $K_1$ on $\Sigma$ with a copy of $K_1$, pushed to be transverse to $\Sigma$, as in the statement of Proposition~\ref{prop:homology-action-description}. Note that on $\cC^{(*,1)}(L_1)$, the term $\frA_{\mu_1}$ vanishes identically, so we obtain that $\cA_{K_1}$ is homotopic on $\cC^{(*,1)}(L_1)$ to
\[
\sum_{i=2}^n \lk(K_1,K_i)\cdot  \frA_{\mu_i}.
\]
We now observe that Lemma~\ref{lem:algebraic-homology-action-link-surgery} implies that the above is homotopic to $\frA_{K_1}$, where we now view $K_1$ as an element of $H_1(S^3_{\Lambda'_1}(L_1\setminus K_1))/\Tors$ (cf. Remark~\ref{rem:subcube-refinement}). Hence $\cA_{K_1}\simeq 0$ on $\cC^{(*,1)}(L_1)$. It follows that $\Tw_{K_1}\simeq \id$, so the claim follows on the level of surgery complexes. To see that the claim also holds on the level of type-$D$ modules, it suffices to show that the homotopy $\Tw_{K_1}\simeq \id$ is $\bF[\scU,\scV,\scV^{-1}]$-equivariant. However this follows from Remark~\ref{rem:subcube-refinement}. 
\end{proof}

\section{Simplifying the pairing theorem}

In this section, we recall the category of \emph{Alexander modules} from \cite{ZemBordered}*{Section~6}. After recalling precise definitions, we will prove the following result, which is essential to our proof of Theorem~\ref{thm:pairing-main} when $b_1(Y(G))>0$. The results of this section are not essential when $b_1(Y(G))=0$.

\begin{prop}\label{prop:merge=pair-of-pants}
 Suppose that $\cX_1^{\cK}$ and $\cX_2^{\cK}$ are type-$D$ Alexander modules which are homotopy equivalent to finitely generated type-$D$ modules (i.e. have homotopy equivalent models where $\cX_i$ are finite dimensional $\bF$-vector spaces). Then
 \[
\left( \cX_1^{\cK},\cX_2^{\cK}\right)\hatbox {}_{\cK| \cK} W_l^{\cK}\simeq \left( \cX_1^{\cK}, \cX_2^{\cK}\right)\hatbox {}_{\cK| \cK} M^{\cK}.
 \]
 The same holds for $W_r$ in place of $W_l$.
\end{prop}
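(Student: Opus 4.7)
The plan is to show, in abstract terms, the analog of Corollary~\ref{cor:simplify-tensor-product}: the extra higher $A_\infty$ operation in $W_l$ beyond what appears in $M_2$ (namely the $\delta_5^1$ term, which for $W_l$ is the analog of Equation~\eqref{eq:delta-5-1-merge} with tensor factors switched) becomes null-homotopic when boxed with finitely generated Alexander cobounded type-$D$ modules. The null-homotopy will be constructed explicitly from the Alexander grading, paralleling the role of the function $\omega_\g$ in Lemma~\ref{lem:algebraic-homology-action-link-surgery}.

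First, I would replace $\cX_1$ and $\cX_2$ by their finitely generated models. The bimodules $W_l$ and $M_2$ agree on $\delta_2^1$ and $\delta_3^1$, so the identity on underlying groups is a candidate bimodule morphism whose only obstruction to being a strict equivalence is the $\delta_5^1$ operation of $W_l$. After boxing with $\cX_1 \otimes_{\bF} \cX_2$, this obstruction produces an extra term in the differential; unpacking the box product shows this term factors through the endomorphism $\scU \Phi_w + \scV \Psi_z$ acting on (say) the $\cX_1$ factor, post-composed with a basepoint action $\Phi_w$ on the $\cX_2$ factor. By Equation~\eqref{eq:homology-action-basepoint}, this is exactly the type-$D$ analog of the diagrammatic homology action $\cA_{[K_1]}$ appearing in the proof of Corollary~\ref{cor:simplify-tensor-product}.

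The heart of the proof is the claim that $\scU \Phi_w + \scV \Psi_z$ is null-homotopic on any finitely generated Alexander cobounded type-$D$ module $\cX^{\cK}$. The explicit null-homotopy is the diagonal operator $H(\xs) = A(\xs) \cdot \xs$, where $A(\xs) \in \bF$ is the Alexander grading reduced modulo $2$. Writing $\delta^1(\xs) = \sum \ys_i \otimes \alpha_i$, grading compatibility forces $A(\xs) - A(\ys_i) = A(\alpha_i)$; for $\alpha_i = \scU^p \scV^q$ one has $A(\alpha_i) = q - p \equiv p + q \pmod 2$, so
\[
[\delta^1, H](\xs) = \sum (A(\xs) - A(\ys_i)) \, \ys_i \otimes \alpha_i = \sum (p+q) \, \ys_i \otimes \scU^p \scV^q = (\scU \Phi_w + \scV \Psi_z)(\xs)
\]
on the $\ve{I}_0$-component. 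A parallel computation handles the $\ve{I}_1$-component, as well as the $\sigma$- and $\tau$-summands of $\delta^1$, where the corresponding Alexander-grading shifts are built into the grading conventions on these special algebra elements so that the $H$-commutator still picks out the correct operator. Finite generation ensures $A$ takes only finitely many values, so $H$ is well-defined; Alexander coboundedness ensures $H$ extends consistently through the completed algebra structure, which is needed because the $\scV^{-1}$ factor in the $\delta_5^1$ of $W_l$ lives in the localized component $\ve{I}_1 \cdot \cK \cdot \ve{I}_1$.

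Finally, assembling these pieces, the null-homotopy $H$ on $\cX_1$ promotes to a null-homotopy of the obstruction term identified in the second paragraph, and hence exhibits the required homotopy equivalence of the two boxed products. The case of $W_r$ is identical after swapping the roles of $\cX_1$ and $\cX_2$. The main obstacle will be the third step: verifying that $H$ genuinely extends to a morphism of type-$D$ modules over the completed algebra $\cK$, interacts correctly with the $\sigma$- and $\tau$-summands of $\delta^1$ (which require careful grading conventions), and that the resulting null-homotopy of the boxed $\delta_5^1$ respects the higher $A_\infty$ relations of $W_l$. The Alexander coboundedness hypothesis plays its essential role here, guaranteeing convergence in the completions where these manipulations take place.
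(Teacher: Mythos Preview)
Your overall strategy---reduce to showing the two $h$-maps differ by something factoring through $\scU\Phi+\scV\Psi$, then null-homotope that endomorphism---matches the paper's. The gap is in your construction of the null-homotopy. You set $H(\xs)=A(\xs)\cdot\xs$ where $A$ is ``the Alexander grading reduced modulo $2$,'' but an abstract finitely generated type-$D$ module over $\cK$ carries no Alexander grading, and one need not exist. For instance, take $\cX_0=0$, $\cX_1=\langle\xs,\ys\rangle$ with $\delta^1(\xs)=\ys\otimes(\scU+\scV^2)$ and $\delta^1(\ys)=0$. Any $\Z$-valued (or even $\Z/2$-valued) Alexander grading compatible with $\delta^1$ would force $A(\xs)-A(\ys)$ to equal both $-1$ (from $\scU$) and $2$ (from $\scV^2$), which is impossible even mod $2$. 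Your appeal to ``finite generation ensures $A$ takes only finitely many values'' presupposes that $A$ exists; Alexander coboundedness is a topological/continuity condition on $\delta^1$ and does not supply a grading either. In effect you are reproducing the mechanism behind Corollary~\ref{cor:simplify-tensor-product} (where the geometric link surgery complex \emph{does} carry an Alexander grading and the argument goes through when the relevant class is torsion), but Proposition~\ref{prop:merge=pair-of-pants} is needed precisely in the situations where that grading-based homotopy is unavailable.

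The paper's argument replaces your grading trick with a structure theorem. It first uses Lemma~\ref{lem:restate-type-D} to reduce to comparing the $h$-maps, so that one only needs the null-homotopy on the idempotent-$1$ piece $\cX_1\otimes\bF[\scV,\scV^{-1}]][[\scU]]$ rather than on the full type-$D$ module. The key observation is that $\bF[\scV,\scV^{-1}]]$ is a field, so this coefficient ring is a PID whose ideals are exactly $(U^i)$ with $U=\scU\scV$. The classification of finitely generated free chain complexes over a PID (Lemma~\ref{lem:classification-PID}) then puts the complex into a direct sum of $1$-step complexes and $2$-step complexes $\d(\xs)=\ys\otimes U^i$, on which $\scU\Phi+\scV\Psi$ vanishes identically because $(\scU\d_{\scU}+\scV\d_{\scV})(U^i)=2iU^i=0$ in characteristic $2$ (Corollary~\ref{cor:eliminate-homology-action}). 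In the example above, $\scU+\scV^2$ is a unit in $\bF[\scV,\scV^{-1}]][[\scU]]$, so after the PID change of basis the complex becomes $\d(\xs')=\ys\otimes 1$ and the map is zero---but the resulting null-homotopy, transported back, is not diagonal and cannot be written as multiplication by any scalar function of the generators.
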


Our proof is inspired by work of the author with Hendricks and Manolescu \cite{HMZConnectedSum}*{Section~6}, where an extra term in an involutive connected sum formula is shown to be null-homotopic for purely algebraic reasons. The proof occupies the next several subsections.

\subsection{Finitely generated $\cK$-modules}

 We say that $\cX^{\cK}$ is \emph{finitely generated} if $\cX$ is finitely generated as an $\bF$ vector space. Type-$D$ Alexander modules which are finitely generated form a subcategory which admits a somewhat simpler description not involving linear topological spaces.

We denote by $\ve{\cK}$ the completion of the algebra $\cK$ with respect to the topology described in Section~\ref{def:knot-topology}. As a vector space, we have the following isomorphisms:
\[
\begin{split}
\ve{I}_0\cdot \ve{\cK} \cdot \ve{I}_0&\iso \bF\llsquare \scU,\scV\rrsquare \\
 \ve{I}_1\cdot \ve{\cK}\cdot \ve{I}_0&\iso \bF[\scV,\scV^{-1}\rrsquare  \llsquare \scU\rrsquare  \langle \tau\rangle \oplus \bF\llsquare \scV,\scV^{-1}] \llsquare \scU\rrsquare  \langle \sigma \rangle\\
\ve{I}_1\cdot \ve{\cK} \cdot \ve{I}_1&\iso \bF[\scV,\scV^{-1}] \llsquare \scU\rrsquare .
\end{split}
\]

In the above, $\bF[\scV,\scV^{-1}\rrsquare$ denotes the set of half infinite Laurent series in $\scV$, completed with respect to negative powers. More explicitly, these are infinite sums $\sum_{i\in \Z} a_i \scV^{i}$ where $a_i\in \bF$ and there is an $N\in \Z$ so that $a_i=0$ if $i>N$. Also, $\bF[\scV,\scV^{-1}\rrsquare  \llsquare \scU\rrsquare$ consists of infinite sums
\[
\sum_{i\ge 0} f_i \scU^i
\]
where $f_i\in \bF[\scV,\scV^{-1}\rrsquare$. (Note that $\bF[\scV,\scV^{-1}\rrsquare  \llsquare \scU\rrsquare$ and $\bF\llsquare \scU\rrsquare [\scV, \scV^{-1}\rrsquare$ are not the same).

The completion $\ve{\cK}$ is an algebra, and $\mu_2$ is continuous when viewed as a map from $\ve{\cK} \vecotimes_{\mathrm{I}} \ve{\cK}$ to $\ve{\cK}$. 

Since $\ve{\cK}$ is an algebra, we may consider the category of finitely generated type-$D$ modules over $\ve{\cK}$. We write $\MOD^{\ve{\cK}}_{\fg}$ for this category. These are ordinary type-$D$ modules over $\ve{\cK}$ (i.e. not equipped with a topology), which have underlying vector spaces which are finitely generated over $\bF$. Morphisms in this category are ordinary type-$D$ morphisms, i.e. maps $f^1\colon \cX\to \cY\otimes \ve{\cK}$. 

 There is a related category, $\MOD^{\cK}_{\fg,\fra}$, which is the set of type-$D$ Alexander modules over $\cK$, which are finitely generated over $\bF$.

\begin{lem}[\cite{ZemBordered}*{Section~6.7}] The categories $\MOD^{\cK}_{\fg,\fra}$ and $\MOD^{\ve{\cK}}_{\fg}$ are equivalent.
\end{lem}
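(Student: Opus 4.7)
The plan is to construct mutually quasi-inverse functors $F\colon \MOD^{\cK}_{\fg,\fra} \to \MOD^{\ve{\cK}}_{\fg}$ and $G$ in the opposite direction, where $F$ sends a Cauchy sequence of structure maps to its limit in the completion and $G$ sends a structure map over $\ve{\cK}$ to a Cauchy sequence of truncations over $\cK$. The enabling observation is that when $\cX$ is finitely generated over $\bF$, the completion of $\cX \otimes_{\mathrm{I}} \cK$ with respect to the tensor product filtration from Equation~\eqref{eq:standard-tensor} is canonically identified with $\cX \otimes_{\mathrm{I}} \ve{\cK}$, so limits of Cauchy sequences of maps land in the latter.

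First I would set up $F$. Given an object $(\cX, \{(\delta^1)_n\}) \in \MOD^{\cK}_{\fg,\fra}$, the sequence $(\delta^1)_n$ is Cauchy by assumption, hence has a well-defined limit $\tilde\delta^1 \colon \cX \to \cX \otimes_{\mathrm{I}} \ve{\cK}$. I would then check that this is an ordinary type-$D$ structure over $\ve{\cK}$: the curvature $(\id \otimes \mu_2) \circ (\tilde\delta^1 \otimes \id) \circ \tilde\delta^1$ equals the limit of the curvatures of $(\delta^1)_n$, which vanishes because the original sequence of curvatures is Cauchy equivalent to zero and $\mu_2$ descends to a well-defined map $\ve{\cK} \otimes_{\mathrm{I}} \ve{\cK} \to \ve{\cK}$ as noted in the excerpt. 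Morphisms are sent to their limits in the same way. The functor $G$ is defined by approximation: given $(\cX, \delta^1)$ over $\ve{\cK}$, pick an $\bF$-basis $x_1, \ldots, x_m$ of $\cX$ and write each coefficient of $\delta^1(x_i)$ in $\ve{\cK}$ as a limit of elements of $\cK$ modulo the ideals $J_n$ of Definition~\ref{def:knot-topology}. Choosing such representatives in $\cK$ defines a Cauchy sequence $(\delta^1)_n$ whose curvature is Cauchy equivalent to zero, because its limit has vanishing curvature.

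The natural isomorphisms $F \circ G \simeq \id$ and $G \circ F \simeq \id$ follow essentially by construction, since two Cauchy sequences with the same limit are Cauchy equivalent. The step I expect to require the most care is the comparison on morphism sets: morphisms in $\MOD^{\cK}_{\fg,\fra}$ are themselves Cauchy equivalence classes of Cauchy sequences of maps $\cX \to \cY \otimes_{\mathrm{I}} \cK$, and one must verify that $F$ and $G$ induce bijections between these equivalence classes and ordinary type-$D$ morphisms $\cX \to \cY \otimes_{\mathrm{I}} \ve{\cK}$. This will reduce to the same limit/truncation arguments applied to morphism data, together with a careful check that the homotopy relations defining equivalence of morphisms are preserved; here one needs that passing to the limit converts Cauchy equivalence into honest equality and that truncating an honest type-$D$ morphism yields a Cauchy sequence whose associated curvature-type relations hold up to Cauchy equivalence.
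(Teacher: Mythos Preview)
The paper does not supply its own proof of this lemma; it is stated with a citation to \cite{ZemBordered}*{Proposition~6.46} and used as a black box. Your sketch is the natural argument and almost certainly matches the cited proof: the key point is exactly the one you isolate, that for $\cX$ finite-dimensional the completion of $\cX\otimes_{\mathrm{I}}\cK$ coincides with $\cX\otimes_{\mathrm{I}}\ve{\cK}$, so Cauchy sequences of structure maps have honest limits and conversely any structure map over $\ve{\cK}$ can be truncated modulo the $J_n$ to produce a Cauchy sequence.

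One small point worth tightening: when you verify that the limit $\tilde\delta^1$ has vanishing curvature, you need that taking limits commutes with the composition $(\id\otimes\mu_2)\circ(\delta^1\otimes\id)\circ\delta^1$. This is where finite generation is essential---the intermediate tensor products stay uncompleted in the $\cK$-factors, so the fact that $\mu_2$ fails to extend to $\ve{\cK}\hatotimes_{\mathrm{I}}\ve{\cK}$ does not cause trouble. You implicitly use this but it would be worth making explicit. Otherwise the outline is sound.
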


The category of finitely generated type-$D$ modules is important for the bordered theory for the following reason.

\begin{prop}[\cite{ZemBordered}*{Proposition~18.11}]
\label{prop:finite-generation}
Suppose $L=K_1\cup \cdots \cup K_n$ is a link in $S^3$ with framing $\Lambda$. Write $\cX_{\Lambda}(L; K_n)^{\cK}$ for the type-$D$ module obtained by tensoring $(n-1)$-copies of the type-$A$ module for a solid torus, ${}_{\cK} \cD_0$,  with $\cX_{\Lambda}(L)^{\cL_n}$ along the algebra components for $K_1,\dots, K_{n-1}$. Then $\cX_{\Lambda}(L; K_n)^{\cK}$ is homotopy equivalent to a finitely generated type-$D$ module.
\end{prop}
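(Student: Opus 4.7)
The plan is to proceed by induction on the number of copies of ${}_{\cK} \cD_0$ that have been boxed in. The base case is $\cX_{\Lambda}(L)^{\cL_n}$ itself, with no tensoring performed. This is manifestly finitely generated as a type-$D$ module: analogously to the knot case sketched in Section~\ref{sec:surgery-algebra}, the underlying $\bF$-vector space has dimension $2^n \cdot \rank \cCFL(L)$, where $\cCFL(L)$ is a finitely generated free module over $\bF[\scU_1,\scV_1,\dots,\scU_n,\scV_n]$ (a Heegaard link diagram for $L \subset S^3$ has only finitely many intersection points).

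The inductive step amounts to the following key claim: if $\cY^{\cK \otimes \cB}$ is homotopy equivalent to a finitely generated type-$D$ module over $\cK \otimes \cB$ (for $\cB = \cK^{\otimes k}$ some tensor power), then $\cY^{\cK \otimes \cB} \hatbox {}_{\cK} \cD_0$ is homotopy equivalent to a finitely generated type-$D$ module over $\cB$. Applying the claim $n-1$ times, once for each of $K_1, \dots, K_{n-1}$, would give the proposition.

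To establish the key claim, I would unpack the box tensor product explicitly and run a Gaussian elimination argument. The underlying vector space of the tensor product is $\cY \otimes_{\mathrm{I}} \cD_0$, which is \emph{a priori} infinite-dimensional over $\bF$ because $(\cD_0)_0 \iso \bF[\scU,\scV]$ and $(\cD_0)_1 \iso \bF[\scU,\scV,\scV^{-1}]$. However, passing to the type-$D$ version $\cD_0^{\cK}$ with structure map $\delta^1(i_0) = i_1 \otimes (\sigma + \tau)$, and combining it with the $\sigma$- and $\tau$-type summands of the structure map of $\cY^{\cK}$ (coming from the $F^{K_i}$ and $F^{-K_i}$ maps in the link surgery hypercube), produces arrows between the two idempotent-strata of the tensor product that cancel generators in pairs. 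Since $\tau$ acts on $\bF[\scU,\scV]$ via the algebra map $T$ with $T(\scU) = \scV^{-1}$ and $T(\scV) = \scU\scV^2$, which shifts Alexander gradings, and since $\sigma$ acts by inclusion, the surviving generators after cancellation form a bounded window in Alexander grading. The Alexander cobounded framework from \cite{ZemBordered}*{Section~6} guarantees the cancellation is compatible with the cofinite subspace topology and produces a genuinely finite model.

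The main obstacle is making the Gaussian elimination rigorous in the completed setting, because the tensor product naturally involves infinite sums. The equivalence of categories $\MOD^{\cK}_{\fg,\fra} \simeq \MOD^{\ve{\cK}}_{\fg}$ recalled just before the proposition is the correct bridge: it suffices to produce a finite-dimensional type-$D$ module over the completed algebra $\ve{\cK}$ that is homotopy equivalent (in the completed sense) to the tensor product. Verifying that the cancellations converge in the cofinite subspace topology, and organizing them so as to respect the Alexander cobounded structure at every stage, is where the technical heart of the argument lies.
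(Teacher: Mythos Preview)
The paper does not prove this proposition; it is quoted verbatim from \cite{ZemBordered}*{Proposition~18.11} and used as a black box. So there is no in-paper argument to compare your proposal against.

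On the substance of your sketch: the outline (tensor in one $\cD_0$ at a time, then Gaussian-eliminate to a finite model) is plausible, but the step you flag as ``the technical heart'' is in fact a genuine gap, not just a matter of bookkeeping. Your assertion that ``the surviving generators after cancellation form a bounded window in Alexander grading'' is exactly the content of the proposition, and it does not follow formally from Alexander coboundedness or from the inductive hypothesis ``homotopy equivalent to finitely generated.'' What is needed is the large-surgeries phenomenon: for Alexander gradings $s$ with $|s|\gg 0$, one of the inclusion maps (the $v$-type map for $s\gg 0$, the $h$-type map for $s\ll 0$) restricts to an isomorphism between the relevant subcomplexes, so those generators cancel in pairs. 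This is a geometric input about link Floer homology (finiteness of the Alexander-grading support of $\cCFL$, together with the specific form of $v$ and $h$), not a consequence of the algebraic categories alone. An arbitrary finitely generated Alexander cobounded type-$D$ module over $\cK$ need not have this property.

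Relatedly, your inductive hypothesis is too weak to close the loop. After boxing in a single ${}_{\cK}\cD_0$, the intermediate object is homotopy equivalent to a finitely generated type-$D$ module over $\cL_{n-1}$ only because it is (up to homotopy) the link surgery module for the remaining link in the partially surgered manifold---which is again the statement you are trying to prove. To make the induction go through you would need to carry along enough structure (e.g.\ that the module arises from an actual link, or an explicit bound on Alexander support) to re-run the large-surgeries truncation at each stage.
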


Note that an alternate proof may be found in \cite{ZemGenSurgery}*{Theorem~9.1}, which proves that $\cX_{\Lambda}(L;K_n)^{\cK}$ is homotopy equivalent to $\cX_{\lambda}(S_{\Lambda'}(L'), K_n)^{\cK}$, where $L'=L\setminus K$, and $\Lambda'$ and $\lambda$ are the restricted framing. This module is finitely generated since it is constructed using generators on a finite set of Heegaard diagrams.

For the sake of exposition, we now explain the definition of a finitely generated type-$D$ module over $\ve{\cK}$ in more detail:

\begin{rem}
\label{rem:restate-type-D}A finitely generated type-$D$ Alexander module over $\cK$ is equivalent to the following data:
\begin{enumerate}
\item A pair of finite dimensional $\bF$-vector spaces $\cX_0$ and $\cX_1$, equipped with internal differentials
\[
\d_0\colon \cX_0\to \cX_0\otimes \bF\llsquare \scU,\scV\rrsquare \quad \text{and} \quad \d_1\colon \cX_1\to \cX_1\otimes \bF[\scV,\scV^{-1}]\llsquare \scU\rrsquare .
\]
\item Maps
\[
\begin{split}
v&\colon \cX_0\to  \cX_1\otimes \bF\llsquare \scV,\scV^{-1}]\llsquare \scU\rrsquare \quad \text{and}
\\ h&\colon \cX_0\to \cX_1\otimes \bF[\scV,\scV^{-1}\rrsquare \llsquare \scU\rrsquare .
\end{split}
\]
 Furthermore, if we extend $v$ $\phi^\sigma$-equivariantly to a map $V$ with domain $\cX_0\otimes \bF\llsquare \scU,\scV\rrsquare $, then $V$ is a chain map. If we extend $h$ $\phi^\tau$-equivariantly to a map $H$ with domain $\cX_0\otimes \bF\llsquare \scU,\scV\rrsquare $, then $H$ is a chain map. (Recall that $\phi^\sigma$ and $\phi^\tau$ are the algebra morphisms in the definition of $\cK$; see Section~\ref{sec:surgery-algebra}).
\end{enumerate}
Morphisms between finitely generated type-$D$ modules are similar. As a particularly important special case, suppose $h$ and $h'$ are two maps from $\cX_0$ to $\cX_1\otimes \bF[\scV,\scV^{-1}\rrsquare \llsquare \scU\rrsquare $, whose $\phi^\tau$-equivariant extensions are chain maps, as above. Suppose there is a third map $j\colon \cX_0\to \cX_1\otimes \bF[\scV,\scV^{-1}\rrsquare \llsquare \scU\rrsquare $, and let $J$ be its $\phi^\tau$-equivariant extension to $\cX_0\otimes \bF\llsquare \scU,\scV\rrsquare $. If $\d_1\circ J+J\circ \d_0=H+H'$, then the type-$D$ module obtained by replacing $h$ with $h'$ is homotopy equivalent to $\cX^{\cK}$.
\end{rem}

We now prove a key lemma:

\begin{lem}\label{lem:classification-PID} Suppose that $\cX^{\cK}$ is a finitely generated type-$D$ module such that $\cX_1\otimes \bF[\scV,\scV^{-1}\rrsquare \llsquare \scU\rrsquare $ admits a $\Z_2$-valued grading (with $\scU$ and $\scV$ having even grading). Then the chain complex  $\cX_1\otimes \bF[\scV,\scV^{-1}\rrsquare \llsquare \scU\rrsquare $ is  chain isomorphic to a direct sum of 1-step complexes (i.e. complexes with a single generator and vanishing differential) and 2-step complexes (i.e. complexes with two generators $\xs,\ys$ and $\d(\xs)=\ys\otimes \a_{\ys,\xs}$ ). Furthermore, in the two step complexes, we may assume each $\a_{\ys,\xs}$ is of the form  $U^i$ for some $i\in \N$, where $U=\scU\scV$.
\end{lem}
\begin{proof} The key observation is that $\bF[\scV,\scV^{-1}\rrsquare $ is a field, so $\bF[\scV,\scV^{-1}\rrsquare \llsquare \scU\rrsquare $ is a PID. In fact, the ideals of $\bF[\scV,\scV^{-1}\rrsquare \llsquare \scU\rrsquare $ are all of the form $(U^i)$ for $i\ge 0$. Hence, the proof follows immediately from the classification theorem for finitely generated free chain complexes over a PID. We sketch the argument very briefly in our present setting. One first considers the arrows of the differential on $\cX_0\otimes \bF[\scV,\scV^{-1}\rrsquare \llsquare \scU\rrsquare $ which are weighted by units in $\bF[\scV,\scV^{-1}\rrsquare \llsquare \scU\rrsquare$. If any such arrow exists, we pick one arbitrarily. Suppose this arrow goes from $\xs$ to $\ys\otimes \a$. Since the ideals of $\bF[\scV,\scV^{-1}\rrsquare \llsquare \scU\rrsquare $ are all of the form $(U^i)$, one may perform a change of basis so that there are no other arrows from $\xs$ or to $\ys$. After performing this change of basis, there are also no arrows to $\xs$ or from $\ys$, since $\delta^1$ squares to 0. Performing a further change of basis yields the summand consisting of the subcomplex $\delta^1(\xs)= \ys\otimes 1$. We repeat this until all arrows (except on these two step complexes) have weight in the ideal $(U^1)$. We repeat the above procedure to isolate arrows with weight in $(U^1)\setminus (U^2)$ until, outside of the isolated 2-step complexes, all arrows have weight in the ideal $(U^2)$. We repeat this procedure until we are left with only 1-step and 2-step complexes. After a chain isomorphism, the weights on the algebra elements in the 2-step complexes may be taken to be powers of $U$.
\end{proof}

\begin{cor} \label{cor:eliminate-homology-action}
Let $\cX^{\cK}$ be a finitely generated type-$D$ module. Consider the endomorphism $\cA:=\scU \Phi+\scV\Psi$ of the chain complex $\cX_1\otimes \bF[\scV,\scV^{-1}\rrsquare \llsquare \scU\rrsquare $, where $\Phi$ and $\Psi$ denote the algebraically defined basepoint actions from Section~\ref{sec:background}. Then $\cA$ is null-homotopic on $\cX_1\otimes \bF[\scV,\scV^{-1}\rrsquare \llsquare \scU\rrsquare $ via an $\bF[\scV,\scV^{-1}\rrsquare \llsquare \scU\rrsquare $-equivariant chain homotopy. 
\end{cor}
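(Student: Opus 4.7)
The plan is to combine the classification from Lemma~\ref{lem:classification-PID} with a direct calculation of the algebraic basepoint actions on the model summands. Applying Lemma~\ref{lem:classification-PID}, I obtain a $\bF[\scV,\scV^{-1}]][[\scU]]$-equivariant chain isomorphism $F$ between $\cX_1 \otimes \bF[\scV,\scV^{-1}]][[\scU]]$ and a direct sum of 1-step complexes and 2-step complexes of the form $\d(\xs) = \ys \otimes U^i$, where $U = \scU \scV$.

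Before computing, I would first verify that $\Phi$ and $\Psi$ (and hence $\cA$) are well-defined up to $\bF[\scV,\scV^{-1}]][[\scU]]$-equivariant chain homotopy under such a change of basis. Concretely, if $\d' = F\d F^{-1}$ and $\Phi, \Phi'$ are obtained by differentiating $\d$ and $\d'$ with respect to $\scU$, then a direct computation using $\partial_\scU(F F^{-1}) = 0$ gives, in characteristic two,
\[
\Phi' \;=\; F \Phi F^{-1} + \bigl[\d',\, (\partial_\scU F) F^{-1}\bigr],
\]
so the two realizations of $\Phi$ differ by an equivariant boundary. The analogous identity holds for $\Psi$ with $\partial_\scV F$, so $\cA = \scU\Phi + \scV\Psi$ transforms as $F\cA F^{-1}$ up to the equivariant chain homotopy $\scU (\partial_\scU F) F^{-1} + \scV (\partial_\scV F) F^{-1}$.

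Next I compute $\cA$ on each summand of the simplified complex. On a $1$-step summand the differential is zero, so $\Phi = \Psi = 0$ and $\cA = 0$ trivially. On a 2-step summand with $\d(\xs) = \ys \otimes \scU^i \scV^i$, differentiating the single matrix entry yields
\[
\Phi(\xs) \;=\; i\, \scU^{i-1}\scV^i\, \ys, \qquad \Psi(\xs) \;=\; i\, \scU^{i}\scV^{i-1}\, \ys,
\]
with $\Phi(\ys) = \Psi(\ys) = 0$, so that
\[
\scU\Phi(\xs) + \scV\Psi(\xs) \;=\; 2i\, \scU^i \scV^i\, \ys \;=\; 0
\]
in characteristic two. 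Hence $\cA$ vanishes identically on the decomposed model.

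Transporting this vanishing back through $F$, together with the equivariant homotopy from step two, exhibits $\cA$ on the original complex as an equivariant boundary, which is the desired conclusion. I expect the only delicate point to be bookkeeping the equivariance of the homotopy, since the correction terms $(\partial_\scU F) F^{-1}$ and $(\partial_\scV F) F^{-1}$ involve derivatives of the change-of-basis matrix; but because $F$ is an isomorphism of $\bF[\scV,\scV^{-1}]][[\scU]]$-modules and every operation is performed inside this category, equivariance is automatic. The characteristic-two cancellation is the essential algebraic mechanism, and the finite-generation hypothesis is used only to apply the PID classification of Lemma~\ref{lem:classification-PID}.
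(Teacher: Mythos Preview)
Your proposal is correct and follows essentially the same approach as the paper: reduce via Lemma~\ref{lem:classification-PID} to 1-step and 2-step summands, then observe that $(\scU\partial_{\scU}+\scV\partial_{\scV})(U^i)=2iU^i=0$ in characteristic two. Your explicit verification of the transformation law $\Phi'=F\Phi F^{-1}+[\d',(\partial_{\scU}F)F^{-1}]$ is a bit more self-contained than the paper's terse ``$\Phi$ and $\Psi$ commute with homotopy equivalences,'' but the substance is the same.
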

\begin{proof} We recall that the endomorphisms $\Phi$ and $\Psi$ may be defined on any free chain complex over $\bF[\scV,\scV^{-1}\rrsquare \llsquare \scU\rrsquare$. The map $\Phi$ is the formal derivative of the differential with respect to $\scU$, and $\Psi$ is the formal derivative of the differential with respect to $\scV$. Moreover, if $C$ and $C'$ are two such free complexes and $F\colon C\to C'$ is an $\bF[\scV,\scV^{-1}\rrsquare \llsquare \scU\rrsquare$-equivariant chain map, then differentiating the equation
\[
\d \circ F+F\circ \d=0
\]
with respect to $\scU$ shows that $F$ homotopy commutes with $\Phi$. A similar argument works for $\Psi$. See \cite{ZemConnectedSums}*{Lemma~2.8} for arguments of a similar nature. 

Write $X$ for $\cX_1\otimes \bF[\scV,\scV^{-1}\rrsquare \llsquare \scU\rrsquare$. By the above observation, to show that $\scU\Phi +\scV\Psi$ is chain homotopic to zero on $X$, it suffices to find a free chain complex $C$ over $\bF[\scV,\scV^{-1}\rrsquare \llsquare \scU\rrsquare$ which is homotopy equivalent to $X$, and such that $\scU \Phi+\scV \Psi\equiv 0$ on $C$. We pick $C$ by performing a change of basis to $X$. Indeed using Lemma~\ref{lem:classification-PID}, we see that $X$ is chain isomorphic to a sum of 1-step complexes (i.e. a single generator $\xs$ with $\delta^1(\xs)=0$) and 2-step complexes spanned by generators $\xs$ and $\ys$ with differential of the form $\delta^1(\xs)=\ys\otimes U^i$. With respect to such a basis, we observe that $(\scU \d_{\scU}+\scV \d_{\scV})(U^i)=0,$ since $U=\scU\scV$.
\end{proof}

We are now in position to prove the main theorem of this section, Proposition~\ref{prop:merge=pair-of-pants}:

\begin{proof}[Proof of Proposition~\ref{prop:merge=pair-of-pants}] 
By Remark~\ref{rem:restate-type-D}, it is sufficient to show that the corresponding $h$ maps of the two type-$D$ modules are chain homotopic if we view the codomain (idempotent 1) as having coefficients in $\bF[\scV,\scV^{-1}\rrsquare \llsquare \scU \rrsquare$. We observe that by Equation~\eqref{eq:homology-action-basepoint} the difference between the two $h$ maps factors through the endomorphism $(\scU \Phi_1+\scV\Psi_1)\otimes \id$ of
\[
\big(\cX_{\Lambda_1}(L_1;K_1)\cdot \ve{I}_1\big)\otimes \big( \cX_{\Lambda_2}(L_2,K_2)\cdot \ve{I}_1\big)\otimes \bF[ \scV, \scV^{-1}\rrsquare \llsquare \scU\rrsquare.
\]
 Here, $\Phi_1$ and $\Psi_1$ denote the algebraic basepoint actions of $\cX_{\Lambda_1}(L_1;K_1)\cdot \ve{I}_1$.   Corollary~\ref{cor:eliminate-homology-action}, implies that the map $\scU \Phi_1+\scV \Psi_1$ is null-homotopic with these coefficients, so the proof is complete.
\end{proof}

\begin{rem} Our proof can also be used to show that  if $\cX^{\cK}$ is a type-$D$  Alexander  module which is homotopy equivalent to a finitely generated type-$D$ module, then there is a homotopy equivalence
\[
\cX^{\cK}\simeq \cX^{\cK}\hatbox {}_{\cK} \cT^{\cK}.
\]
In the above, ${}_{\cK} \cT^{\cK}$ is the transformer bimodule from \cite{ZemBordered}*{Section~14}. The above equation follows because the difference in the $h$-maps between $\cX^{\cK}$ and $\cX^{\cK}\hatbox{}_{\cK} \cT^{\cK}$ may also be factored through the map $\scU \Phi+\scV \Psi$. See the proof of \cite{ZemBordered}*{Theorem~14.1}.
\end{rem}

\section{Completing the proof of Theorem~\ref{thm:main}}
\label{sec:proof}

In this section, we complete our proof of Theorem~\ref{thm:main}. The organization is as follows. In Section~\ref{sec:HPL} we recall a version of the homological perturbation lemma for hypercubes. In Section~\ref{sec:relating-complexes}, we introduce a chain complex $\tilde{\cC}_{\Lambda}(G)$ and prove that there is a homotopy equivalence $\tilde{\cC}_{\Lambda}(G)\simeq \mathbb{CF}(G)$. The complex $\tilde{\cC}_{\Lambda}(G)$ is structurally similar to the link surgery complex for $L_G$. In Sections~\ref{sec:proof-b1=0} and ~\ref{sec:proof-b_1>0} we prove that $\tilde{\cC}_{\Lambda}(G)\simeq \cC_{\Lambda}(L_G)$, completing the proof of the equivalence between lattice homology and Heegaard Floer homology. Although the argument for $b_1(Y(G))>0$ also works when $b_1(Y(G))=0$, we have divided the proof into these two cases because the proof for $b_1(Y(G))=0$ is somewhat more natural and is useful in other situations (for example, it is necessary for the main results of \cite{BLZLatticeLink}).

Throughout this section we will assume that $G$ is connected. There is no loss of generality in doing so, since if $G=G_1\sqcup G_2$ then $Y(G)\iso Y(G_1)\# Y(G_2)$, and lattice homology and Heegaard Floer homology are tensorial under connected sum of 3-manifolds.

\subsection{Homological perturbation lemma for hypercubes}
\label{sec:HPL}
In this section, we review a version of the homological perturbation lemma for hypercubes. This lemma is similar to work of Huebschmann-Kadeishvili \cite{HK_Homological_Perturbation}. See  \cite{Liu2Bridge}*{Section~5.6} for a similar though slightly less explicit result for transferring hypercube structure maps along homotopy equivalences.

\begin{lem}[\cite{HHSZDuals}*{Lemma~2.10}]\label{lem:homological-perturbation-cubes}
Suppose that $\cC=(C_\veps,D_{\veps,\veps'})$ is a hypercube of chain complexes, and $(Z_\veps,\delta_\veps)_{\veps\in \bE_n}$ is a collection of chain complexes. Furthermore, suppose there are maps 
\[
\pi_{\veps}\colon C_\veps\to Z_\veps,\qquad i_{\veps}\colon Z_\veps\to C_\veps, \qquad h_\veps\colon C_\veps\to C_\veps,
\]
satisfying
\[
\pi_{\veps}\circ i_{\veps}=\id, \quad i_{\veps}\circ \pi_{\veps}=\id+[D_{\veps,\veps}, h_\veps], \quad  h_\veps\circ h_\veps=0,\]
\[
  \pi_{\veps}\circ h_{\veps}=0\quad \text{and} \quad h_{\veps}\circ i_{\veps}=0
\]
and such that $\pi_{\veps}$ and $i_{\veps}$ are chain maps.
With the above data chosen, there are canonical hypercube structure maps $\delta_{\veps,\veps'}\colon Z_{\veps}\to Z_{\veps'}$ so that $\cZ=(Z_\veps,\delta_{\veps,\veps'})$ is a hypercube of chain complexes, and also there are morphisms of hypercubes
\[
\Pi\colon \cC\to \cZ, \quad I\colon  \cZ\to \cC\quad \text{and}\quad H\colon \cC\to \cC
\]
such that
\[
\Pi\circ I=\id\quad \text{and} \quad I\circ \Pi= \id+\d_{\Mor}(H)
\]
and such that $I$ and $\Pi$ are chain maps.
\end{lem}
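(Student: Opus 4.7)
The plan is to prove this as an instance of the classical homological perturbation lemma (HPL), with the hypercube playing the role of a perturbed complex. Concretely, I would set $C=\bigoplus_{\veps\in \bE_n} C_\veps$, let $\partial=\bigoplus_\veps D_{\veps,\veps}$ be the sum of internal differentials, and let $D_{\mathrm{e}}=\sum_{\veps<\veps'} D_{\veps,\veps'}$ be the ``edge'' part of the hypercube. The hypercube relations \eqref{eq:hypercube-relations} are equivalent to $(\partial+D_{\mathrm{e}})^2=0$, and the data $(i,\pi,h):=(\bigoplus_\veps i_\veps,\bigoplus_\veps \pi_\veps,\bigoplus_\veps h_\veps)$ assembles into a strong deformation retract between $(C,\partial)$ and $Z:=\bigoplus_\veps Z_\veps$. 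Since $\bE_n$ is finite, $D_{\mathrm{e}}$ is strictly upper triangular with respect to the partial order on $\bE_n$, so $D_{\mathrm{e}}h$ is nilpotent and every HPL sum truncates automatically.

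Next, I would define the transferred structure by the standard zig-zag formulas. Set $A:=D_{\mathrm{e}}(1-hD_{\mathrm{e}})^{-1}=D_{\mathrm{e}}+D_{\mathrm{e}} hD_{\mathrm{e}}+D_{\mathrm{e}} hD_{\mathrm{e}} hD_{\mathrm{e}}+\cdots$, and take
\[
\delta:=\bigoplus_\veps \delta_\veps+\pi Ai,\qquad I:=i+hAi,\qquad \Pi:=\pi+\pi Ah,\qquad H:=h+hAh.
\]
On graded components this gives, for $\veps<\veps'$,
\[
\delta_{\veps,\veps'}=\sum_{k\ge 1}\,\sum_{\veps=\veps_0<\cdots<\veps_k=\veps'}\pi_{\veps_k}\circ D_{\veps_{k-1},\veps_k}\circ h_{\veps_{k-1}}\circ \cdots \circ h_{\veps_1}\circ D_{\veps_0,\veps_1}\circ i_{\veps_0},
\]
and analogous zig-zag sums for the off-diagonal components of $I,\Pi,H$, with diagonal entries $i_\veps,\pi_\veps,h_\veps$ respectively.

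The verification is then the classical HPL computation: I would check $(\partial+\pi Ai)^2=0$, $\Pi\circ I=\id$, and $I\circ \Pi-\id=[\partial+D_{\mathrm{e}},H]$ on the total complex by direct expansion, using repeatedly the side conditions $\pi i=\id$, $\pi h=hi=h^2=0$, and $i\pi=\id+[\partial,h]$ together with $(\partial+D_{\mathrm{e}})^2=0$. Extracting the graded components then produces the hypercube relations for $\delta$ and the asserted identities $\Pi\circ I=\id$ and $I\circ \Pi=\id+\d_{\Mor}(H)$ in the morphism category of hypercubes. The main obstacle I anticipate is purely combinatorial bookkeeping: one must confirm that the commutator $[\partial+D_{\mathrm{e}},H]$ on the total complex matches, after decomposition by $\veps$, the differential $\d_{\Mor}(H)$ on the hypercube morphism complex. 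This is a matter of unpacking definitions once the total-complex identities are in hand, so no conceptually new input beyond the classical HPL is required.
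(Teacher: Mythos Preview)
Your proposal is correct and is essentially the standard argument: the paper does not actually prove this lemma but cites \cite{HHSZDuals}*{Lemma~2.10} and simply records the explicit zig-zag formulas for $\delta_{\veps,\veps'}$, $I_{\veps,\veps'}$, and $\Pi_{\veps,\veps'}$, which coincide exactly with the ones your total-complex HPL produces. Your observation that $D_{\mathrm{e}}h$ is nilpotent (by finiteness of $\bE_n$) is the key point ensuring convergence, and the rest is the classical computation you outline.
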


The structure maps $\delta_{\veps,\veps'}$ and the morphisms $\Pi$ and $I$ have a concrete formula. We begin with $\delta_{\veps,\veps'}$. Suppose that $\veps<\veps'$ are points in $\bE_n$.   The hypercube structure maps $\delta_{\veps,\veps'}$ are given by the following formula:
\[
\delta_{\veps,\veps'}:=\sum_{\veps=\veps_1<\cdots<\veps_j=\veps'}  \pi_{\veps'}\circ D_{\veps_{j-1},\veps_j}\circ h_{\veps_{j-1}}\circ D_{\veps_{j-2},\veps_{j-1}}\circ \cdots \circ h_{\veps_2}\circ D_{\veps_1,\veps_2}\circ i_{\veps}.
\]
The component of the map $I$ sending coordinate $\veps$ to coordinate $\veps'$ is given via the formula
\[
I_{\veps,\veps'}:=\sum_{\veps=\veps_1<\cdots<\veps_j=\veps'}  h_{\veps_{j}}\circ D_{\veps_{j-1},\veps_{j}}\circ \cdots \circ h_{\veps_2}\circ D_{\veps_1,\veps_2}\circ i_{\veps}.
\]
Similarly, $\Pi$ is given by the formula
\[
\Pi_{\veps,\veps'}:=\sum_{\veps=\veps_1<\cdots<\veps_j=\veps'}  \pi_{\veps'}\circ D_{\veps_{j-1},\veps_j}\circ h_{\veps_{j-1}}\circ D_{\veps_{j-2},\veps_{j-1}}\circ \cdots\circ D_{\veps_1,\veps_2} \circ h_{\veps_1}.
\]

\subsection{Relating the lattice and link surgery complexes}
\label{sec:relating-complexes}

In this section, we introduce a complex $\tilde{\cC}_{\Lambda}(G)$ which is similar to the link surgery complex. We prove in Proposition~\ref{prop:tilde-C=lattice} that there is a homotopy equivalence
\[
\tilde{\cC}_{\Lambda}(G)\simeq \mathbb{CF}(G).
\]

We begin by considering forests of trees $G$ such that each component has a distinguished vertex, which we label as the root. We consider the following operations on such collections of rooted trees, from which any rooted tree may be obtained:
\begin{enumerate}[label= ($G$-\arabic*), ref=$G$-\arabic*]
\item\label{graph-1} Adding a valence 0 vertex (viewed as the root of its component).
\item\label{graph-2}  Joining two components together at their roots.
\item\label{graph-3} Adding a valence 1 vertex at the root of a component, and making the new vertex the new root.
\end{enumerate}

% We now extend the description of lattice homology in Proposition~\ref{prop:OSS-lattice} to a statement about bordered link surgery modules and the above operations on graphs.

 For connected $G$, we form a chain complex $\tilde{\cC}_{\Lambda}(G)$ as follows. We pick a vertex $v_0\in V(G)$ which we label as the root vertex. We define the chain complex $\tilde{\cC}_{\Lambda}(G)$ by iteratively tensoring the bordered modules and bimodules as follows, in parallel with the above topological moves:
\begin{enumerate}[label= ($M$-\arabic*), ref=$M$-\arabic*]
\item\label{module-1}
We begin with a copy of the type-$D$ module for a 0-framed solid torus $\cD_{0}^{\cK}$ for each valence 1 vertex of $G$ (other than $v_0$, if it has valence 1).
\item\label{module-2} We use the merge module ${}_{\cK| \cK} M^{\cK}$ to tensor the type-$D$ modules for two rooted trees to form the type-$D$ module for the tree obtained by joining the two components together at their roots.
\item\label{module-3} We tensor with the type-$DA$ module  of the Hopf link ${}_{\cK} \bar{\cH}_{(w(v),0)}^{\cK}$ to add a valence 1 vertex at the root of a tree. Here $w(v)$ denotes the weight of the vertex $v$. Recall that the bar indicates that we are using the alpha-beta bordered or beta-alpha bordered model of the link surgery formula for the Hopf link, as described in Section~\ref{sec:Hopf-link}.
\item\label{module-4} We tensor the type-$AA$ bimodule ${}_{\cK} [\cD_{w(v_0)}]_{\bF[U]}$ for the final root $v_0$.
\end{enumerate}

Note that since the bimodules are Alexander modules (in the sense of \cite{ZemBordered}*{Section~6}), the final type-$A$ action of $\bF[U]$ extends to an action of $\bF\llsquare U\rrsquare $ on the completion of $\tilde{\cC}_{\Lambda}(G)$.

%
%
%We note that the complex $\tilde{\cC}_{\Lambda}(G)$ may be described as the  complex obtained by using the connected sum operation for the hypercube maps described in Theorem~\ref{thm:pairing-main}.

\begin{rem} \emph{A-priori} $\tilde{\cC}_{\Lambda}(G)$ is not a valid model for the link surgery complex of $L_G$, though it is structurally similar. This is because in forming it, we used the bimodules ${}_{\cK|\cK} M^{\cK}$ when taking connected sums instead of the more complicated ${}_{\cK|\cK} W_l^{\cK}$ bimodules.
\end{rem}

  We prove the following:

\begin{prop}
\label{prop:tilde-C=lattice}
 The chain complexes $\tilde{\cC}_{\Lambda}(G)$ and $\bCF(G)$ are homotopy equivalent over $\bF\llsquare U\rrsquare$.
\end{prop}

\begin{proof}
We observe firstly that the link Floer complex of the Hopf link has the following filtration:
\[
\cCFL(H)=\big(\begin{tikzcd} \Span_{\bF[\scU_1,\scV_1,\scU_2,\scV_2]}(\ve{b},\ve{c})\ar[r, "\d"]& \Span_{\bF[\scU_1,\scV_1,\scU_2,\scV_2]}(\ve{a},\ve{d})
\end{tikzcd}\big).
\]
Here, $\ve{a}$, $\ve{b}$, $\ve{c}$ and $\ve{d}$ are the generators from Equation~\eqref{eq:Hopf-def}

Next, observe that $\tilde{\cC}_{\Lambda}(G)$ has a filtration by $\bE_\ell$ where $\ell=|V(G)|$, similar to $\bCF(G)$ and $\cC_{\Lambda}(L_G)$. That is, $\tilde{\cC}_{\Lambda}(G)$ is an $n$-dimensional hypercube of chain complexes.  Write $\tilde{\cC}_{\Lambda}(G)=(\tilde{\cC}_{\veps}, \tilde{D}_{\veps,\veps'})_{\veps\in \bE_n}$. 

For each $\veps$, consider the subcomplex $\tilde{\cC}_{\veps,\ve{s}}$ of $\tilde{\cC}_{\veps}$ which lies in a single Alexander grading $\ve{s}\in \bH(L_G)$.  The complexes $\cC_{\veps}$ are obtained by localizing a tensor product of  $(\ell-1)$-copies of the Hopf link complex $\cCFL(H)$ at some of the $\scV_i$ variables, and completing with respect to the $U_i$ and taking the direct product over Alexander gradings. In particular, $\tilde{\cC}_{\veps,\ve{s}}$ has an $\ell$-step filtration induced by the 2-step filtration on the link Floer complex of the Hopf link.  Write $\tilde{c}_{\veps,\ve{s}}$ for the chain complex $\tilde{\cC}_{\veps,\ve{s}}$ before completing with respect to the $U_i$ variables. The complex $\tilde{c}_{\veps,\ve{s}}$ may be written as the following exact sequence:
\begin{equation}
\tilde{c}_{\veps,\ve{s}}=\begin{tikzcd} 0\ar[r]&\cF^\ell_{\veps,\ve{s}}\ar[r]& \cdots \ar[r]& \cF^1_{\veps,\ve{s}}.
\end{tikzcd}
\label{eq:long-exact-sequence}
\end{equation}
We refer to the superscript $i$ in $\cF_{\veps,\ve{s}}^i$ as the \emph{Hopf filtration} level. In Equation~\eqref{eq:long-exact-sequence}, $\cF^i_{\veps,\ve{s}}$ is generated over $\bF[U_1,\dots, U_{\ell}]$ by elementary tensors in $S_{\veps}^{-1} \cdot \cCFL(L_G)$ which contain $i-1$ factors which are $\ve{b}$ or $\ve{c}$, and $\ell-i$ factors which are  $\ve{a}$ or $\ve{d}$, and which have Alexander grading $\ve{s}$. Recall that $S_{\veps}$ denotes the multiplicatively closed set generated by $\scV_i$ where $\veps_i=1$.

Index the link components so that the root vertex $v_0$ corresponds to the variable $U=U_1$. The homology group $H_*(\tilde{c}_{\veps,\ve{s}})$ is isomorphic to $\bF[U_1]$ by \cite{OSPlumbed}*{Lemma~2.6} (see also \cite{OSSLattice}*{Lemma~4.2}). Furthermore, the generator may be taken to be an elementary tensor which has only factors of $\ve{a}$ and $\ve{d}$, as well as some of the $\scU_i$ and $\scV_i$ variables. In particular, the generator is supported in $\cF_{\veps,\ve{s}}^1$. It follows that the chain complex $\tilde{c}_{\veps,\ve{s}}$ is a free resolution of its homology over $\bF[U_1,\dots, U_\ell]$.

 Since $H_*(\tilde{c}_{\veps,\ve{s}})\iso \bF[U_1]$ is a projective module over $\bF[U_1]$, it is a basic exercise in homological algebra to show that the above exact sequence may be split over $\bF[U_1]$. That is, we may decompose each $\cF_{\veps,\ve{s}}^{i}$ over $\bF[U_1]$ into a direct sum of $\bF[U_1]$-modules
 \[
 \cF_{\veps,\ve{s}}^i\iso L_{\veps,\ve{s}}^{i}\oplus R_{\veps,\ve{s}}^{i}
 \]
 so that $\d$ maps each $R^{i+1}_{\veps,\ve{s}}$ isomorphically onto $L^{i}_{\veps,\ve{s}}$, for $1<i<\ell$, and $\d$ vanishes on $L^{i}_{\veps,\ve{s}}$. Further, $L^{\ell}_{\veps,\ve{s}}=0$ and $R_{\veps,\ve{s}}^{1}$ projects isomorphically into $H_*(c_{\veps,\ve{s}})$. We may then define maps $\pi_{\veps,\ve{s}}$, $h_{\veps,\ve{s}}^{i}$ and $i_{\veps,\ve{s}}$ as in the following diagram
 \[
 \begin{tikzcd}[column sep=.4cm]
  R^{\ell}_{\veps,\ve{s}}
  	\ar[r, "\d",swap]
  &
  L^{\ell-1}_{\veps,\ve{s}}\oplus  R^{\ell-1}_{\veps,\ve{s}}
  \ar[r, "\d",swap]
  \ar[l, "h^{\ell-1}_{\veps}",bend right,swap, pos=.4]
  &
  \cdots
  \ar[l,bend right, swap,"h^{\ell-2}_{\veps,\ve{s}}", pos=.65]
  \ar[r,swap, "\d"]
  &
  L^{2}_{\veps,\ve{s}}
  \oplus
    R^{2}_{\veps,\ve{s}}
 \ar[r, "\d",swap]
 \ar[l,bend right,swap, "h^2_{\veps,\ve{s}}", pos=.3]
 &
 L^{1}_{\veps,\ve{s}}
 \oplus
 R^{1}_{\veps,\ve{s}}
  \ar[l,bend right,swap, "h^1_{\veps,\ve{s}}"]
  \ar[r,swap, "\pi_{\veps,\ve{s}}"]
 &H_*(\tilde{c}_{\veps,\ve{s}})
 	\ar[l, bend right, swap, "i_{\veps,\ve{s}}", pos=.6]
 \end{tikzcd}
 \]
The map $\pi_{\veps,\ve{s}}$ is the canonical projection map, and $i_{\veps,\ve{s}}$ is a section of the map $\pi_{\veps,\ve{s}}$, which we assume is compatible with the splitting described earlier. Similarly, 
\[
h_{\veps,\ve{s}}^i \colon L_{\veps,\ve{s}}^{i}\to R_{\veps,\ve{s}}^{i+1}
\] is the inverse of $\d|_{L_{\veps,\ve{s}}^{i+1}}$. Note that $i_{\veps,\ve{s}}$ and $h_{\veps,\ve{s}}^i$ are not generally $\bF[U_1,\dots, U_\ell]$-equivariant, and instead only $\bF[U_1]$-equivariant.

We may define $\bF\llsquare U_1\rrsquare $-equivariant maps
\[
\pi_{\veps}\colon \tilde{\cC}_{\veps}\to H_*(\tilde{\cC}_{\veps}),\quad  i_{\veps}\colon H_*(\tilde{\cC}_{\veps})\to \tilde{\cC}_{\veps}\quad \text{and} \quad  h_{\veps}\colon \tilde{\cC}_{\veps}\to \tilde{\cC}_{\veps}
\]
which give a homotopy equivalence over $\bF\llsquare U_1\rrsquare $ between $\tilde{\cC}_{\veps}$ and $H_*(\tilde{\cC}_{\veps})$, by taking the direct product of the maps $i_{\veps,\ve{s}}$, $\pi_{\veps,\ve{s}}$ and $h_{\veps,\ve{s}}=\sum_{i=1}^\ell h^i_{\veps,\ve{s}}$, and then completing with respect to the variables $U_1,\dots, U_\ell$. To see that the maps $i_{\veps,\ve{s}}$, $\pi_{\veps,\ve{s}}$ and $h_{\veps,\ve{s}}$ induce well-defined maps after completing with respect to $U_1,\dots, U_\ell$, we argue as follows. The completion over $U_1,\dots,U_\ell$ may be viewed as the completion with respect to the $I$-adic topology on $\bF[U_1,\dots, U_\ell]$, where $I$ is the ideal $(U_1,\dots, U_\ell)$. See \cite{AtiyahMacdonald}*{Chapter~10}. Equivalently, since there are only finitely many generators over $\bF[U_1,\dots, U_\ell]$ and each $U_i$ has Maslov grading $-2$, we may describe the completion as having a fundamental system of open sets given by $M_i=\Span\{x: \gr(x)\le i\}$ ranging over $i<0$.  The maps $i_{\veps,\ve{s}}$ and $h_{\veps,\ve{s}}$ are clearly continuous with respect to this topology, since they are homogeneously graded, and hence induce maps on the completion.

Applying the homological perturbation lemma for hypercubes, Lemma~\ref{lem:homological-perturbation-cubes}, we may transport the hypercube maps of $(\tilde{\cC}_{\veps}, \tilde{D}_{\veps,\veps'})$ to a hypercube with underlying groups $H_*(\tilde{\cC}_{\veps})$. This coincides with the underlying group of the lattice complex by Proposition~\ref{prop:OSS-lattice}. To see that the resulting hypercube is the lattice complex, it is sufficient to show that there are no higher length hypercube maps when we apply the homological perturbation lemma. This is seen directly, as follows. The hypercube maps induced by homological perturbation take the following form:
\[
\sum_{\veps=\veps_1<\cdots<\veps_j=\veps'}  \pi_{\veps'}\circ D_{\veps_{j-1},\veps_j}\circ h_{\veps_{j-1}}\circ D_{\veps_{j-2},\veps_{j-1}}\circ \cdots \circ h_{\veps_2}\circ D_{\veps_1,\veps_2}\circ i_{\veps}.
\]  Note that $h_{\veps_i}$ strictly increases the Hopf filtration level in the sequence in Equation~\eqref{eq:long-exact-sequence}, and $\pi_{\veps}$ is non-vanishing on only the lowest Hopf filtration level. Additionally, using Proposition~\ref{prop:Hopf-link-large-model} and the tensor product formula in Theorem~\ref{thm:pairing-main}, we see that the length 1 maps of $\tilde{\cC}_{\Lambda}(G)$ preserve the Hopf filtration level, while the higher length maps strictly increase the Hopf filtration level. In particular, the only way for a summand from Lemma~\ref{lem:homological-perturbation-cubes} to contribute is for there to be no $h_{\veps}$ factors, and for the hypercube arrow to be length 1. Hence, the transported hypercube maps are exactly the maps induced on homology by the length one maps of $\tilde{\cC}_{\Lambda}(G)$. This is exactly lattice homology by Proposition~\ref{prop:OSS-lattice}. The proof is complete.
\end{proof}

\begin{rem} The above argument fails at the last step if we use the $\cH$-models instead of the $\bar \cH$ models. (Recall that $\cH$ and $\bar{\cH}$ were introduced in Proposition~\ref{prop:Hopf-link-large-model}). This is because the length 2 map of $\cH$ has a term which decreases the Hopf filtration level, so could potentially contribute to higher length terms in the hypercube structure maps obtained from the homological perturbation lemma.
\end{rem}

\subsection{Completion of the proof when $b_1(Y(G))=0$}
\label{sec:proof-b1=0}

We begin with a basic topological lemma. Suppose that $G$ is a connected plumbing tree. Let $v$ be a vertex of $G$, and suppose that $v$ has valence $n>1$. Let us split $v$ into $n$ valence $1$ vertices, dividing $G$ into $n$ components $G_1,\dots, G_n$. See Figure~\ref{fig:5}. We let $Y_i$ denote the Dehn surgery on all weighted vertices of $G_i$ other than the one coming from $v$, so that $Y(G)$ is Dehn surgery on the knot $K_1\# \cdots\# K_n\subset Y_1\#\cdots \# Y_n$.

\begin{lem}
\label{lem:rational-homology-plumbings} Let $G$ be a plumbing tree with a vertex $v$ of valence $n>1$, and let $Y_1,\dots, Y_n$ be as above.  If $b_1(Y(G))=0$, then  $b_1(Y_1)+\cdots+b_1(Y_n)\le 1$. In particular, at most one $K_i$ is non-trivial in $H_1(Y_i)/\Tors$.
\end{lem}
\begin{proof}
We have $b_1(Y_1\# \cdots \# Y_n)=b_1(Y_1)+\cdots+b_1(Y_n)$. On the other hand, $Y(G)$ is obtained by performing Dehn surgery $K_1\#\cdots \# K_n\subset Y_1\#\cdots\# Y_n$ and $b_1(Y(G))=0$. Dehn surgery on a knot can reduce $b_1$ by at most one, so the claim follows.
\end{proof}

\begin{figure}[h]
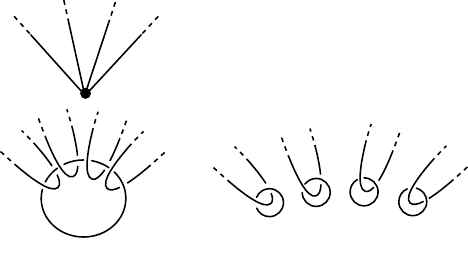
\caption{A vertex $v$ in a plumbing, and the knots $K_1$,\dots,$K_n$ from Lemma~\ref{lem:rational-homology-plumbings}. The manifold $Y_i$ is obtained by performing Dehn surgery on all vertices of $G_i$ except for the one shown.}
\label{fig:5}
\end{figure}

We now complete the the proof of the main theorem:

\begin{proof}[Proof of Theorem~\ref{thm:main} when $b_1(Y(G))=0$]
To construct $\cC_{\Lambda}(L_G)$, we  tensor bordered bimodules similar to ~\eqref{module-1}--\eqref{module-4}, as in our construction of $\tilde{\cC}_{\Lambda}(G)$. The only difference is that instead of the merge module $ M$, we must use the pair-of-pants modules $W_r$ or $W_l$. Note that we may construct $Y(G)$ by iteratively performing topological moves parallel to the algebraic modules~\eqref{module-1}--\eqref{module-4}. By Lemma~\ref{lem:rational-homology-plumbings}, when constructing plumbed 3-manifolds which are rational homology 3-spheres by iteratively taking connected sums and taking duals, we will never take the connected sum of two knots which are both homologically essential. Hence, we may apply Corollary~\ref{cor:simplify-tensor-product} to simplify the complexes of connected sums by choosing one of the two pair-of-pants modules ($W_l$ or $W_r$) so that the homology action term is applied to the tensor factor from the rationally null-homologous knot. This allows us to replace the pair-of-pants modules with the merge module without changing the homotopy type of the resulting type-$D$ module over $\cK$. In particular, we see that for some choice of arc system $\scA$ on $L_G$, we have
\[
\cC_{\Lambda}(L_G,\scA)_{\bF[U]}\simeq \tilde{\cC}_{\Lambda}(L_G)_{\bF[U]},
\]
so the main result follows from Proposition~\ref{prop:tilde-C=lattice}.

We now address the claim about the relative grading. We recall from \cite{MOIntegerSurgery}*{Section~9.3} that when $b_1(Y(G))=0$, the link surgery hypercube possesses a uniquely specified relative $\Z$-grading on each $\Spin^c$ structure. Furthermore, this relative grading clearly coincides with the relative grading on the lattice complex. Note that in our proof, we showed that the link surgery complex $\cC_{\Lambda}(L_G)$ coincided with the tensor product of the bordered modules in~\eqref{module-1}--\eqref{module-4} only up to homotopy equivalence. However in the case that $b_1(Y(G))=0$, the maps appearing in this homotopy equivalence only involve maps which already appear in the differential of the link surgery formula, or involve projections onto complexes in different $\veps$ and $\ve{s}\in \bH(L)$. In particular, the homotopy equivalence will be grading preserving.
\end{proof}

\subsection{Completion of the proof when $b_1(Y(G))>0$}

\label{sec:proof-b_1>0}

\begin{proof}[Proof of Theorem~\ref{thm:main} when $b_1(Y(G))>0$]
The proof is much the same as when $b_1(Y(G))=0$. The main difference is relating tensor products with the pair-of-pants bimodules with tensor products using the merge module. We can no longer use Corollary~\ref{cor:simplify-tensor-product} to eliminate the terms involving the homology action. This is because when $b_1(Y(G))>0$ we may, at an intermediate step, need to take the connected sum of two homologically essential knots (cf.  Lemma~\ref{lem:rational-homology-plumbings}). If at most one of the knots is homologically essential, then we can pick judiciously which model of $W_r$ or $W_l$ to use, so that the homology action term $\cA_{K_i}$ which appears is on the knot which is homologically trivial. If more than one of these knots is homologically essential, then this technique will not apply.

In the present case, we apply Propositions~\ref{prop:merge=pair-of-pants} and ~\ref{prop:finite-generation} instead of  Corollary~\ref{cor:simplify-tensor-product} when we need to take the connected sum of two components, as in \eqref{module-2}.  Propositions~\ref{prop:finite-generation} shows that relevant type-$D$ modules are finitely generated (up to homotopy equivalence), and Proposition~\ref{prop:merge=pair-of-pants} allows us to replace the modules ${}_{\cK|\cK}W_r^{\cK}$ in the tensor product with ${}_{\cK|\cK} M^{\cK}$. From here the proof is identical to the case when $b_1(Y(G))=0$. 
\end{proof}

\begin{rem} When $b_1(Y(G))>0$, the homotopy equivalence relating the tensor products obtained using the merge modules and the pair-of-pants bimodules is obtained using Corollary~\ref{cor:eliminate-homology-action}, which is fundamentally based on the classification theorem for finitely generated chain complexes over a PID. In particular, it is not as clear from this perspective how the homotopy equivalence interacts with the relative grading on the link surgery formula. We will pursue this question in a future work by considering group valued gradings on the bordered link surgery modules, in the spirit of \cite{LOTBordered}*{Section~3.3}.
\end{rem}

\section{Plumbed 3-manifolds with $b_1>0$}
\label{sec:plumbed-H1-action}
In this section, we state a refinement of N\'{e}methi's conjecture for plumbed manifolds with $b_1>0$. In analogy to the action of $H_1(S^3_\Lambda(L))/\Tors$ on the link surgery complex from Section~\ref{sec:algebraic-action-link-surgery}, we now describe an action of $H_1(Y(G))/\Tors$ on lattice homology. If $\mu_i$ denotes the meridian of the component $K_i\subset L_G$, we define
\[
\frA_{\mu_i}([K,E])=U^{a_{v_i}[K,E]} [K,E-v_i],
\]
extended equivariantly over $U$. (This is a term in the differential $\d$ on the lattice complex).
If $\g=a_1\cdot \mu_1+\cdots+ a_\ell \cdot\mu_\ell\in H_1(Y(G))/\Tors$, we define
\[
\frA_{\g}:=a_1\frA_{\mu_1}+\cdots+ a_\ell \frA_{\mu_\ell}.
\]
\begin{lem} The above endomorphism $\frA_{\g}$ of $\bCF(G)$ depends only on the class $\g\in H_1(Y(G))/\Tors$, up to chain  homotopy.
\end{lem}
The proof is essentially identical to the proof of Lemma~\ref{lem:algebraic-homology-action-link-surgery} for the link surgery formula. One replaces the lattice $\bH(L)$ with $\Char(X(G))$ and the proof proceeds by an easy translation.

\begin{conj} $\ve{\HF}^-(Y(G))$ is isomorphic to $\mathbb{HF}(G)$ as a module over $\bF\llsquare U\rrsquare \otimes \Lambda^* H_1(Y(G))/\Tors$.
\end{conj}

\bibliographystyle{custom}
\def\MR#1{}
\bibliography{biblio}

@article {Beilinson-Tensors,
    AUTHOR = {Beilinson, A.},
     TITLE = {Remarks on topological algebras},
   JOURNAL = {Mosc. Math. J.},
  FJOURNAL = {Moscow Mathematical Journal},
    VOLUME = {8},
      YEAR = {2008},
    NUMBER = {1},
     PAGES = {1--20, 183},
      ISSN = {1609-3321},
   MRCLASS = {17B69 (14F10)},
  MRNUMBER = {2422264},
}

@article{Positelski-Linear,
	title={Exact categories ot topological vector spaces with linear topology},
	author={Leonid Positelski},
	note={e-print, \url{arXiv:2012.15431} [math.CT]},
	year={2020},
}

@incollection{GrothendieckNuclear,
    AUTHOR = {Grothendieck, A.},
     TITLE = {Produits tensoriels topologiques et espaces nucl\'{e}aires},
 BOOKTITLE = {S\'{e}minaire {B}ourbaki, {V}ol. 2},
     PAGES = {Exp. No. 69, 193--200},
 PUBLISHER = {Soc. Math. France, Paris},
      YEAR = {1954},
}

@article {NemethiAR,
    AUTHOR = {N\'{e}methi, Andr\'{a}s},
     TITLE = {On the {O}zsv\'{a}th-{S}zab\'{o} invariant of negative definite
              plumbed 3-manifolds},
   JOURNAL = {Geom. Topol.},
  FJOURNAL = {Geometry and Topology},
    VOLUME = {9},
      YEAR = {2005},
     PAGES = {991--1042},
      ISSN = {1465-3060},
   MRCLASS = {57M27 (14E15 32S45 57R58)},
  MRNUMBER = {2140997},
MRREVIEWER = {Stanislav Jabuka},
       DOI = {10.2140/gt.2005.9.991},
       URL = {https://doi-org.prx.library.gatech.edu/10.2140/gt.2005.9.991},
}

@article{HMInvolutive,
	title = {Involutive {H}eegaard {F}loer homology},
	author = {Hendricks, Kristen and Manolescu, Ciprian},
	journal = {Duke Math. J.},
    volume = {166},
    number = {7},
    pages = {1211-1299},
	year = {2017}
}

@article {HMZConnectedSum,
    AUTHOR = {Hendricks, Kristen and Manolescu, Ciprian and Zemke, Ian},
     TITLE = {A connected sum formula for involutive {H}eegaard {F}loer
              homology},
   JOURNAL = {Selecta Math. (N.S.)},
  FJOURNAL = {Selecta Mathematica. New Series},
    VOLUME = {24},
      YEAR = {2018},
    NUMBER = {2},
     PAGES = {1183--1245},
      ISSN = {1022-1824},
   MRCLASS = {57M27 (57R58)},
  MRNUMBER = {3782421},
       DOI = {10.1007/s00029-017-0332-8},
}

@UNPUBLISHED{MOIntegerSurgery,
	title = {Heegaard {F}loer homology and integer
	 surgeries on links},
	author = {Manolescu, Ciprian and Ozsv{\'a}th, Peter S.},
	note = {e-print, \url{arXiv:1011.1317}},
	year = {2010},
}

@ARTICLE{OSProperties,
author="Ozsv{\'a}th, Peter S. and Szab{\'o}, Zolt{\'a}n",
     TITLE = {Holomorphic disks and three-manifold invariants: properties
              and applications},
   JOURNAL = {Ann. of Math. (2)},
  FJOURNAL = {Annals of Mathematics. Second Series},
    VOLUME = {159},
      YEAR = {2004},
    NUMBER = {3},
         PAGES = {1159--1245},
}

@ARTICLE{OSDisks,
title="Holomorphic disks and topological invariants for closed
three-manifolds",
author="Ozsv\'ath, Peter and Szab\'o, Zolt\'an",
   JOURNAL = {Ann. of Math. (2)},
  FJOURNAL = {Annals of Mathematics. Second Series},
    VOLUME = {159},
      YEAR = {2004},
    NUMBER = {3},
     PAGES = {1027--1158}
}

@article {OSKnots,
    AUTHOR = {Ozsv\'ath, Peter and Szab\'o, Zolt\'an},
     TITLE = {Holomorphic disks and knot invariants},
   JOURNAL = {Adv. Math.},
  FJOURNAL = {Advances in Mathematics},
    VOLUME = {186},
      YEAR = {2004},
    NUMBER = {1},
     PAGES = {58--116},
  }

@article {OSLinks,
    AUTHOR = {Ozsv\'ath, Peter and Szab\'o, Zolt\'an},
     TITLE = {Holomorphic disks, link invariants and the multi-variable
              {A}lexander polynomial},
   JOURNAL = {Algebr. Geom. Topol.},
  FJOURNAL = {Algebraic \& Geometric Topology},
    VOLUME = {8},
      YEAR = {2008},
    NUMBER = {2},
     PAGES = {615--692},
 }

@article {SarkarMovingBasepoints,
    AUTHOR = {Sarkar, Sucharit},
     TITLE = {Moving basepoints and the induced automorphisms of link
              {F}loer homology},
   JOURNAL = {Algebr. Geom. Topol.},
  FJOURNAL = {Algebraic \& Geometric Topology},
    VOLUME = {15},
      YEAR = {2015},
    NUMBER = {5},
     PAGES = {2479--2515},
}

@article {SarkarMaslov,
    AUTHOR = {Sarkar, Sucharit},
     TITLE = {Maslov index formulas for {W}hitney {$n$}-gons},
   JOURNAL = {J. Symplectic Geom.},
  FJOURNAL = {The Journal of Symplectic Geometry},
    VOLUME = {9},
      YEAR = {2011},
    NUMBER = {2},
     PAGES = {251--270},
}

@PHDTHESIS{RasmussenKnots,
author="Rasmussen, Jacob",
title="{F}loer homology and knot complements",
school="Harvard University",
year=2003,
note= "\url{arXiv:math/0306378}"}

@UNPUBLISHED{ZemGraphTQFT,
title={Graph cobordisms and {H}eegaard {F}loer homology},
author={Zemke, Ian},
note={e-print, \url{arXiv:1512.01184}},
year={2015},
}

@article {ZemQuasi,
    AUTHOR = {Zemke, Ian},
     TITLE = {Quasistabilization and basepoint moving maps in link {F}loer
              homology},
   JOURNAL = {Algebr. Geom. Topol.},
  FJOURNAL = {Algebraic \& Geometric Topology},
    VOLUME = {17},
      YEAR = {2017},
    NUMBER = {6},
     PAGES = {3461--3518},
      ISSN = {1472-2747},
   MRCLASS = {57M25 (57M27 57R58)},
  MRNUMBER = {3709653},
MRREVIEWER = {Jianfeng Lin},
       DOI = {10.2140/agt.2017.17.3461},
       URL = {https://doi.org/10.2140/agt.2017.17.3461},
}

@article {ZemCFLTQFT,
    AUTHOR = {Zemke, Ian},
     TITLE = {Link cobordisms and functoriality in link {F}loer homology},
   JOURNAL = {J. Topol.},
  FJOURNAL = {Journal of Topology},
    VOLUME = {12},
      YEAR = {2019},
    NUMBER = {1},
     PAGES = {94--220},
      ISSN = {1753-8416},
   MRCLASS = {57M27 (57M25 57R56)},
  MRNUMBER = {3905679},
       DOI = {10.1112/topo.12085},
       URL = {https://doi.org/10.1112/topo.12085},
}

@article {ZemConnectedSums,
    AUTHOR = {Zemke, Ian},
     TITLE = {Connected sums and involutive knot {F}loer homology},
   JOURNAL = {Proc. Lond. Math. Soc. (3)},
  FJOURNAL = {Proceedings of the London Mathematical Society. Third Series},
    VOLUME = {119},
      YEAR = {2019},
    NUMBER = {1},
     PAGES = {214--265},
      ISSN = {0024-6115},
   MRCLASS = {57M27 (57R56)},
  MRNUMBER = {3957835},
MRREVIEWER = {Sergiy Koshkin},
       DOI = {10.1112/plms.12227},
       URL = {https://doi.org/10.1112/plms.12227},
}

@article{OSRationalSurgeries,
author = {Ozsv\'ath, Peter and Szab\'o, Zolt\'an},
year = {2011},
pages = {1-68},
title = {Knot {F}loer homology and rational surgeries},
volume = {11},
number={1},
journal = {Algebr. Geom. Topol.}
}

@article {OSIntegerSurgeries,
    AUTHOR = {Ozsv\'{a}th, Peter S. and Szab\'{o}, Zolt\'{a}n},
     TITLE = {Knot {F}loer homology and integer surgeries},
   JOURNAL = {Algebr. Geom. Topol.},
  FJOURNAL = {Algebraic \& Geometric Topology},
    VOLUME = {8},
      YEAR = {2008},
    NUMBER = {1},
     PAGES = {101--153},
      ISSN = {1472-2747},
   MRCLASS = {57R58 (57M27)},
  MRNUMBER = {2377279},
       DOI = {10.2140/agt.2008.8.101},
       URL = {https://doi.org/10.2140/agt.2008.8.101},
}

@article {LOTBordered,
    AUTHOR = {Lipshitz, Robert and Ozsvath, Peter S. and Thurston, Dylan P.},
     TITLE = {Bordered {H}eegaard {F}loer homology},
   JOURNAL = {Mem. Amer. Math. Soc.},
  FJOURNAL = {Memoirs of the American Mathematical Society},
    VOLUME = {254},
      YEAR = {2018},
    NUMBER = {1216},
     PAGES = {viii+279},
      ISSN = {0065-9266},
      ISBN = {978-1-4704-2888-4; 978-1-4704-4748-9},
   MRCLASS = {57R58 (53D40 57M27 57R57)},
  MRNUMBER = {3827056},
MRREVIEWER = {Paolo Ghiggini},
       DOI = {10.1090/memo/1216},
       URL = {https://doi.org/10.1090/memo/1216},
}

@unpublished{HeddenLevineSurgery,
 title = {A surgery formula for knot {F}loer homology},
 author = {Hedden, Matthew and Levine, Adam Simon},
 year={2019},
 note = {e-print, \url{arXiv:1901.02488}},
 }

@article{dai-manolescu,
	title={Involutive {H}eegaard {F}loer homology and plumbed three-manifolds},
	author={Dai, Irving and Manolescu, Ciprian},
	journal={Journal of the Institute of Mathematics of Jussieu},
	volume={18},
	number={6},
	pages={1115--1155},
	year={2019},
	publisher={Cambridge University Press}
}

@article {OSSLattice,
    AUTHOR = {Ozsv\'{a}th, Peter and Stipsicz, Andr\'{a}s I. and Szab\'{o}, Zolt\'{a}n},
     TITLE = {A spectral sequence on lattice homology},
   JOURNAL = {Quantum Topol.},
  FJOURNAL = {Quantum Topology},
    VOLUME = {5},
      YEAR = {2014},
    NUMBER = {4},
     PAGES = {487--521},
      ISSN = {1663-487X},
   MRCLASS = {57R58 (57M27)},
  MRNUMBER = {3317341},
MRREVIEWER = {Kim A. Fr\o yshov},
       DOI = {10.4171/QT/56},
       URL = {https://doi.org/10.4171/QT/56},
}

@unpublished{Liu2Bridge,
 title = {{H}EEGAARD {F}LOER HOMOLOGY OF SURGERIES ON TWO-BRIDGE LINKS},
 author = {Liu, Yajing},
 year={2014},
 note = {e-print, \url{arXiv:1402.5727}},
 }

@unpublished{HHSZNaturality,
 title = {Naturality and functoriality in involutive {H}eegaard {F}loer homology},
 author = {Hendricks, Kristen and Hom, Jennifer and Stoffregen, Matthew and Zemke, Ian},
 year={2021},
 note =  {e-print, \url{2201.12906}},
 }

@unpublished{HHSZDuals,
 title = {A dual knot mapping cone formula for involutive {H}eegaard {F}loer homology},
 author = {Hendricks, Kristen and Hom, Jennifer and Stoffregen, Matthew and Zemke, Ian},
 year={2022},
 note = {e-print, \url{2205.12798}},
 }

@article {OSPlumbed,
    AUTHOR = {Ozsv\'{a}th, Peter and Szab\'{o}, Zolt\'{a}n},
     TITLE = {On the {F}loer homology of plumbed three-manifolds},
   JOURNAL = {Geom. Topol.},
  FJOURNAL = {Geometry and Topology},
    VOLUME = {7},
      YEAR = {2003},
     PAGES = {185--224},
      ISSN = {1465-3060},
   MRCLASS = {57R58 (53D40 57M27)},
  MRNUMBER = {1988284},
MRREVIEWER = {Jacob Andrew Rasmussen},
       DOI = {10.2140/gt.2003.7.185},
       URL = {https://doi.org/10.2140/gt.2003.7.185},
}

@article {NemethiLattice,
    AUTHOR = {N\'{e}methi, Andr\'{a}s},
     TITLE = {Lattice cohomology of normal surface singularities},
   JOURNAL = {Publ. Res. Inst. Math. Sci.},
  FJOURNAL = {Kyoto University. Research Institute for Mathematical
              Sciences. Publications},
    VOLUME = {44},
      YEAR = {2008},
    NUMBER = {2},
     PAGES = {507--543},
      ISSN = {0034-5318},
   MRCLASS = {32S25 (14B05 14J17 32S45 57M27)},
  MRNUMBER = {2426357},
MRREVIEWER = {Tomohiro Okuma},
       DOI = {10.2977/prims/1210167336},
       URL = {https://doi.org/10.2977/prims/1210167336},
}

@article {OSSKnotLatticeHomology,
    AUTHOR = {Ozsv\'{a}th, Peter and Stipsicz, Andr\'{a}s I. and Szab\'{o}, Zolt\'{a}n},
     TITLE = {Knots in lattice homology},
   JOURNAL = {Comment. Math. Helv.},
  FJOURNAL = {Commentarii Mathematici Helvetici. A Journal of the Swiss
              Mathematical Society},
    VOLUME = {89},
      YEAR = {2014},
    NUMBER = {4},
     PAGES = {783--818},
      ISSN = {0010-2571},
   MRCLASS = {57R58 (57M15 57M25 57M27)},
  MRNUMBER = {3284295},
MRREVIEWER = {Torsten Asselmeyer-Maluga},
       DOI = {10.4171/CMH/334},
       URL = {https://doi.org/10.4171/CMH/334},
}

@unpublished{EftekharyDuals,
 title = {{H}EEGAARD {F}LOER homology and {M}orse surgery},
 author = {Eftekhary, Eaman},
 year={2006},
 note = {e-print, \url{arXiv:0603171}},
 }

@unpublished{ZemBordered,
 title = {Bordered manifolds with torus boundary and the link surgery formula},
 author = {Zemke, Ian},
 year={2021},
 note = {e-print, \url{arXiv:2109.11520}},
 }

@unpublished{ZemGenSurgery,
 title = {A general {H}eegaard {F}loer surgery formula},
 author = {Zemke, Ian},
 year={2023},
 note = {e-print, \url{arXiv:2308.15658}},
 }

@article {NemethiRational,
    AUTHOR = {N\'{e}methi, Andr\'{a}s},
     TITLE = {Links of rational singularities, {L}-spaces and {LO}
              fundamental groups},
   JOURNAL = {Invent. Math.},
  FJOURNAL = {Inventiones Mathematicae},
    VOLUME = {210},
      YEAR = {2017},
    NUMBER = {1},
     PAGES = {69--83},
      ISSN = {0020-9910},
   MRCLASS = {32S05 (32C35 32S25 32S45 32S50 57M27 57R57)},
  MRNUMBER = {3698339},
MRREVIEWER = {Daniel Matei},
       DOI = {10.1007/s00222-017-0724-6},
       URL = {https://doi.org/10.1007/s00222-017-0724-6},
}

@book {AtiyahMacdonald,
    AUTHOR = {Atiyah, M. F. and Mac{D}onald, I. G.},
     TITLE = {Introduction to commutative algebra},
 PUBLISHER = {Addison-Wesley Publishing Co., Reading, Mass.-London-Don
              Mills, Ont.},
      YEAR = {1969},
     PAGES = {ix+128},
   MRCLASS = {13.00},
  MRNUMBER = {0242802},
MRREVIEWER = {J. A. Johnson},
}

@article {HK_Homological_Perturbation,
     AUTHOR = {Huebschmann, Johannes and Kadeishvili, Tornike},
      TITLE = {Small models for chain algebras},
    JOURNAL = {Math. Z.},
   FJOURNAL = {Mathematische Zeitschrift},
     VOLUME = {207},
       YEAR = {1991},
     NUMBER = {2},
      PAGES = {245--280},
       ISSN = {0025-5874},
    MRCLASS = {55U15},
   MRNUMBER = {1109665},
        DOI = {10.1007/BF02571387},
        URL = {https://doi.org/10.1007/BF02571387},
 }

@unpublished{BLZLatticeLink,
  	title={Lattice homology, formality, and plumbed L-space links},
  	author={Borodzik, Maciej and Liu, Beibei and Zemke, Ian},
  	note={e-print, \url{arXiv:2210.15792}. Accepted \emph{JEMS}},
  	year={2022},
  }

@article {Artin-Rational-1,
    AUTHOR = {Artin, Michael},
     TITLE = {Some numerical criteria for contractability of curves on
              algebraic surfaces},
   JOURNAL = {Amer. J. Math.},
  FJOURNAL = {American Journal of Mathematics},
    VOLUME = {84},
      YEAR = {1962},
     PAGES = {485--496},
      ISSN = {0002-9327,1080-6377},
   MRCLASS = {14.18 (14.20)},
  MRNUMBER = {146182},
MRREVIEWER = {M.\ Rosenlicht},
       DOI = {10.2307/2372985},
       URL = {https://doi.org/10.2307/2372985},
}

@article {Artin-Rational-2,
    AUTHOR = {Artin, Michael},
     TITLE = {On isolated rational singularities of surfaces},
   JOURNAL = {Amer. J. Math.},
  FJOURNAL = {American Journal of Mathematics},
    VOLUME = {88},
      YEAR = {1966},
     PAGES = {129--136},
      ISSN = {0002-9327,1080-6377},
   MRCLASS = {14.18 (14.20)},
  MRNUMBER = {199191},
MRREVIEWER = {P.\ Du Val},
       DOI = {10.2307/2373050},
       URL = {https://doi.org/10.2307/2373050},
}

\end{document}